\newcommand{\ie}{{\em i.e.}\ }
\newtheorem{theorem}{Theorem}[section]
\newtheorem{lemma}[theorem]{Lemma}
\newtheorem{definition}[theorem]{Definition}
\newtheorem{proposition}[theorem]{Proposition}
\newtheorem{corollary}[theorem]{Corollary}
\newtheorem{example}[theorem]{Example}
\newtheorem{remark}[theorem]{Remark}
\newcommand{\reminder}[1]{}
\newcommand{\opname}[1]{\operatorname{\mathsf{#1}}}
\newcommand{\ind}{\opname{ind}\nolimits}
\newcommand{\op}{^{op}}
\newcommand{\lcm}{\mathrm{lcm}}
\newcommand{\Ga}{\Gamma}
\newcommand{\trdeg}{\mathrm{trdeg}}
\newcommand{\Ext}{\opname{Ext}}
\newcommand{\ET}{\opname{ET}}
\newcommand{\Z}{\opname{Z}}
\newcommand{\I}{\opname{I}}
\newcommand{\Lt}{\opname{Lt}}
\newcommand{\Lm}{\opname{Lm}}
\newcommand{\Aut}{\opname{Aut}}
\newcommand{\ca}{{\mathcal A}}
\renewcommand{\tilde}[1]{\widetilde{#1}}
\begin{document}

\title{Generalized frieze varieties and Gr\"obner bases}
\author{Siyang Liu}
\address{School of Mathematics, Hangzhou Normal university,  Hangzhou, China}
\email{siyangliu@hznu.edu.cn}

\begin{abstract}
	We study  properties of generalized frieze varieties for quivers associated to cluster automorphisms. Special cases include acyclic quivers with Coxeter automorphisms and quivers with Cluster DT automorphisms. We prove that the generalized frieze variety $X$ of  an affine quiver with the Coxeter automorphism  is either a finite set of points or a union of finitely many rational  curves.  In particular, if $\dim X=1$, the genus of each irreducible component of $X$  is zero. We also propose an algorithm to obtain the defining polynomials for each irreducible component of the generalized frieze variety  $X$ of an affine quiver. Furthermore, we give the Gr\"obner basis with respect to a given monomial order for each irreducible component  of frieze varieties of affine quivers with  given orientations,  and show that each component is a smooth rational curve.
\end{abstract}

\keywords{Generalized frieze variety, Cluster automorphism, Affine quiver, Rational curve, Gr{\"o}bner basis.}

\subjclass[2010]{13F60,16G20,13P10}

\maketitle

\tableofcontents

\section{Introduction}	
The {\em frieze pattern} associated to an acyclic quiver $Q$ is a sequence of  positive rational  vectors  \[\mathbf a_t =  (a_{i,t}, i\in Q_0),\quad t\in \mathbb N,\]
such that  these vectors are uniquely determined by the initial conditions $a_{i,0} = 1$ for $i\in Q_0$ together with the  relations
\[       a_{i,t+1} {a_{i,t}}  =\prod_{j\rightarrow i} a_{j,t}\cdot\prod_{i\rightarrow k}a_{k,t+1} +1.       \]

The frieze patterns for quivers of type $\mathbb A$ are precisely  the  Convey-Coxeter's friezes \cite{convey1973frieze}.  Caldero and Chapoton found a connection between cluster algebras and friezes  in \cite{caldero2006hall} which inspired more authors to study friezes via cluster theory. More specifically, the frieze pattern can be obtained from a sequence of clusters, which are  given via applying the so-called  Coxeter mutation to the initial cluster  repeatedly,  by specializing 
 initial cluster variables to one.  Here the {\em Coxeter mutation associated to an acyclic quiver}  is the composition of the mutation sequence given by an admissible sequence of sinks of the quiver.

For each vertex $i\in Q_0$, the sequence $(a_{i,t})_{t\in \mathbb N}$ is called a {\em frieze sequence associated to the vertex $i$ of $Q$}.  Assem, Reutenauer and Smith conjectured the frieze sequences associated to an acyclic quiver $Q$ satisfy linear recurrences if and only if $Q$ is a  Dynkin  quiver or an  affine  quiver, and they proved the "only if" part is always true and the converse is true when $Q$ is  of affine types $\tilde{\mathbb A}$ and $\tilde{\mathbb D}$ in \cite{assem2010frieze}. The "if" part of the conjecture was proved in full generality by Keller  and  Scherotzke in \cite{keller2011linear} using (generalized) exchange triangles in cluster categories.  
Their work provides a new perspective for an acyclic quiver to be finite, tame or wild in terms of rationality of frieze sequences.

Another characterization of the  finite-tame-wild trichotomy for acyclic quivers via friezes was obtained by Lee, Li,  Mills, Schiﬄer and Seceleanu in \cite{lee2020frieze}. More precisely,  they viewed  the frieze pattern  associated to each acyclic quiver $Q$ as a subset of  the affine space $\mathbb C^n$ and  defined the {\em frieze variety} $X(Q)$ to be the  Zariski closure of the set of vectors in the corresponding frieze pattern, and they showed that $\dim X(Q) = 0$  when $Q$ is of Dynkin type, $\dim X(Q) = 1$ when $Q$ is of affine type, and  $\dim X(Q) >1$ when $Q$ is of wild type.  It turns out to be interesting to study   properties of frieze varieties, and there are also some questions on frieze varieties to be answered.

Our main motivation in this paper is to answer a conjecture and some problems  proposed  by  Lee, Li,  Mills, Schiﬄer and Seceleanu in \cite{lee2020frieze}  which are listed as follows.
\begin{itemize}[itemsep=2pt,topsep=0pt,parsep=0pt]
	\item Problem:  Study  the friezes obtained by changing the initial conditions.
	\item Problem:  How to define the frieze variety for any quiver which is not necessarily acyclic but mutation equivalent to an acyclic quiver and study how does it behave under mutation.
   \item Conjecture:  In the affine case, each irreducible component of the frieze variety is a smooth curve of genus zero.
\end{itemize}

For the first problem, Igusa and Schiffler defined the  {\em generalized frieze variety} by letting the initial condition be more general (see Definition  \ref{defgfv}), and they also proved the  Coxeter mutation cyclically permutes irreducible components of a generalized frieze variety (see Lemma \ref{isinvariant}) in \cite{igusa2021frieze}.  

For the second problem,  recall that  the frieze pattern is obtained by applying the Coxeter  mutation repeatedly to the initial cluster, while  the Coxeter mutation arises from an admissible sequence of sinks which forms a reddening sequence (indeed a maximal green sequence) of the given acyclic quiver.  
 For reddening sequences and their relation with DT invariants,  we refer to  \cite{keller2011dilog,keller2017dt}. Since reddening sequences preserve quivers, each reddening sequence induces a cluster automorphism which is called a {\em cluster DT automorphism}\footnote{ We call it cluster DT automorphism because the isomorphism on  cluster varieties induced by the cluster automorphism on upper cluster algebras is  called the  cluster DT transformation and studied by Goncharov and Shen in \cite{goncharov2018dt}.}.  In particular,  we call the cluster DT automorphism induced by the Coxeter mutation  a {\em Coxeter automorphism}.  In conclusion,  we have the following relations.
\[ \{ \text{Coxeter automorphisms}\}\subset \{\text{Cluster DT automorphisms}\}\subset \{\text{Cluster automorphisms}\}.        \]

In this paper, we define the generalized frieze variety for any quiver $Q$ by replacing the Coxeter mutation with a cluster automorphism, see Definition \ref{defgfv}. This definition naturally  extends the definition of (generalized) frieze varieties considered in \cite{lee2020frieze,igusa2021frieze}, and we then study their properties, such as their irreducibility and they behave under mutations and folding theory.

The group of cluster automorphisms is  called a {\em cluster automorphism group} in \cite{assem2012automorphism}.  By [Theorem 3.8, 3.11, \cite{assem2012automorphism}],  the  cluster automorphism groups for acyclic quivers depend largely on the Coxeter automorphisms.  See the [Table 1,  \cite{assem2012automorphism}]  for a full list of the cluster  automorphism groups of affine quivers.  In this paper,  we  are mainly concerned about generalized frieze varieties associated to the Coxeter automorphism for acyclic quivers.   In particular, for affine quivers, we obtain the following results.

\begin{theorem}
Let $Q$ be an affine quiver and $f_c$ the Coxeter automorphism.  Let $X(Q,f_c,\mathbf a)$ denote  the generalized frieze variety for a general specialization $\mathbf a\in (\mathbb C^*)^n$. Then  $X(Q,f_c,\mathbf a)$ is  either  a   finite set or  a union of  finitely many rational curves.
In particular, if $X(Q,f_c,\mathbf a)$ has dimension one,  the genus of each irreducible component is zero.
\end{theorem}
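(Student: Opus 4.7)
The generalized frieze variety $X = X(Q,f_c,\mathbf{a})$ is, by definition, the Zariski closure in $\mathbb{C}^n$ of the forward $f_c$-orbit of $\mathbf{a}$. My strategy is to parametrize (pieces of) this orbit by polynomial maps $\mathbb{A}^1 \to \mathbb{A}^n$ and then to conclude by L\"uroth's theorem that each irreducible component is a point or a rational curve of genus zero.

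First I would use Lemma \ref{isinvariant}: since $f_c$ cyclically permutes the irreducible components of $X$, the variety has finitely many components, and if $N$ denotes this number then $\phi := f_c^N$ preserves each component; thus it suffices to analyze the Zariski closure of a single $\phi$-orbit. Next I would invoke the linear recurrence theorem of Keller-Scherotzke \cite{keller2011linear}: for each vertex $i$, the sequence $t \mapsto a_{i,t}$ satisfies a linear recurrence with constant coefficients. Their argument is categorical, proceeding via (generalized) exchange triangles in the cluster category $\mathcal{C}_Q$, and the coefficients of the recurrence are intrinsic to $\mathcal{C}_Q$; accordingly the same recurrence persists after replacing the initial values $\mathbf{1}$ by an arbitrary $\mathbf{a} \in (\mathbb{C}^*)^n$, yielding a linear recurrence for the generalized frieze sequence as well.

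For an affine quiver the characteristic polynomial of this recurrence has roots that are roots of unity, with $1$ appearing as a double root coming from the imaginary direction of the root lattice. Consequently there exist an integer $M$ and polynomials $P_{i,r}(t) \in \mathbb{C}[t]$ (depending polynomially on $\mathbf{a}$) with $a_{i,\,Mt+r} = P_{i,r}(t)$ for every residue $r \bmod M$. Hence each arithmetic progression of the orbit is parametrized by a polynomial morphism $\mathbb{A}^1 \to \mathbb{A}^n$, $t \mapsto (P_{i,r}(t))_i$, whose Zariski closure is either a point (all $P_{i,r}$ constant) or an irreducible affine curve. In the latter case the function field of the curve embeds in $\mathbb{C}(t)$, so by L\"uroth it is itself purely transcendental: the curve is rational and its smooth projective model is $\mathbb{P}^1$ of genus zero. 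Assembling the finitely many progressions, and then the $N$ Coxeter-translates, produces $X$ as a finite union of points and rational curves of genus zero, which is the claim.

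\textbf{Main obstacle.} The principal technical point is to make rigorous the transfer of the linear recurrence from the classical frieze setting ($\mathbf{a} = \mathbf{1}$) to general initial data, together with the claim that the characteristic polynomial has only roots of unity (with $1$ of multiplicity at most two). Both facts live at the level of the Coxeter action on the Grothendieck group of $\mathcal{C}_Q$, so the proof should reduce to the known spectrum of the Coxeter transformation of an affine Dynkin diagram, combined with a careful analysis of which Caldero-Chapoton values appear along a $\phi$-orbit for generic $\mathbf{a}$; verifying that no extra non-cyclotomic factor can appear in the generalized setting is the most delicate step.
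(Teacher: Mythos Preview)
Your proposal contains a genuine error that would make the argument fail. You claim that ``for an affine quiver the characteristic polynomial of this recurrence has roots that are roots of unity, with $1$ appearing as a double root coming from the imaginary direction of the root lattice,'' and you build the polynomial parametrization $t \mapsto (P_{i,r}(t))_i$ on this. That description is correct for the Coxeter transformation acting on the \emph{root lattice}, but the frieze values $a_{i,t}$ are not linear functionals of dimension vectors: the Caldero--Chapoton map is highly nonlinear, so the spectrum of the Coxeter element on $K_0$ does not govern the characteristic polynomial of the frieze recurrence. Concretely, Keller--Scherotzke (Theorem~\ref{fund}) gives a recurrence $x_{i,t+2m} - \mathcal{L}\cdot x_{i,t+m} + x_{i,t} = 0$ for extending vertices, where $\mathcal{L}$ is a fixed Laurent polynomial in the initial cluster (e.g.\ $\mathcal{L}=X_\delta$). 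After specializing at $\mathbf a$ the middle coefficient becomes the number $c=\mathcal{L}(\mathbf a)$, and the roots of $x^2-cx+1$ are $\rho,\rho^{-1}$, which for typical $\mathbf a$ (in particular for $\mathbf a=\mathbf 1$) are \emph{not} roots of unity: already for the Kronecker quiver $c=3$ and $\rho=(3+\sqrt5)/2$. Thus the subsequences $(a_{i,t+jm})_j$ are not polynomial in $j$; your parametrization $a_{i,Mt+r}=P_{i,r}(t)$ simply does not exist in general, and the special case you describe (double root at $1$) occurs only when $c=2$.

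The paper fixes exactly this by parametrizing through the variable $\rho^j$ rather than $j$. For each extending vertex one gets $a_{i,t+jm}=r_{i,t}(\rho^j)$ with $r_{i,t}(x)=b_{-1}x^{-1}+b_1x$ a Laurent polynomial; for non-extending vertices the paper uses almost split triangles and explicit exchange triangles in $\mathcal C_Q$ (worked out type by type, following \cite{keller2011linear}) to express $a_{k,t+jm}$ as a rational function $r_{k,t}(\rho^j)$ as well. Then Lemma~\ref{rationalcurve} (via L\"uroth, as you intended) applied to the rational map $x\mapsto (r_{1,t}(x),\dots,r_{n,t}(x))$ and the infinite set $\{\rho^j\}$ yields that each $X_t$ is a rational curve when $\rho$ is not a root of unity, and a finite set otherwise. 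Your outline becomes correct once you replace ``polynomial in $t$'' by ``rational in $\rho^j$'' and supply the nontrivial step---propagating the parametrization from extending to non-extending vertices---which is where the categorical input is really used.
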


  Our proof is uniform for generalized frieze varieties of affine quivers  and  extends the results for affine quivers in \cite{lee2020frieze}  which they shall let $\mathbf a=\mathbf 1$ and check linear recurrences for exceptional types by a computer.  We then provide a computational algorithm  for obtaining the defining ideals of generalized frieze varieties of affine types.  Moreover this algorithm allows us to compute Gr\"obner bases of generalized frieze varieties for affine quivers with arbitrary general specializations and arbitrary orientations of Euclidean diagrams.

Furthermore, for affine quivers with orientations given in Section \ref{chapteraffine},
we compute the Gr\"obner bases with respect to a given lexicographical order for ideals of irreducible components of frieze varieties, and we therefore obtain that each component is smooth. 
In particular, for frieze varieties of type $\tilde{\mathbb A}_{p,q}$, we will see that each component is  isomorphic to either $\mathbb C$  or a  plane conic, here by a plane conic,  we mean  a  curve in $\mathbb C^2$ determined by a polynomial with maximal degree   $2$.  For frieze varieties of type $\tilde{\mathbb D}_{n}$, each component is  isomorphic to a  plane conic. To conclude, we have the following result.

\begin{theorem}
Let $Q$ be an affine quiver with orientation given in Section \ref{chapteraffine}, then each irreducible component of 
the frieze  variety $X(Q,f_c,\mathbf 1)$ is smooth.
\end{theorem}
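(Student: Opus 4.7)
The plan is to use the algorithm developed in the preceding section to compute explicit defining ideals for the irreducible components of $X(Q,f_c,\mathbf 1)$, and then read off smoothness from a Gröbner basis with respect to a carefully chosen lexicographical order. By the previous theorem we already know each irreducible component is a rational curve, so the task is simply to rule out singular points on a curve we already understand fairly well.

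First, for each affine type $\tilde{\mathbb A}_{p,q}$, $\tilde{\mathbb D}_n$, $\tilde{\mathbb E}_6$, $\tilde{\mathbb E}_7$, $\tilde{\mathbb E}_8$ with the orientation fixed in Section \ref{chapteraffine}, I would iterate $f_c$ starting from the initial specialization $\mathbf 1$, and use Lemma \ref{isinvariant} to decompose the resulting orbit into the $f_c^m$-orbits whose Zariski closures are the irreducible components. For each component, running the algorithm yields a generating set of its ideal, and applying Buchberger's algorithm (or an improved variant) relative to the chosen order produces the corresponding Gröbner basis.

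The key step is to choose a lexicographical order on $\mathbb C[x_1,\dots,x_n]$ so that two distinguished variables $x_i,x_j$ come last, and then to show that the Gröbner basis for each component has the triangular form consisting of a single polynomial $g(x_i,x_j)$ of total degree at most two together with one polynomial $x_k - h_k(x_i,x_j)$ for every remaining variable. This would realize each component as the graph of a polynomial map over the plane curve $\{g=0\}\subset\mathbb C^2$, so smoothness reduces to smoothness of this plane curve. For $\tilde{\mathbb A}_{p,q}$ the expected outcome is that $g$ is either linear (giving $\mathbb C$) or an irreducible quadric, and for $\tilde{\mathbb D}_n$ an irreducible quadric. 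In either situation the curve is automatically smooth: a singular plane conic over $\mathbb C$ is a union of two lines or a double line, which is incompatible with being a single irreducible component.

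The main obstacle will be establishing the triangular form of the Gröbner basis uniformly in $p,q$ for $\tilde{\mathbb A}_{p,q}$ and in $n$ for $\tilde{\mathbb D}_n$, rather than case by case. The intended strategy is to compute a small number of initial iterates of $f_c$ explicitly, conjecture a closed expression for the cluster variables in terms of the two chosen coordinates, and prove the closed form by induction on the mutation step, using the exchange relations of $f_c$ and the tight constraints imposed by the chosen orientation. The three exceptional types $\tilde{\mathbb E}_6,\tilde{\mathbb E}_7,\tilde{\mathbb E}_8$ are finite in number, and for each of them a single direct Gröbner basis computation in any computer algebra system will finish the argument. I expect the inductive step for $\tilde{\mathbb D}_n$ to be the most delicate piece, since several families of cluster variables must be tracked simultaneously through the Coxeter iteration.
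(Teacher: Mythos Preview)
Your plan for types $\tilde{\mathbb A}_{p,q}$ and $\tilde{\mathbb D}_n$ is essentially what the paper does, though the paper proceeds more directly: rather than guessing a closed form and proving it by induction on the mutation step, it uses the explicit one-parameter Laurent parametrization already obtained in the proof of Theorem~\ref{main} (each $r_{k,t}(x)$ has the shape $(ax^2+b)/x$ or $(cx^4+dx^2+e)/x^2$), and computes the Gr\"obner basis of the implicitization ideal directly from that data. This yields exactly your triangular form $\{x_k-\tilde L_k(x_1,x_n),\,H(x_1,x_n)\}$ with $H$ an irreducible conic, so the component is isomorphic to a smooth plane conic.

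There is, however, a genuine gap in your treatment of the exceptional types. Your ``key step'' assumes that for a suitable pair of variables the Gr\"obner basis is triangular with a single plane polynomial $g(x_i,x_j)$ of degree at most two, so that smoothness reduces to smoothness of $\{g=0\}$. For $\tilde{\mathbb E}_6$, $\tilde{\mathbb E}_7$, $\tilde{\mathbb E}_8$ this is false. In the paper's computation for $\tilde{\mathbb E}_6$, the elimination to $(x_1,x_2)$ gives a degree-four curve $h_0=2x_1^2(2x_1^2-3x_2-1)+(x_2+1)^2$, which is \emph{singular} at $(0,-1)$; and the remaining Gr\"obner basis elements are not all of the form $x_k-h_k(x_1,x_2)$: there are several polynomials $h_1,h_2,h_7$ involving $x_7$ together with $x_1,x_2$. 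The component is smooth precisely because these extra relations resolve the singularity of the planar projection --- one must check the Jacobian of the full system, not of $h_0$ alone. The same phenomenon occurs for $\tilde{\mathbb E}_7$ and $\tilde{\mathbb E}_8$ (with $(x_1,x_2,x_3)$ playing the role of the non-eliminable block). So ``a single direct Gr\"obner basis computation will finish the argument'' is right only if you abandon the reduce-to-a-plane-conic strategy and instead verify nonsingularity of the Jacobian at every point, including the finitely many points lying over the singular locus of the planar shadow.
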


The paper is organized as follows.  In Section \ref{preli}, we recall basic facts including cluster theory, Gr\"obner bases and linear recurrences, and we prove some elementary properties about affine varieties we will need later.  In Section \ref{fv}, we define generalized frieze varieties for quivers associated to the cluster automorphism and we explain they extend definitions in \cite{lee2020frieze,igusa2021frieze}.  We also study properties of generalized frieze varieties, such as they behave under mutations or folding techniques. We prove generalized frieze variety of an affine quiver associated to Coxeter mutation is either a finite subset or  a finite union of rational curves in Section \ref{chapteraffine}.  Finally in Section \ref{groaff}, we study  Gr\"obner bases  for frieze varieties of affine quivers.

\section{Preliminaries}\label{preli}
\subsection{Cluster algebras and cluster categories}
In this paper, a {\em quiver} is a finite connected directed graph without loops or oriented $2$-cycles.  For a quiver $Q$, we usually denote the set of vertices by $Q_0$ and the set of arrows by $Q_1$.
A  {\em  seed}  is a pair $(\mathbf{x}, Q)$, where
\begin{enumerate}
	\item[-] $Q$ is a quiver with $n$ vertices labeled by  $1, 2, \dots, n$; 
	\item[-] $\mathbf{x} =  (x_1, \dots, x_n)$  is  a  set of indeterminates over $\mathbb{Q}$ such that the {\em ambient field} $\mathcal{F}=\mathbb{Q}(x_1, \dots, x_n)$ is a purely transcendental field extension of $\mathbb{Q}$, and $\mathbf{x}$ is called a {\em cluster},  $x_1,\dots,x_n$ are called 
	{\em  cluster   variables}.
\end{enumerate}

For $k\in [1,n]$, define another pair $(\mathbf{x'}, Q') = \mu_k(\mathbf{x},  Q)$ which is called the {\em mutation} of $(\mathbf{x}, Q)$
at $k$ and obtained by the following rules:
\begin{enumerate}
	\item[-]  the quiver $Q'$ is obtained from $Q$ by the following operations:
	\begin{enumerate}
		\item[(i)] for every $2$-path $i\rightarrow k\rightarrow j$, add a new arrow $i\rightarrow j$;
		\item[(ii)] reverse all arrows incident with $k$;
		\item[(iii)] delete a maximal collection of $2$-cycles from those created in (i).
	\end{enumerate}
	\item[-] $\mathbf{x'} = (x'_1, x'_2,\dots, x'_n)$ is given by \[x'_i =\begin{cases} x_i, &\text{$i\neq k$},\\  
		\frac{   \prod\limits_{i\rightarrow k}x_i +    \prod\limits_{k\rightarrow j}x_j         }{x_k} ,&\text{$i=k$.} \end{cases}\]
\end{enumerate}

Let $\mathcal{X}$ be the union of clusters which can be obtained from $(\mathbf x, Q)$ by a finite sequence of mutations. The {\em cluster algebra} $\mathcal{A} = \mathcal{A}(\mathbf x, Q)$ is the $\mathbb{Z}$-subalgebra of $\mathbb{Q}(x_1,x_2,\dots,x_n)$ generated by all the cluster variables in $\mathcal{X}$. 

The Laurent phenomenon and positive property are the most important properties of cluster algebras.

\begin{theorem}[ Laurent phenomenon and positivity \cite{fomin2002fundations, lee2015positivity}]\label{laurentpositivity}
	Let $\mathcal A = \mathcal A(\mathbf x, Q)$ be a cluster algebra, and $x\in \mathcal X$  any cluster variable of $\mathcal A$. Then we have 
	\[\mathcal A\subset \mathbb Z[x_1^{\pm 1}, x_2^{\pm 1}, \dots, x_n^{\pm 1}],\,\,\,\text{and}\,\; x\in \mathbb Z_{\geq 0}[x_1^{\pm 1}, x_2^{\pm 1}, \dots, x_n^{\pm 1}].\]
\end{theorem}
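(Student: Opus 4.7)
The plan splits naturally into two substantially different parts: the Laurent phenomenon and the positivity statement.

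For the Laurent phenomenon, I would follow the original strategy of Fomin--Zelevinsky. The first observation is immediate from the mutation rule: the exchange relation $x_k' = \bigl(\prod_{i\to k} x_i + \prod_{k\to j} x_j\bigr)/x_k$ shows that $x_k'\in \mathbb{Z}[x_1^{\pm 1},\ldots,x_n^{\pm 1}]$. The real content is to show that this Laurent property persists after any finite sequence of mutations. One would induct on the length of the mutation sequence from the initial seed, with the key technical device being the \emph{caterpillar lemma}: for any two distinct indices $i,j$ one analyzes the sequence of seeds obtained by alternating $\mu_i$ and $\mu_j$ and verifies that each cluster variable appearing in these seeds remains a Laurent polynomial in the fixed initial cluster $\mathbf{x}$. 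The obstacle here is algebraic bookkeeping: the denominators of the intermediate expressions must be shown to divide monomials in $\mathbf{x}$, which requires careful tracking of cancellations using the coprimality of the binomial numerators with the cluster variables being inverted in the Laurent polynomial ring $\mathbb{Z}[\mathbf{x}^{\pm 1}]$.

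For the positivity statement, I would take the categorification route, which is the most natural in the skew-symmetric (quiver) setting treated here. The plan is to invoke a Caldero--Chapoton-type formula expressing any cluster variable $x_M$, associated to a reachable rigid object $M$ in the cluster category, as
\[ x_M \;=\; \mathbf{x}^{g(M)} \sum_{\mathbf{e}} \chi\bigl(\Gr_{\mathbf{e}}(M)\bigr)\, \prod_i \Bigl(\prod_j x_j^{b_{ij}}\Bigr)^{e_i}, \]
where the sum runs over dimension vectors $\mathbf{e}$ of subrepresentations of $M$ (over the relevant Jacobian algebra) and $\chi$ denotes topological Euler characteristic. By the work of Derksen--Weyman--Zelevinsky and Plamondon in the general quiver case, every cluster variable arises this way, so it suffices to establish that the Euler characteristics $\chi(\Gr_{\mathbf{e}}(M))$ are non-negative integers.

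Establishing the non-negativity of these Euler characteristics is the main obstacle and is genuinely deep. For acyclic quivers this is due to Caldero--Reineke via a stratification argument that identifies the quiver Grassmannians with varieties admitting a cell decomposition. For arbitrary skew-symmetric cluster algebras this is the theorem of Lee--Schiffler, proved via a reduction to rank-two subcases together with the combinatorics of \emph{greedy bases} and snake-graph expansions. An alternative and more conceptual route, which I would mention as a contingency if one wishes to extend beyond the skew-symmetric setting, is through the scattering diagrams of Gross--Hacking--Keel--Kontsevich, where positivity is visible from the combinatorics of broken-line expansions of theta functions. In either approach, the essential difficulty is that neither the Laurent property nor positivity is preserved under the subtractions hidden inside iterated substitutions, so some genuinely geometric or combinatorial input is needed beyond formal manipulation of the mutation rule.
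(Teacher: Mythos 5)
This statement is quoted background in the paper: Theorem \ref{laurentpositivity} is cited to Fomin--Zelevinsky and Lee--Schiffler, and the paper gives no proof of it, so there is no internal argument to compare against; your proposal has to be judged against the proofs in the cited literature. On that score, the Laurent half is accurate: Fomin--Zelevinsky's argument is indeed an induction on mutation sequences organized around the caterpillar lemma, with the coprimality bookkeeping you describe. The positivity half, however, contains a structural error. You present Lee--Schiffler's theorem as supplying the non-negativity of the Euler characteristics $\chi(\Gr_{\mathbf e}(M))$ inside a Caldero--Chapoton-type expansion, but their proof does not pass through quiver Grassmannians or the CC formula at all: it is a direct, elementary induction on Laurent expansions using rank-two subalgebras (and it does not use greedy bases or snake graphs, which belong to rank-two and surface-type results respectively). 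The non-negativity of those Euler characteristics is a \emph{consequence} of their positivity theorem combined with the CC formula, not an ingredient of it. As written, your plan is therefore either circular in spirit (you invoke the positivity theorem to justify the intermediate step, at which point the whole categorification scaffolding is unnecessary) or it has a gap: for non-acyclic skew-symmetric quivers, no independent proof of Euler-characteristic non-negativity for arbitrary reachable rigid objects was available prior to positivity itself. The fix is to restructure the second half: either cite Lee--Schiffler directly for the positivity statement as this paper does, or commit fully to the Gross--Hacking--Keel--Kontsevich route, where positivity of the broken-line expansions of theta functions is established intrinsically and positivity of cluster variables follows.
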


For any cluster algebra, it is proved in \cite{shapiro2008exchange} that a seed $(\mathbf x, Q)$ is uniquely determined by its cluster $\mathbf x$, we will  use  $Q(\mathbf x)$ to denote the quiver associated to the cluster $\mathbf x$ and we also use $\mu_x$ to denote the mutation at the vertex of $Q(\mathbf x)$ corresponding to the cluster variable $x\in \mathbf x$.

Let $\mathcal{A} =\mathcal A(\mathbf x, Q)$ be a cluster algebra. A  $\mathbb{Z}$-algebra  automorphism $f: \mathcal{A} \rightarrow \mathcal{A}$  is called  a  {\em cluster automorphism} if there is another seed $(\mathbf x', Q')$ of $\mathcal A$ such that 
\begin{enumerate}
	\item[(CA1)]  $f(\mathbf x) = \mathbf x'$;
	\item[(CA2)]  $f(\mu_{x}(\mathbf x))  =  \mu_{f(x)}(\mathbf x')$ for every $x\in \mathbf x$.
\end{enumerate}

 As pointed out in \cite{assem2012automorphism}, each cluster automorphism is uniquely determined by its value on one cluster and naturally induces an automorphism of the ambient field $\mathcal F$. In general, the converse is not true. However, the following assertion is clear.  

\begin{lemma}[\cite{cao2019automorphism}]\label{cllp}
	Let $\mathcal A=\mathcal{A}({\bf x}, Q)\subseteq \mathcal F$ be a cluster algebra, and let $f$ be an automorphism of the ambient field $\mathcal F$. If there exists another seed $(\mathbf{z},Q')$ of $\mathcal{A}$ such that $f({\bf x})={\bf z}$ and $f(\mu_x({\bf x}))=\mu_{f(x)}({\bf z})$ for any $x\in{\bf x}$. Then $f$ restricts to a  cluster automorphism of $\mathcal{A}$. 
\end{lemma}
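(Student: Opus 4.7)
The plan is to reduce the lemma to showing that $f$ maps every cluster of $\mathcal A$ to a cluster of $\mathcal A$. Once this is proved, every cluster variable is sent to a cluster variable; since $\mathcal A$ is generated as a $\mathbb Z$-algebra by its cluster variables and the Laurent phenomenon of Theorem~\ref{laurentpositivity} ensures these images lie in $\mathcal A$, we get $f(\mathcal A)\subseteq \mathcal A$. Applying the same reasoning to $f^{-1}$, which satisfies the analogous hypothesis at $(\mathbf z, Q')$ since $f^{-1}(\mu_z(\mathbf z)) = \mu_{f^{-1}(z)}(\mathbf x)$ is immediate from the given equation, yields the reverse inclusion. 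Hence $f|_{\mathcal A}$ is a $\mathbb Z$-algebra automorphism, and conditions (CA1), (CA2) at $\mathbf x$ are exactly the given hypothesis, so $f|_{\mathcal A}$ is a cluster automorphism.

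The key preparatory step is to extract a quiver-level statement at the base seed. Writing $f(x_i) = z_{\sigma(i)}$ for the labeling bijection $\sigma$, the equality $f(\mu_{x_k}(\mathbf x)) = \mu_{f(x_k)}(\mathbf z)$ together with the explicit mutation formula and the field-automorphism property of $f$ yields, for each vertex $k$,
\[
\prod_{j \to k \text{ in } Q} z_{\sigma(j)} + \prod_{k \to j \text{ in } Q} z_{\sigma(j)} = \prod_{a \to \sigma(k) \text{ in } Q'} z_a + \prod_{\sigma(k) \to a \text{ in } Q'} z_a.
\]
Since the cluster $\mathbf z$ consists of algebraically independent elements of $\mathcal F$, matching the two monomials on each side shows that at each vertex $k$ the bijection $\sigma$ either carries in-neighbors to in-neighbors and out- to out- (``parallel''), or swaps them (``crossed''). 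A mixed assignment along a single arrow $j\to k$ would force both $\sigma(j)\to\sigma(k)$ and $\sigma(k)\to\sigma(j)$ in $Q'$, violating the no-$2$-cycle hypothesis; connectedness of $Q$ then forces a single global choice, making $\sigma$ either a quiver isomorphism or a quiver anti-isomorphism $Q\to Q'$.

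I would then induct on the length of a mutation sequence reaching a cluster $\mathbf x'$ from $\mathbf x$. The inductive hypothesis is that $f(\mathbf x')$ is a cluster $\mathbf z'$ of $\mathcal A$ and that the labeling bijection induced by $f$ gives a quiver (anti-)isomorphism $Q(\mathbf x')\to Q(\mathbf z')$. For any vertex $k$ of $Q(\mathbf x')$, applying $f$ to the mutation formula for $\mu_k(\mathbf x')$ and substituting $f$-images produces, after the $\sigma$-identification, precisely the mutation formula for $\mu_{\sigma(k)}(\mathbf z')$; this uses that the cluster mutation rule is symmetric under reversing all arrows, which handles the anti-isomorphism case uniformly with the isomorphism case. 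Hence $f(\mu_k(\mathbf x')) = \mu_{\sigma(k)}(\mathbf z')$ is again a cluster of $\mathcal A$, and because quiver mutation is equivariant under isomorphisms and anti-isomorphisms, the (anti-)isomorphism property propagates to the new pair of seeds, closing the induction. The main obstacle I expect is exactly this bookkeeping step --- verifying that ``parallel vs.\ crossed'' persists under mutation --- but it reduces cleanly to the symmetry of quiver mutation under global arrow reversal, so the overall argument is a short induction built on top of the monomial-matching dichotomy at the base seed.
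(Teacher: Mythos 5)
Your proof is correct. One caveat about the comparison: the paper does not actually prove this lemma --- it is imported verbatim from \cite{cao2019automorphism} --- so the only meaningful benchmark is the argument in that reference, and yours follows the same standard route: monomial matching in the exchange relations at the base seed (legitimate because $\mathbf z$ is algebraically independent and the in- and out-neighborhoods of a vertex are disjoint in a quiver without $2$-cycles, so the two monomials on each side are distinct), propagation of the parallel/crossed dichotomy along arrows using $2$-cycle-freeness and connectedness of $Q$ to conclude that $\sigma$ is an isomorphism $Q\to Q'$ or $Q\to (Q')^{\op}$, then induction along the exchange graph with the quiver (anti-)isomorphism carried in the inductive hypothesis (so that applying the field automorphism $f$ to an exchange relation at $(\mathbf x',Q(\mathbf x'))$ yields exactly the exchange relation at the corresponding vertex of $(\mathbf z',Q(\mathbf z'))$, the anti-isomorphism case being absorbed by the symmetry of the exchange binomial under global arrow reversal), and finally the symmetric argument for $f^{-1}$ to obtain surjectivity. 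Two minor remarks. First, the appeal to Theorem~\ref{laurentpositivity} is superfluous: cluster variables lie in $\mathcal A$ by definition, so $f(\mathcal A)\subseteq\mathcal A$ follows as soon as clusters map to clusters. Second, in the monomial-matching step you should record the degenerate case where $k$ is a sink or a source, so that one of the two monomials is the empty product $1$; the dichotomy then reads ``$\sigma(k)$ is a sink, respectively a source, of $Q'$,'' such vertices still acquire a well-defined type, and the type is forced to agree with that of their neighbors, so the connectedness argument goes through unchanged.
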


Suppose that $f$ is a $\mathbb Z$-algebra automorphism  satisfying the condition (CA1)  for a seed  $(\mathbf x,Q)$, that is, $f(\mathbf x)$ is also a cluster, we introduce the following two conditions:

(CA3)\;\;  $Q(f(\mathbf x)) \cong Q$;\quad   (CA4)\;\;  $Q(f(\mathbf x)) \cong Q^{\op}$.

Cluster automorphisms have the following equivalent characterizations.

\begin{proposition}[\cite{assem2012automorphism,cao2019automorphism}]
	Let  $f: \mathcal{A} \rightarrow \mathcal{A}$  be a $\mathbb Z$-algebra automorphism. Then the following statements are equivalent:
	\begin{enumerate}
	\item\ $f$ is a cluster automorphism;
	\item\ $f$ satisfies the condition  {\em (CA1)} for a seed;
	\item\   $f$ satisfies  the condition {\em (CA1)} for every seed;
	\item\  $f$ satisfies conditions {\em (CA1) $\&$ (CA3)}  for a seed or $f$ satisfies conditions {\em (CA1) $\&$ (CA4)}  for a seed $(\mathbf x, Q)$
	\item\  $f$ satisfies conditions {\em (CA1) $\&$ (CA3)} for every seed or $f$ satisfies conditions {\em (CA1) $\&$ (CA4)}  for every seed.  
 	\end{enumerate}
	\end{proposition}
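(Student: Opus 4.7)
The plan is to establish the chain of implications $(1)\Rightarrow(3)\Rightarrow(2)$, then $(1)\Rightarrow(5)\Rightarrow(4)$, all of which are essentially tautological or follow by induction on mutation sequences, so the real content lies in closing the loops by proving $(2)\Rightarrow(1)$ and $(4)\Rightarrow(1)$. Throughout I would work in the ambient field $\mathcal F$ and use that $f$, being a $\mathbb Z$-algebra automorphism of $\mathcal A$, extends to a field automorphism of $\mathcal F$.

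First I would dispatch the easy implications. For $(1)\Rightarrow(3)$, I would observe that a cluster automorphism sends every seed to a seed: starting from the given seed $(\mathbf x,Q)$ and iterating (CA1)--(CA2), any other cluster $\mu_{k_r}\cdots\mu_{k_1}(\mathbf x)$ is mapped to $\mu_{f(x_{k_r})}\cdots\mu_{f(x_{k_1})}(f(\mathbf x))$, which is also a cluster; $(3)\Rightarrow(2)$ is immediate. For $(1)\Rightarrow(5)\Rightarrow(4)$, the cluster-automorphism property forces at each seed an isomorphism of quivers or of their opposites (because mutation at $k$ in $Q$ and at $f(x_k)$ in $Q(f(\mathbf x))$ must produce matching new cluster variables).

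The central step is $(2)\Rightarrow(4)$, which will then combine with Lemma \ref{cllp} to yield $(2)\Rightarrow(1)$. Assume $f(\mathbf x)=\mathbf x'$ is a cluster in $\mathcal A$, and let $Q'=Q(\mathbf x')$ denote the quiver of that seed. Write $f(x_k)=x'_{\sigma(k)}$ for a bijection $\sigma$ of $[1,n]$. Apply $f$ to the exchange relation
\[ x_k\,\mu_k(x_k)\;=\;\prod_{i\to k\text{ in }Q}x_i\;+\;\prod_{k\to j\text{ in }Q}x_j \]
to obtain
\[ x'_{\sigma(k)}\cdot f(\mu_k(x_k))\;=\;\prod_{i\to k}x'_{\sigma(i)}\;+\;\prod_{k\to j}x'_{\sigma(j)}. \]
Now by the Laurent phenomenon and positivity (Theorem \ref{laurentpositivity}), $f(\mu_k(x_k))$ is an element of $\mathbb Z_{\ge 0}[(x')^{\pm 1}]$, and this equation exhibits $x'_{\sigma(k)}$ times a Laurent polynomial as a genuine binomial exchange relation in the cluster $\mathbf x'$. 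By uniqueness of the exchange polynomial attached to a cluster variable inside a given cluster (the two monomials on the right are determined by the cluster together with the position of $x'_{\sigma(k)}$), this binomial must coincide with the exchange binomial of $x'_{\sigma(k)}$ in $(\mathbf x', Q')$. Comparing the two monomials, the arrows incident to $\sigma(k)$ in $Q'$ are obtained from the arrows incident to $k$ in $Q$ by relabeling via $\sigma$, with a single global choice of sign of orientation (they either all match or all reverse, since swapping the two monomials across all $k$ is equivalent to taking the opposite quiver). Hence $Q'\cong Q$ or $Q'\cong Q^{\op}$, which is (CA3) or (CA4). The above also yields $f(\mu_k(x_k))=\mu_{f(x_k)}(\mathbf x')$, so Lemma \ref{cllp} applies and gives $(2)\Rightarrow(1)$.

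Finally, $(4)\Rightarrow(1)$ follows by the same exchange-polynomial argument at the given seed (to verify (CA2)) combined with Lemma \ref{cllp}, and $(5)\Rightarrow(1)$ is then immediate since (5) is stronger than (4). I expect the principal obstacle to be the rigorous justification that the equality of a cluster variable's exchange binomial on both sides forces the quivers $Q$ and $Q'$ to agree up to global orientation reversal; the subtlety is that positivity and the Laurent phenomenon only tell us $f(\mu_k(x_k))$ is a non-negative Laurent polynomial, and one must argue that when written in the cluster $\mathbf x'$ it is actually the next cluster variable rather than some more complicated element, which is where uniqueness of the exchange relation and the fact that $f$ is an algebra automorphism (so the left-hand side is irreducible of the correct form) play the decisive role.
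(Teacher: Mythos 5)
The paper does not actually prove this proposition: it is quoted from the literature, with the elementary equivalences due to \cite{assem2012automorphism} and the one genuinely hard implication, $(2)\Rightarrow(1)$ (a $\mathbb Z$-algebra automorphism sending a single cluster to a cluster is already a cluster automorphism), being the Chang--Schiffler conjecture, i.e.\ the main theorem of \cite{cao2019automorphism}. Your skeleton is the right one: $(1)\Rightarrow(3)\Rightarrow(2)$ and $(1)\Rightarrow(5)\Rightarrow(4)$ are standard (the local monomial comparison plus connectedness of $Q$ gives the global choice between $Q$ and $Q^{\op}$), and $(4)\Rightarrow(1)$ via Lemma \ref{cllp} is unproblematic under the standard reading of (CA3)/(CA4), where the quiver isomorphism is the one induced by the index correspondence of $f$, since then applying $f$ to the exchange relation at $k$ literally produces the exchange relation at $\sigma(k)$. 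So the entire weight of the proposition rests on your step $(2)\Rightarrow(4)$, and there your argument has a genuine gap.

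The ``uniqueness of the exchange polynomial'' you invoke is false. What is true (by \cite{shapiro2008exchange}) is that the seed, hence the exchange binomial of $x'_{\sigma(k)}$, is determined by the cluster $\mathbf x'$; what you need is the much stronger claim that \emph{every} identity of the form $x'_{\ell}\cdot u = M_1+M_2$, with $u\in\mathcal A$ a positive Laurent polynomial in $\mathbf x'$ and $M_1,M_2$ coprime monomials in the remaining cluster variables, forces $M_1+M_2$ to be the exchange binomial. This is not so: writing $E_1+E_2$ for the true exchange binomial of $x'_\ell$ and $z=\mu_{x'_\ell}(x'_\ell)$, so that $x'_\ell z=E_1+E_2$, one has
\[
x'_\ell\cdot\bigl(z\,(E_1^2-E_1E_2+E_2^2)\bigr)\;=\;(E_1+E_2)(E_1^2-E_1E_2+E_2^2)\;=\;E_1^3+E_2^3,
\]
where $E_1^3,E_2^3$ are again coprime monomials in the $x'_i$, $i\neq\ell$, and the bracketed factor $(E_1^3+E_2^3)/x'_\ell$ lies in $\mathcal A$ and is a \emph{positive} Laurent polynomial in $\mathbf x'$. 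So positivity and the Laurent phenomenon, which is all your argument uses about $f(\mu_k(x_k))$, cannot distinguish the exchange relation from such spurious relations, and your proof never rules out that $f(\mu_k(x_k))$ is one of these. Your closing remark that irreducibility of $f(\mu_k(x_k))$ should be ``decisive'' is the right instinct, but turning it into a proof requires two nontrivial ingredients absent from your write-up: (i) expanding $f(\mu_k(x_k))=z(M_1+M_2)/(E_1+E_2)$ in the \emph{adjacent} cluster $\mu_{\sigma(k)}(\mathbf x')$ and using Laurentness there to get $(E_1+E_2)\mid(M_1+M_2)$, together with a divisibility lemma for binomials showing that coprimality then forces $M_1=E_1^k$, $M_2=E_2^k$ with $k$ odd (up to swap); and (ii) the theorem of Geiss--Leclerc--Schr\"oer on factoriality of cluster algebras (cluster variables are irreducible and the units of $\mathcal A$ are $\pm1$), so that for $k\geq 3$ the factorization $f(\mu_k(x_k))=z\cdot(E_1^{k-1}-E_1^{k-2}E_2+\cdots+E_2^{k-1})$ contradicts irreducibility and forces $k=1$. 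Neither ingredient is a formal consequence of Theorem \ref{laurentpositivity}; filling them in is precisely the content of the cited result in \cite{cao2019automorphism}, so as written your central step is an assertion of the theorem rather than a proof of it.
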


 We call a cluster automorphism $f$ a {\em direct cluster automorphism} if the condition (CA3) is satisfied and an {\em inverse cluster automorphism} if the condition (CA4) is satisfied.  By [Proposition 2.15 $\&$ 2.17, \cite{assem2012automorphism}],  if $\sigma\in S_n$ is an automorphism (or an anti-automorphism)  of $Q$, then 
$\sigma$ induces a  direct (or an inverse) cluster automorphism  $f_{\sigma}$ defined  by 
 \[f_{\sigma}(x_i) := x_{\sigma(i)},\quad i\in Q_0.\] 
It is clear that the cluster automorphism $f_{\sigma}$  satisfies that
\[     f_{\sigma} (\mu_{i_p}\dots\mu_{i_1}(\mathbf x))  =   \mu_{\sigma(i_p)}\dots\mu_{\sigma(i_1)}(f_{\sigma}(\mathbf x))  \]
for each mutation sequence $\mu_{i_p}\dots\mu_{i_1}$.

The cluster category, introduced in \cite{buan2006clustercat}, provides a framework
 of representation theory of quivers to study cluster algebras. We recall some basic facts on cluster categories and the Caldero-Chapoton map, and we  refer to \cite{keller2010quiverrep} for further details.
 
 Let $Q$ be an {\em acyclic quiver}, \ie, a quiver without oriented cycles, and let $\mathcal D_Q=\mathcal{D}^b({\mathbb C}Q^{op})$ denote the bounded derived category of finitely generated $\mathbb CQ^{op}$-modules\footnote{Here we use the opposite quiver to make our notations adjust to that in \cite{keller2011linear}.} with the shift functor $\Sigma$. Denote by $\tau$ the  AR-translation  of 
$\mathcal D_Q$. Then the {\em cluster category} $\mathcal C_Q$ is defined to be the orbit category 
\[\mathcal  C_Q := \mathcal D_Q / \langle \tau^{-1} \Sigma\rangle,\] which is a   Krull-Schmidt  $2$-Calabi-Yau  triangulated category with isomorphism classes of indecomposable objects given by 
\[  \ind \mathcal C_Q =\ind \mathbb CQ^{op} \,\cup\, \{\Sigma P_i, i\in Q_0\},   \] 
where $P_i$ is the indecomposable  projective $\mathbb CQ^{op}$-module associated with the vertex $i$. We will use $S_i$ to denote the simple module associated with the vertex $i$.

The {\em Caldero-Chapoton map}, denoted by $X_{?}$, maps each object of $\mathcal C_Q$ to a Laurent polynomial in $\mathbb Q[x_1^{\pm 1}, \dots, x_n^{\pm 1}]$ satisfying the following conditions:
\begin{enumerate}
\item[-]  $X_{\tau P_i} =x_i$ for $i\in Q_0$.
\item[-]  $X_?$ induces a bijection of the set of isomorphism classes of indecomposable rigid objects of  $\mathcal C_Q$  and the set of cluster variables of the cluster algebra $\mathcal A_Q$. 
\item[-] $X_?$ induces a bijection of the set of isomorphism classes of basic cluster-tilting objects of  $\mathcal C_Q$  and the set of clusters of the cluster algebra $\mathcal A_Q$, and it is compatible with mutation. 
\item[-] for each almost split triangle $L\rightarrow N\rightarrow M\rightarrow\Sigma L,$ then
\[  X_LX_M=X_N+1. \]
\item[-] for rigid  indecomposable objects $L,M$ such that $\dim\Ext^1(L,M)=1$ with non-split triangles 
\[  L\rightarrow B \rightarrow M  \rightarrow \Sigma L, \quad \text{and}\;\,   M\rightarrow B' \rightarrow L  \rightarrow \Sigma M, \]
then  $X_LX_M = X_B+X_{B'}$.  In this case, we denote by $\ET(L,M) = (B,B')$ the non-split triangles above and call it the {\em exchange triangles}.
\end{enumerate}

Suppose that $Q_0 = \{ 1,\dots, n\}$ such that each $k$ is a sink in $\mu_{k-1}\dots\mu_1(Q)$, and define $\mu_c := \mu_n\dots\mu_1$, which is called the {\em Coxeter mutation} in \cite{igusa2021frieze}.  For $t\in \mathbb N$, denote by $\mathbf x_t$ the cluster 
\[\mathbf x_t = (x_{1,t},\dots,x_{n,t}) := \mu^t_c(\mathbf x)\] 
obtained from $\mathbf x$ by applying $\mu_c$ for $t$ times. In the following notations, arrows in the subscripts are considered in $Q$.  By the mutation formula,  for 
any $t$ and $i$,  we have that 
\[   x_{i,t}x_{i,t+1} = \prod\limits_{j\rightarrow i}x_{j,t}\cdot   \prod\limits_{i\rightarrow k}x_{k,t+1}  +1.                \]

For $t\in \mathbb N$ and $i\in Q_0$, there is an almost split triangle
\[ \tau^{-t+1}P_i  \rightarrow  \bigoplus_{j\rightarrow i}\tau^{-t+1}P_j\oplus \bigoplus_{i\rightarrow k}\tau^{-t}P_k \rightarrow \tau^{-t}P_i \rightarrow    \Sigma \tau^{-t+1}P_i.\]

Then \[X_{\tau^{-t+1}P_i}X_{\tau^{-t}P_i} = \prod\limits_{j\rightarrow i}X_{\tau^{-t+1}P_j}\cdot \prod\limits_{i\rightarrow k}X_{\tau^{-t}P_k} +1. \]

Thus  $x_{i,t} = X_{\tau^{-t+1}P_i}$ for $t\in \mathbb N$ and $i\in Q_0$.

\subsection{Affine varieties and Gr\"obner bases}
In this paper, we only work over the field $\mathbb{C}$ of complex numbers  and the affine space $\mathbb C^n$. We will use  $\mathbb C^*:=\mathbb C\setminus\{0\}$ to denote the multiplicative group of $\mathbb C$. 

Let $\mathbb C^n$ be the affine $n$-space and $A=\mathbb{C}[x_1,\dots,x_n]$ the polynomial ring with $n$ variables. An {\em affine variety}  (or simply {\em variety}) is the common zero locus of a collection of polynomials in $A$. Beware that we do not require an affine variety to be irreducible. We denote by $K(X)$ the function field for any irreducible affine variety $X$.

 For a subset $T\subset A$, the common zero locus of polynomials in  $T$ is denoted by $\Z(T)$. For a subset  $V\subset \mathbb C^n$, the ideal of polynomials in $A$ vanishing on $V$ is denoted by $\I(V)$. 
It is well-known that the Zariski closure of  $V$ is  the set $\Z(\I(V))$.

\begin{definition}
	Let $X\subset \mathbb C^n$ be an affine variety, we say
	\begin{enumerate}
		\item[-]  $X$ is an {\em algebraic curve}, if $\dim X =1$.
		\item[-]  $X$ is {\em rational}, if $X$ is irreducible and  birationally  equivalent to $\mathbb{C}^r$ for some  $r$.
		\item[-]  $X$ is a {\em rational curve}, if $X$ is an algebraic curve and $X$ is rational, or equivalently $X$ is irreducible and  birationally  equivalent to $\mathbb{C}^1$.
\end{enumerate}                                                                                         
\end{definition}

\begin{lemma}\label{buqueding}
	Let $f: \mathbb  C^m \dashrightarrow \mathbb C^n$ be a rational map represented  by 
	$(U, f)$ where $U$ is an nonempty subset of $\mathbb C^m$ and $f: U\rightarrow \mathbb C^n$ is a morphism.  Then for any nonempty subsets $S_1, S_2, S\subset U$, we have that
	\begin{enumerate}
		\item\ $\dim \overline{S}\geq \dim \overline{f(S)}.$
		\item\ if $\overline{S_1} = \overline{S_2}$, then $\overline{f(S_1)}  =  \overline{f(S_2)}$.
	\end{enumerate}

	If, in addition, $f$ is birational with the inverse represented by $(V,g)$ and $f(S)\subset V$, we have that  $\dim \overline{S}= \dim \overline{f(S)}$. 
\end{lemma}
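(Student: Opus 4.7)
The plan is to reduce both parts, together with the birational supplement, to a single closure identity: for any nonempty $S \subset U$,
\[
\overline{f(S)} \;=\; \overline{f(\overline{S} \cap U)}.
\]
The inclusion $\subset$ is immediate since $S \subset \overline{S} \cap U$. For the reverse, the preimage $f^{-1}(\overline{f(S)})$ is Zariski-closed in $U$ (by continuity of the morphism $f$) and contains $S$; since the closure of $S$ in the subspace topology of $U$ is precisely $\overline{S} \cap U$, we obtain $f(\overline{S} \cap U) \subset \overline{f(S)}$, and the identity follows upon taking closures in $\mathbb{C}^n$.

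Part (2) is then immediate: if $\overline{S_1} = \overline{S_2}$, both $\overline{f(S_i)}$ coincide with $\overline{f(\overline{S_1} \cap U)}$. For part (1), I would decompose $\overline{S}$ into irreducible components $Y_1, \dots, Y_r$. Since $S$ is dense in $\overline{S}$, it must meet every $Y_i$ (otherwise $S$ would lie in the proper closed subset $\bigcup_{j \neq i} Y_j$ of $\overline{S}$), so each $Y_i \cap U$ is a nonempty open subset of $Y_i$, which gives $\dim(\overline{S} \cap U) = \max_i \dim Y_i = \dim \overline{S}$. Now the standard fact that the closure of the image of a morphism from a variety into affine space has dimension at most that of the source, applied to $f|_{\overline{S} \cap U}$, together with the closure identity above, yields
\[
\dim \overline{f(S)} \;=\; \dim \overline{f(\overline{S} \cap U)} \;\leq\; \dim(\overline{S} \cap U) \;=\; \dim \overline{S}.
\]

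For the birational supplement, the assumption $f(S) \subset V$ means $g$ is defined on $f(S)$, and since $g$ is the rational inverse of $f$ we have $g(f(s)) = s$ for every $s \in S$, hence $g(f(S)) = S$. Applying part (1) to the rational map $g : \mathbb{C}^n \dashrightarrow \mathbb{C}^m$ with the subset $f(S) \subset V$ then gives $\dim \overline{f(S)} \geq \dim \overline{g(f(S))} = \dim \overline{S}$, which combined with part (1) for $f$ yields the desired equality.

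The main subtlety is the topological identification of the closure of $S$ inside $U$ (for the subspace topology) with $\overline{S} \cap U$, together with the systematic use of Zariski continuity of $f$; both are standard, so I do not anticipate any real obstacle. The argument is uniform and does not require $\overline{S}$ to be irreducible.
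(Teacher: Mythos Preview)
Your proof is correct. The overall strategy differs from the paper's in a pleasant way: you organize everything around the single closure identity $\overline{f(S)} = \overline{f(\overline{S}\cap U)}$, from which (2) falls out immediately by topology, and (1) reduces to the standard dimension bound for images of morphisms. The paper instead treats the two parts by separate arguments: for (1) it carries out explicitly the function-field computation you invoke as a ``standard fact'' (decomposing $\overline{S}$ into irreducibles and comparing transcendence degrees via the injection $K(\overline{f(X_i\cap U)}) \hookrightarrow K(X_i)$), and for (2) it gives a direct ideal-theoretic argument, writing $h\circ f = H/G$ on $U$ and observing that $h \in \I(f(S_i)) \iff H \in \I(S_i)$. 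Your route is more unified and purely topological; the paper's is more algebraically explicit but slightly less economical. You also spell out the birational supplement (applying (1) to $g$ on $f(S)$), which the paper states but does not write out.
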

\begin{proof}
	(1) Assume that $X = \overline{S}$  has the irreducible decomposition 
	$X= X_1\cup \dots  \cup X_r$.  Since we have that 
	\[   \overline{f(S)} \subset \overline{f(X\cap U)}  =   \bigcup\limits_{i=1}^r\overline{f(X_i\cap U)},  \] 
	it is enough to prove that  $\dim X \geq  \dim \overline{f(X\cap U)}$ when $X$ is  an irreducible affine variety and  $X\cap U$ is not empty.  
	
	Let $Y =  \overline{f(X\cap U)}$.  Since $X\cap U$ is irreducible and $f$ is continuous,  it follows that $Y$ is also irreducible.
	We  have a dominant rational map $f:  X\dashrightarrow Y$ between irreducible affine varieties and thus an injective field homomorphism \[ f^*:  K(Y)   \hookrightarrow K(X). \]

	Thus we have 
	\[ \dim Y = \trdeg_{\mathbb C}K(Y) \leq \trdeg_{\mathbb C}K(X)  =\dim X,\]
	here $\trdeg_{\mathbb C}$ denotes the transcendence degree of the function field over $\mathbb C$. 
	
	(2) It is clear that $\I(S_1) = \I(S_2)$.
	Since for any polynomial $h\in A$, the map $h\circ f$ is regular on the open subset $U$, there are polynomials $H,G\in A$ such that 
	\[h(f(\mathbf x))  = \frac{H(\mathbf x)}{G(\mathbf x)}  \quad \text{on   $U$,}  \,\, \Z(G)\cap U=\emptyset.\]

	Thus we have that 
	\[h(\mathbf x)  \in   \I(f(S_2))\iff    H(\mathbf x) \in \I(S_2 )\iff H(\mathbf x) \in \I(S_1 )\iff  h(\mathbf x) \in \I(f(S_1)).  \]

	This completes the proof.
\end{proof}

\begin{definition}
	We say an affine variety  $X$  is {\em parametric}, if there are rational functions $r_1, \dots, r_n\in \mathbb{C}(x_1,\dots,x_m)$  with $r_i = \frac{f_i}{g_i}$, $f_i,g_i\in  \mathbb{C}[x_1,\dots,x_m]$ $(1\leq i\leq n)$  and $g=g_1\dots g_n\neq 0$   such that $X$ is the Zariski closure 	of  the subset  $Y\subset \mathbb C^n$  given by 
	\begin{equation}\label{densey} Y : =\{(r_1(\mathbf x), \dots, r_n(\mathbf x))\in \mathbb C^n \,|\, \mathbf x\in \mathbb C^m\setminus \Z(g)\}.\end{equation}

	Moreover, we say $X$ is {\em nontrivially parametrized by rational functions $r_1, \dots, r_n$},  if at least one of these rational  functions is  not a constant. 
\end{definition}

Not all of affine varieties can be parametrized by rational functions, and a parametric variety may have different parametrizations. The following Lemma shows only irreducible affine varieties can be parametrized and their dimensions are controlled by the number of parameters.

\begin{lemma}\label{xx}
	Suppose that $X\subset \mathbb C^n$  is an affine variety  parametrized by 
	rational functions  $r_1, \dots, r_n\in \mathbb C(x_1,\dots,x_m)$ and 
	$r_i =\frac{f_i}{g_i}$ with $f_i,g_i\in \mathbb C[x_1,\dots,x_m]$ and $g_i\neq 0$ for all $1\leq i\leq n$. Then 
	\begin{enumerate}
		\item\  $X$ is irreducible;
		\item\ $\I(Y)$ is prime, where $Y$ is given by (\ref{densey});
		\item\ $\dim X\leq m$.
	\end{enumerate}
\end{lemma}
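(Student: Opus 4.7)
The proof splits naturally according to the three claims, but all three stem from one observation: the parametrization defines a rational map $\phi : \mathbb{C}^m \dashrightarrow \mathbb{C}^n$, represented by $(U,\phi)$ with $U := \mathbb{C}^m \setminus \Z(g)$ and $\phi(\mathbf{x}) := (r_1(\mathbf{x}),\dots,r_n(\mathbf{x}))$, and the parametrized set $Y$ of \eqref{densey} is precisely $\phi(U)$. The plan is to exploit this together with Lemma~\ref{buqueding}.

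\medskip

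First I would prove (1). Since $g \neq 0$, the open set $U$ is a nonempty Zariski-open subset of the irreducible space $\mathbb{C}^m$, hence itself irreducible. The map $\phi$ is regular on $U$ (each $r_i = f_i/g_i$ is defined there), so $Y = \phi(U)$ is the image of an irreducible set under a continuous map and is therefore irreducible. Its Zariski closure $X = \overline{Y}$ is then irreducible as well.

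\medskip

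Part (2) is then immediate: one always has $\I(Y) = \I(\overline{Y}) = \I(X)$, and the ideal of an irreducible affine variety is prime.

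\medskip

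For (3) I would apply Lemma~\ref{buqueding}(1) to $\phi$ with $S := U$. Since $U$ is a nonempty Zariski-open subset of $\mathbb{C}^m$, its closure is $\overline{U} = \mathbb{C}^m$, and by construction $\overline{\phi(U)} = \overline{Y} = X$. The lemma then gives
\[
\dim X \;=\; \dim \overline{\phi(S)} \;\leq\; \dim \overline{S} \;=\; \dim \mathbb{C}^m \;=\; m.
\]
(Equivalently, one could observe directly that $\phi$ induces an injection of function fields $\phi^* : K(X) \hookrightarrow K(\mathbb{C}^m) = \mathbb{C}(x_1,\dots,x_m)$ because $\phi$ is dominant onto $X$, and conclude by comparing transcendence degrees; but since Lemma~\ref{buqueding} already encapsulates exactly this argument, reusing it is cleaner.)

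\medskip

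No step presents a serious obstacle: the only subtlety is making sure one views $Y$ as the image of the \emph{irreducible} open set $U$ rather than of $\mathbb{C}^m$ itself, so that the continuous-image-of-irreducible argument applies cleanly. Once (1) is secured, (2) and (3) follow formally.
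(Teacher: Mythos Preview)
Your proof is correct and follows essentially the same approach as the paper: define the rational map $\varphi$ on $U=\mathbb C^m\setminus\Z(g)$, use irreducibility of $U$ and continuity of $\varphi$ to get (1) and hence (2), and then invoke Lemma~\ref{buqueding}(1) with $S=U$ (so $\overline U=\mathbb C^m$) to obtain the dimension bound in (3). The paper's argument is identical in structure and in the lemmas it appeals to.
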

\begin{proof}
	(1) $\&$ (2) :
	Let $\varphi: \mathbb C^m\dashrightarrow \mathbb C^n$ be the rational map given by:
	\[ \begin{split} \varphi:  \quad &U \longrightarrow \mathbb C^n \\  &\mathbf x \longmapsto  (r_1(\mathbf x),\dots,  r_n(\mathbf x)),   \end{split}\]
	where $U=\mathbb C^m \setminus \Z(g_1\dots g_n)$ is the nonempty open subset of $\mathbb C^m$.  Since $\varphi$ is a morphism from the irreducible variety $U$ to $\mathbb C^n$ and therefore  $\varphi$ is continuous, then  $X = \overline{Y}=\overline{\varphi(U)}$ is irreducible.     
	  Hence $\I(Y)  =  \I(\overline Y)$ is prime.
	
	(2) By Lemma \ref{buqueding}, we have that 
	\[ \dim X = \dim \overline{\varphi(U)} \leq  \dim \overline{U} = \dim \mathbb C^m = m.\]
	This finishes the proof.
\end{proof}

The following Lemma is about the genus of  rational  curves.

\begin{lemma}[\cite{fulton2008curve}]\label{rationalgenus}
	Let $X$ be an irreducible algebraic curve.  Then  $X$ is rational if and only if  the genus of $X$ is zero.
\end{lemma}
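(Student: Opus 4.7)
The strategy is to reduce the statement to the case of smooth projective curves, where the equivalence of rationality and vanishing genus is standard. The first step is to replace $X$ by its smooth projective model $\tilde X$: the unique smooth projective curve (up to isomorphism) with function field $K(\tilde X) = K(X)$, obtained by normalizing a projective completion of $X$. Since rationality of $X$ is by definition the condition $K(X) \cong \mathbb C(t)$, and since the genus of an irreducible curve is by convention the genus of its smooth projective model, both assertions in the lemma depend only on the birational class of $X$. One may therefore assume throughout that $X$ is smooth and projective.

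Under this reduction the forward implication is immediate: if $X \cong \mathbb P^1$ then $h^0(X,\Omega^1_X)=0$ by direct computation, so $g(X)=0$. For the converse, assuming $g(X) = 0$, I would fix a closed point $P \in X$ and examine the divisor $D = P$. The Riemann--Roch theorem yields
\[ h^0(D) - h^0(K - D) = \deg D + 1 - g = 2. \]
Because $g = 0$ forces $\deg K = -2$, and hence $\deg(K - D) = -3 < 0$, the term $h^0(K - D)$ vanishes and we obtain $h^0(D) = 2$. This furnishes a non-constant rational function with a single simple pole at $P$, equivalently a morphism $X \to \mathbb P^1$ of degree one, which must be an isomorphism between smooth projective curves. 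Therefore $X \cong \mathbb P^1$, and $X$ is rational.

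The main conceptual (rather than computational) obstacle is the invocation of Riemann--Roch combined with the existence and uniqueness of the smooth projective model; these are precisely the tools supplied by the cited reference of Fulton, and together they reduce the lemma to the short calculations above. No further work on the affine curve $X$ itself is required, which is why the author is content to quote the result.
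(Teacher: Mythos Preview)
Your argument is correct and is precisely the classical Riemann--Roch argument for this fact. Note, however, that the paper does not supply a proof of this lemma at all: it is stated with a citation to Fulton's \emph{Algebraic Curves} and no proof environment follows. So there is no ``paper's own proof'' to compare against; the author simply quotes the result from the literature, and your write-up reconstructs the standard proof that one would find in the cited reference.
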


A parametric affine variety is not necessarily rational, however   L{\"u}roth's Theorem guarantees that a parametric algebraic curve is always rational.

\begin{lemma}\label{rationalcurve}
	Let $\varphi : U=\mathbb C^1\setminus \Z(g) \rightarrow \mathbb C^n$ be the map given by 
	\begin{equation}\label{rationalpara} t\longmapsto (\frac{f_1(t)}{g_1(t)},\dots, \frac{f_n(t)}{g_n(t)}) \end{equation}
	with $f_1(t), \dots, f_n(t), g_1(t),\dots, g_n(t) \in \mathbb{C}[t]$ such that the product $g(t)=g_1(t)\dots g_n(t)$ is nonzero and $f_i(t)$ and $g_i(t)$ have no common roots for each $i$. Suppose that not all of these rational functions  $\frac{f_i(t)}{g_i(t)}\, (1\leq i\leq n)$  are constants and $S\subset U$ is an arbitrary  infinite subset, then we have that
	\begin{enumerate}
		\item\ $\overline{\varphi(U)} = \overline{\varphi(S)}$;
		\item\ $X = \overline{\varphi(S)}$ is a rational curve.  In particular, the genus of $X$ is zero.
	\end{enumerate} 
\end{lemma}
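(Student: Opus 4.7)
The plan is to first show that $\overline{\varphi(U)}$ is an irreducible affine curve, then establish (1) by verifying that $\varphi(S)$ is infinite and hence cannot fit inside a proper closed subvariety of $\overline{\varphi(U)}$, and finally invoke L\"uroth's theorem to deduce rationality.

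For (1), applying Lemma \ref{xx} with $m=1$ shows that $X_0 := \overline{\varphi(U)}$ is irreducible of dimension at most one. Since by hypothesis at least one coordinate function $r_i = f_i/g_i$ is non-constant, $\varphi(U)$ is not a single point, so $\dim X_0 = 1$ and $X_0$ is an irreducible affine curve. I next check that $\varphi(S)$ is infinite: otherwise the pigeonhole principle would supply a point $p=(p_1,\dots,p_n)$ with $\varphi(t)=p$ for infinitely many $t \in S$, forcing each polynomial $f_i(t) - p_i g_i(t)$ to have infinitely many roots and hence to vanish identically, so every $r_i$ would be constant, contradicting the hypothesis. Because $\overline{\varphi(S)} \subseteq X_0$ and every proper Zariski-closed subset of the irreducible curve $X_0$ is zero-dimensional (hence finite), the containment of the infinite set $\varphi(S)$ forces $\overline{\varphi(S)} = X_0$, which is (1).

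For (2), the morphism $\varphi : U \to X$ from a nonempty open subset of $\mathbb C^1$ to the irreducible curve $X$ is dominant, so it defines a dominant rational map $\mathbb C^1 \dashrightarrow X$. Pulling back regular functions gives an injection of function fields $\varphi^* : K(X) \hookrightarrow K(\mathbb C^1) = \mathbb C(t)$. Since $X$ is an irreducible curve, $\trdeg_{\mathbb C} K(X) = 1$, so $\varphi^*(K(X))$ is a subfield of $\mathbb C(t)$ strictly containing $\mathbb C$. L\"uroth's theorem then yields $\varphi^*(K(X)) \cong \mathbb C(t)$ as $\mathbb C$-algebras, hence $K(X) \cong K(\mathbb C^1)$, so $X$ is birational to $\mathbb C^1$ and therefore rational. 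Lemma \ref{rationalgenus} then gives that the genus of $X$ is zero.

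I do not expect a serious obstacle here: once Lemma \ref{xx} pins down irreducibility and dimension, the proof is two short formal steps. The only subtle point is that the parametrization need not be generically injective (consider $t \mapsto (t^2,t^3)$ versus $t \mapsto t^2$ onto $\mathbb C^1$), which is exactly the place where L\"uroth's theorem does the essential work by converting a dominant map from $\mathbb C^1$ into a genuine birational equivalence on the target.
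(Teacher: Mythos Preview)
Your proof is correct and follows essentially the same route as the paper: irreducibility and the dimension bound come from Lemma~\ref{xx}, the infinitude of $\varphi(S)$ is obtained by the same polynomial-with-infinitely-many-roots argument (the paper applies it to a single non-constant coordinate rather than via pigeonhole, but the content is identical), and part~(2) is the same L\"uroth argument verbatim.
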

\begin{proof}
	(1)  It is clear that  $\varphi(S)$ is an infinite set. Indeed, suppose that $r(t) = f(t)/g(t)$ is not a constant. If  $r(S)$ is a finite set, then we may assume that  $r(t) = c$ is a constant on an infinite subset $S'\subset S$.  Let $h(t) = f(t) -cg(t) \in \mathbb C[t]$.  It follows that $h(t)$ must be zero and this is a contradiction.		
Therefore $\dim \overline{\varphi(S)}>0$.	By Lemma \ref{xx}, we have that \[0<  \dim \overline{\varphi(S)} \leq  \dim \overline{\varphi(U)}  \leq 1.\] 

Thus  $\dim \overline{\varphi(S)} = \dim \overline{\varphi(U)}
	=1$. Since $\overline{\varphi(S)}$ is a closed subset of the irreducible affine  variety  $\overline{\varphi(U)}$, we have  $\overline{\varphi(S)}=  \overline{\varphi(U)}$.
	
	(2) By Lemma \ref{xx}, $X$ is an irreducible algebraic curve.   The dominant  rational map \[\varphi: \mathbb C^1\dashrightarrow  X =  \overline{\varphi(U)}\]
	between two irreducible affine varieties induces an  injective  homomorphism of function fields
	\[   \varphi^*: K(X)  \hookrightarrow  \mathbb C(x).\]

	The conclusion that  $\varphi^*$ is an isomorphism    follows from the L{\"u}roth's Theorem, which 
	claims a subfield of $\mathbb C(x)$ strictly containing $\mathbb C$ is $\mathbb C$-isomorphic to $\mathbb C(x)$.   Thus $X$ is a rational curve.
\end{proof}

The first conclusion in Lemma \ref{rationalcurve} can be false if there are $m$ parameters with $m\geq 2$. Indeed, if $S\subset \mathbb C^m$ is an infinite set, then $\varphi(S)\subset \mathbb C^n$ may be finite.

For a parametric curve as given in Lemma \ref{rationalcurve}, there are different ways in practice  to give their defining polynomials. The following lemma provides an  available method to obtain the prime ideal  $J$ such that $\Z(J) = \overline{\varphi(U)}$. 

\begin{lemma}[Elimination]\label{ratioanlimplicit}
	Let $\varphi$ be given as in (\ref{rationalpara}), and
	let $\widetilde{J}$ and $J$  the ideals given as follows:
	\[\begin{split}  \widetilde{J} &= \langle   x_ig_i(t) - f_i(t), 1\leq i\leq n \rangle \subset \mathbb{C}[t,x_1,\dots,x_n],\\ J &= \widetilde{J}\cap \mathbb{C}[x_1,\dots,x_n] \subset \mathbb{C}[x_1,\dots,x_n].\end{split}\] 
	Then   we have that 
	\begin{enumerate}
		\item\   $\widetilde{J}, J$ are prime ideals and $\I(\varphi(U)) = J$;
		\item\   $\overline{\varphi(U)}  =  \Z(J) $.
	\end{enumerate}
\end{lemma}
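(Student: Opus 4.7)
The plan is to prove primality of $\widetilde{J}$ by showing directly that $R := \C[t, x_1, \dots, x_n]$ modulo $\widetilde{J}$ is an integral domain, via an isomorphism with the localized polynomial ring $\C[t, g^{-1}]$ where $g = g_1 \cdots g_n$. First I would observe that in the localization $R[g^{-1}]$, each generator $\ell_i := x_i g_i(t) - f_i(t)$ of $\widetilde{J}$ equals $g_i(t) \cdot (x_i - f_i(t)/g_i(t))$ up to the unit $g_i(t)$, so the extended ideal $\widetilde{J} R[g^{-1}]$ coincides with $(x_i - f_i/g_i : 1 \le i \le n)$. Performing the substitution $x_i \mapsto f_i/g_i$ then gives a clean isomorphism $R[g^{-1}]/\widetilde{J} R[g^{-1}] \cong \C[t, g^{-1}]$.

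The main step will be to push this isomorphism back down to $R/\widetilde{J}$, i.e.\ to show that the localization map $R/\widetilde{J} \to R/\widetilde{J}[g^{-1}]$ is already an isomorphism. Here the hypothesis that $f_i$ and $g_i$ have no common roots enters crucially: for each $i$, pick polynomials $u_i(t), v_i(t) \in \C[t]$ realizing the B\'ezout identity $u_i(t) f_i(t) + v_i(t) g_i(t) = 1$. Then modulo $\widetilde{J}$, the relation $g_i(t) x_i \equiv f_i(t)$ gives
$$1 \;=\; u_i(t) f_i(t) + v_i(t) g_i(t) \;\equiv\; g_i(t)\bigl(u_i(t) x_i + v_i(t)\bigr) \pmod{\widetilde{J}},$$
so each $g_i(t)$ is a unit in $R/\widetilde{J}$, and hence so is $g(t)$. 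Therefore the map $R/\widetilde{J} \to R/\widetilde{J}[g^{-1}] \cong \C[t, g^{-1}]$ is an isomorphism, $R/\widetilde{J}$ is an integral domain, $\widetilde{J}$ is prime, and its contraction $J = \widetilde{J} \cap \C[x_1, \dots, x_n]$ is prime as the contraction of a prime ideal.

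The remaining claims follow routinely. For $\I(\varphi(U)) = J$: if $F \in J \subseteq \widetilde{J}$, then $F$ vanishes under the composed evaluation $R \twoheadrightarrow R/\widetilde{J} \cong \C[t, g^{-1}] \hookrightarrow \C(t)$, so $F(f_1(t)/g_1(t), \dots, f_n(t)/g_n(t))$ is identically zero in $t$ and hence $F$ vanishes on every point of $\varphi(U)$; conversely, if $F \in \C[x_1, \dots, x_n]$ vanishes on $\varphi(U)$, then $F(f_1/g_1, \dots, f_n/g_n) \in \C(t)$ has infinitely many zeros and must vanish identically, giving $F \in \widetilde{J} \cap \C[x_1, \dots, x_n] = J$. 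Part (2) is then immediate: $\overline{\varphi(U)} = \Z(\I(\varphi(U))) = \Z(J)$. The subtlest ingredient is the coprimality $\gcd(f_i, g_i) = 1$, which is precisely what makes each $g_i(t)$ invertible modulo $\widetilde{J}$; without this hypothesis, $R/\widetilde{J}$ could acquire embedded or spurious components over the zero set of $g$, and the lemma would fail.
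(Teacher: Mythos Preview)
The paper states this lemma without proof, treating it as a standard result from elimination theory (it is a variant of the rational implicitization theorem found in most Gr\"obner basis texts, e.g.\ Cox--Little--O'Shea). Your argument is correct and supplies what the paper omits.

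The B\'ezout step is exactly the right observation: coprimality of $f_i$ and $g_i$ is what forces each $g_i(t)$ to be a unit in $R/\widetilde{J}$, and this is precisely what distinguishes the one-parameter case from the general rational-implicitization theorem, where one must instead adjoin an auxiliary variable $y$ and the saturation relation $1 - y\,g(t)$ to clear the denominators. Your remark that without this hypothesis $R/\widetilde{J}$ could pick up extra components over $\Z(g)$ is also on point. The identification $R/\widetilde{J} \cong \C[t,g^{-1}]$ then makes both the primality of $\widetilde{J}$ and the equality $\I(\varphi(U)) = J$ transparent, and part~(2) follows immediately from $\overline{\varphi(U)} = \Z(\I(\varphi(U)))$.
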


Lemma \ref{ratioanlimplicit} provides an algorithm to find the ideal for parametric varieties with one parameter. More precisely, one may compute the Gr{\"o}bner basis for $\widetilde{J}$  with respect to the lexicographic order $t>x_1>\dots>x_n$ and  elements of the Gr{\"o}bner basis not involving  $t$  define the variety $\Z(J)$ as we will explain later.  So let us  recall some basic facts on Gr\"obner bases, and we refer to \cite{adams1994groebner} for more details.

  Let $A=\mathbb C[x_1,\dots,x_n]$ be the polynomial ring and we write $x_1>\dots> x_n$  as the  {\em lexicographic order}  $>$  on the set  $\{\mathbf x^{\mathbf a}, \mathbf a\in \mathbb N^n\}$  of  monomials   of $A$, which is a total order such that  \[x_1^{a_1}\dots x_n^{a_n} >  x_1^{b_1}\dots x_n^{b_n}\]  if  either $a_i=b_i$ for all $i$, or the leftmost nonzero entry of  $(a_1-b_1, \dots,a_n-b_n)$ is positive.  
  
  The lexicographic order is a monomial order (or term order, we refer to \cite{adams1994groebner} for more details),  and it is the only monomial order we consider in this paper. A  {\em block order} with respect to a partition $[1,n] = I_1\sqcup I_2$  is a lexicographic order such that $x_i> x_j$ for all $i\in I_1$ and $j\in I_2$.
  
  For each nonzero polynomial $h\in A$, the {\em leading monomial}  $\Lm(h)$  and  {\em leading term}  $\Lt(h)$ of   $h  =\sum\limits_{\mathbf a\in \mathbf N^n} c_{\mathbf a}\mathbf x^{\mathbf a}$ are defined  as follows:
\[\Lm(h) := \max\{ \mathbf x^{\mathbf a},   c_{\mathbf a}\neq 0   \}, \;\;\text{and}\;\; \Lt(h) := c_{\Lm(h) }\mathbf x^{\Lm(h) },\]
 where the maximum is taken with respect to the given lexicographic order. We also define $\Lt(0) =\Lm(0) = 0$ and $\mathbf x^{\mathbf a}>0$ for all $\mathbf a\in (\mathbb Z_{\geq 0})^n$ and arbitrary monomial order $>$.
 
  The following lemma gives a division algorithm on $\mathbb C[x_1,\dots,x_n]$ with respect to a given monomial order, one may see \cite{adams1994groebner} for the proof. 

\begin{lemma}[Division Algorithm]\label{divisionalgo}
Let $(f_1,\dots,f_m)$ be an ordered $m$-tuple of nonzero polynomials in $A=\mathbb C[x_1,\dots,x_n]$ and  $f\in A$ be an arbitrary polynomial. Then there exist polynomials $q_1,\dots, q_n, r\in \mathbb C[x_1,\dots,x_n]$ such that 
\[f=q_1f_1+\dots+q_mf_m+r,\]
with   either $r=0$ or $r$ is a linear combination (with coefficients in $\mathbb C)$  of monomials, none of which is divisible by any of $\Lt(f_1),\dots,\Lt(f_m)$ and 
\[     \Lm(f) =  \max\{(\Lm(q_i)\cdot\Lm(f_i)), 1\leq i\leq m;\; \Lm(r)                   \}.                      \]
\end{lemma}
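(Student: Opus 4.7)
The plan is to construct the decomposition by an explicit iterative algorithm that mimics univariate long division, with the monomial order replacing the usual degree comparison. I would maintain a four-tuple $(p, q_1, \dots, q_m, r)$ initialized to $(f, 0, \dots, 0, 0)$ and preserve the invariant
\[
f = p + q_1 f_1 + \cdots + q_m f_m + r
\]
throughout. At each step I would inspect $\Lt(p)$. If $\Lt(f_i)$ divides $\Lt(p)$ for some index, choose the smallest such $i$, set $c := \Lt(p)/\Lt(f_i)$, and update $p \mapsto p - c f_i$ and $q_i \mapsto q_i + c$. Otherwise I would peel off the leading term by setting $r \mapsto r + \Lt(p)$ and $p \mapsto p - \Lt(p)$. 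The loop halts once $p = 0$, at which point the invariant yields the desired representation.

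To prove termination, I would note that in every step $\Lm(p)$ strictly decreases: in the first case the subtraction is designed so that $\Lt(p)$ cancels, and in the second case $\Lt(p)$ is removed outright. Since any monomial order is a well-ordering on $\mathbb{N}^n$, no infinite descending chain of monomials is possible, so the procedure stops after finitely many steps. Correctness of the identity $f = q_1 f_1 + \cdots + q_m f_m + r$ follows from the invariant at termination. The stated property of $r$ is built into the construction, because a monomial is only appended to $r$ when it is not divisible by any $\Lt(f_i)$.

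The nontrivial assertion is the leading monomial identity $\Lm(f) = \max\{\Lm(q_i)\Lm(f_i),\,\Lm(r)\}$. I would establish this by tracking, for each step, which monomial is being added either to some $q_i f_i$ or to $r$. In the first case the contribution to $q_i$ has leading monomial $\Lt(p)/\Lt(f_i)$, so the product $\Lm(q_i\text{-increment})\cdot\Lm(f_i)$ equals $\Lm(p)$; in the second case the contribution to $r$ has leading monomial $\Lm(p)$. Thus every monomial ever introduced on the right-hand side equals the current value of $\Lm(p)$ at that step, which by the termination argument is at most $\Lm(f)$ with equality at the very first step. Since distinct contributions within a single $q_i$ have strictly decreasing leading monomials (no cancellation occurs because leading terms are always eliminated as soon as they are produced), one obtains $\Lm(q_i) \cdot \Lm(f_i) \le \Lm(f)$ and $\Lm(r) \le \Lm(f)$, with equality attained by whichever action is taken at step one.

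The main obstacle I anticipate is the bookkeeping for the last identity: one must argue that the leading monomials assembled into each $q_i$ from separate iterations never cancel, so that $\Lm(q_i)$ is genuinely the largest among the single-step contributions rather than something smaller produced by accidental cancellation. This is handled by the observation above that the monomials $\Lt(p)/\Lt(f_i)$ produced across different iterations are all distinct (indeed strictly decreasing), guaranteeing that $\Lm(q_i)$ equals the maximum of these monomials and thus that $\Lm(q_i)\Lm(f_i)$ equals the corresponding value of $\Lm(p)$ at that iteration.
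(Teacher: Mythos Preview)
Your argument is correct and is precisely the standard proof of the multivariate division algorithm; the paper does not give its own proof but simply refers the reader to \cite{adams1994groebner}, where exactly this iterative procedure (with the same termination and bookkeeping arguments) appears. There is nothing to add: your handling of the leading-monomial identity via the strict decrease of $\Lm(p)$ across iterations is the right way to rule out cancellation in the $q_i$ and is what the cited reference does as well.
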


 Let $I\subset A$ be an ideal. A subset $G=\{g_1,\dots,g_m\} \subset I$ is said to be a {\em Gr\"obner basis} of $I$ with respect to the  order $>$ as above if  the following monomial ideals are equal:
 \[ \langle  \Lt(g_i), \,\,  1\leq i\leq n\rangle =  \langle \Lt(f), f\in I  \rangle. \]
 
 If $G$ is a Gr\"obner basis of $I$, then $I$ is generated by $g_1,\dots,g_m$, that is, $I =\langle  g_i, 1\leq i\leq m \rangle$. 
  
    Let $f,g\in A$ be two nonzero polynomials,  the {\em $S$-polynomial $S(f,g)$} is defined  to be 
 \[S(f,g) = \frac{\lcm(\Lm(f), \Lm(g))}{\Lt(f)}f  -\frac{\lcm(\Lm(f), \Lm(g))}{\Lt(g)}g,\] 
 where $\lcm$ is the least common multiple, $c_{\Lm(f)}$ and $c_{\Lm(g)}$ are the coefficients of leading terms in $f$ and $g$, respectively. 

 Buchberger gave a criterion to determine if a set of polynomials is a Gr\"obner basis by  using $S$-polynomials.
 
 \begin{proposition}[Buchberger's Criterion]
Let $I$ be an ideal generated by $g_1,\dots,g_m$, and let $S_{ij} := S(g_i,g_j)$ the $S$-polynomial. Then $\{g_1,\dots,g_m\}$ is a Gr\"obner basis for $I$ if and only if for all $i< j$, the remainder of the division of  $S_{ij}$ on  $g_1,\dots,g_n$ (with a given order)  is zero. 
\end{proposition}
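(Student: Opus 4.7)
The forward direction is the easier one. If $G = \{g_1,\dots,g_m\}$ is a Gr\"obner basis of $I$, then in particular $\langle \Lt(g_1),\dots,\Lt(g_m)\rangle = \langle \Lt(f) : f\in I\rangle$. Since each $S$-polynomial $S_{ij}$ lies in $I$ (it is a polynomial combination of $g_i$ and $g_j$), the standard property of the division algorithm from Lemma \ref{divisionalgo} together with the Gr\"obner basis property forces the remainder of dividing $S_{ij}$ by $g_1,\dots,g_m$ to be zero: otherwise $\Lt(r)$ would lie in $\langle \Lt(g_i)\rangle$, contradicting the fact that in the remainder $r$ no monomial is divisible by any $\Lt(g_i)$.

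For the backward (and main) direction, I would show that every nonzero $f\in I$ has $\Lm(f)$ divisible by some $\Lm(g_i)$. Write $f = \sum_{i=1}^m h_i g_i$ and set $\delta := \max_i \Lm(h_i)\Lm(g_i)$; choose the representation so that $\delta$ is as small as possible with respect to the monomial order. Clearly $\Lm(f)\leq \delta$. If $\Lm(f)=\delta$, then $\Lm(f)=\Lm(h_i)\Lm(g_i)$ for some $i$ and we are done. The task is to rule out $\Lm(f)<\delta$ using the vanishing of the $S$-polynomial remainders.

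The key step is a cancellation lemma: if $\Lm(f)<\delta$, let $T=\{i:\Lm(h_i)\Lm(g_i)=\delta\}$; then the coefficients in $T$ must satisfy $\sum_{i\in T}\Lt(h_i)\Lt(g_i)=0$, i.e.\ the top-degree monomials of the $h_ig_i$ $(i\in T)$ cancel. I will exploit the elementary fact that any syzygy among monomials $\mathbf x^{\mathbf a_i}$ with $\mathbf x^{\mathbf a_i}$ all equal to the same monomial $\delta$ can be expressed as a $\mathbb C$-linear combination of the elementary syzygies $e_i - e_j$, which correspond precisely to the leading-term part of the $S$-polynomials $S_{ij} = \tfrac{\delta}{\Lt(h_ig_i)}g_i - \tfrac{\delta}{\Lt(h_jg_j)}g_j$ (up to monomial rescaling). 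Using this, I rewrite $\sum_{i\in T}\Lt(h_i)g_i$ as a combination $\sum_{i<j,\,i,j\in T} c_{ij}\cdot (\mathrm{monomial})\cdot S_{ij}$.

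Now the hypothesis enters: each $S_{ij}$ has zero remainder upon division by $g_1,\dots,g_m$, so by Lemma \ref{divisionalgo} we can write $S_{ij} = \sum_k q^{(ij)}_k g_k$ with $\Lm(q^{(ij)}_k g_k) \leq \Lm(S_{ij}) < \delta/(\text{the monomial we multiplied by})$; substituting back into $f = \sum h_i g_i$ produces a new expression $f = \sum h_i' g_i$ whose associated maximum $\delta'$ is strictly smaller than $\delta$. This contradicts the minimality of $\delta$, so $\Lm(f)=\delta$ and the proof is complete. The main technical obstacle is precisely the bookkeeping in this cancellation step: one must verify carefully that the leading-monomial bound on $S_{ij}$, once multiplied by the appropriate monomial and distributed over the new representation, really produces a strict decrease in $\delta$. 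The standard way to make this rigorous is the syzygy-rewriting identity together with the fact that for monomial orders $\Lm(S_{ij})$ is strictly less than the common multiple $\lcm(\Lm(g_i),\Lm(g_j))$, which provides the needed strict inequality.
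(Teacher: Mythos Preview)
The paper does not supply its own proof of Buchberger's Criterion; it is stated as a classical result and the reader is implicitly referred to the textbook \cite{adams1994groebner}. Your argument is the standard textbook proof and is essentially correct: the forward direction via the uniqueness of zero remainders, and the backward direction via choosing a representation $f=\sum h_ig_i$ with minimal $\delta=\max_i\Lm(h_ig_i)$, then using the vanishing $S$-remainders to rewrite the top-degree part and force a strict drop in $\delta$. The syzygy step you describe is exactly the right mechanism, and the strict inequality $\Lm(S_{ij})<\lcm(\Lm(g_i),\Lm(g_j))$ (since the leading terms cancel by design) is indeed what makes the new $\delta'$ strictly smaller after multiplying by the monomial $\delta/\lcm$. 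One small point worth tightening in a full write-up: your sentence ``rewrite $\sum_{i\in T}\Lt(h_i)g_i$ as a combination of monomial multiples of $S_{ij}$'' is an exact identity, not merely an approximation modulo lower terms, once you normalize by the leading coefficients of the $g_i$; making this explicit removes any ambiguity in the bookkeeping you flag at the end.
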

 
Let  $\pi_k: \mathbb C^n \rightarrow  \mathbb C^{n-k}$ be the projection mapping $(a_1, \dots,a_n)$ to  $(a_{k+1},\dots,a_n)$.  The following lemma about elimination on Gr\"obner basis is well-known.

\begin{lemma}[Elimination on Gr\"obner basis]\label{eligro}  Let $I\subset  \mathbb C[x_1,\dots,x_n]$ be an ideal and $G=\{g_1,\dots,g_m\}\subset I$ be a Gr\"obner basis of $I$ with respect to the order $x_1>\dots>x_n$. Then  $G\cap \mathbb C[x_{k+1},\dots,x_n]$  is a  Gr\"obner basis
	of     $I\cap \mathbb C[x_{k+1},\dots,x_n]$.
	
	  In particular, let $S\subset \mathbb C^n$ be a subset,  and  $G$ is a Gr\"obner basis of $\I(S)$ with respect to the order $x_1>\dots>x_n$,   then $G\cap \mathbb C[x_{k+1},\dots,x_n]$ is  a Gr\"obner basis of  $\I(\pi_k(S))$.
\end{lemma}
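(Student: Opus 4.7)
The plan is to break the lemma into two parts and tackle them in order: first the general statement that $G_k := G\cap \mathbb{C}[x_{k+1},\dots,x_n]$ is a Gr\"obner basis of the elimination ideal $I_k := I\cap \mathbb{C}[x_{k+1},\dots,x_n]$, and then the specialization to $I = \I(S)$.

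For the first part, I would isolate the following observation as the key lemma, which is specific to the lexicographic order with $x_1>\dots>x_n$: if $h\in\mathbb{C}[x_1,\dots,x_n]$ is nonzero and $\Lm(h)\in\mathbb{C}[x_{k+1},\dots,x_n]$, then in fact $h\in\mathbb{C}[x_{k+1},\dots,x_n]$. This holds because any monomial involving at least one of $x_1,\dots,x_k$ is strictly greater, in the chosen lex order, than every monomial in the variables $x_{k+1},\dots,x_n$; so if such a monomial appeared with nonzero coefficient in $h$ it would exceed $\Lm(h)$, a contradiction. With this in hand, the Gr\"obner basis property for $G_k$ follows quickly: $G_k\subset I_k$ is trivial, and given any nonzero $f\in I_k$, the Gr\"obner basis property of $G$ in $I$ produces some $g\in G$ with $\Lm(g)\mid\Lm(f)$. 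Since $\Lm(f)\in\mathbb{C}[x_{k+1},\dots,x_n]$, so is $\Lm(g)$, hence by the observation $g\in\mathbb{C}[x_{k+1},\dots,x_n]$, i.e. $g\in G_k$. Thus $\langle\Lt(g):g\in G_k\rangle$ contains $\Lt(f)$ for every $f\in I_k$, which is precisely the Gr\"obner basis condition.

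For the second (``in particular'') part, the reduction to the first part is purely set-theoretic: I would show
\[
\I(S)\cap\mathbb{C}[x_{k+1},\dots,x_n] \;=\; \I(\pi_k(S)).
\]
The inclusion ``$\supseteq$'' is immediate since any $f\in\I(\pi_k(S))$ vanishes on $\pi_k(S)$ and only depends on $x_{k+1},\dots,x_n$, hence vanishes on $S$ as well. The inclusion ``$\subseteq$'' is equally direct: if $f\in\mathbb{C}[x_{k+1},\dots,x_n]$ vanishes on $S$, then for any $(b_{k+1},\dots,b_n)\in\pi_k(S)$ we pick a preimage $(a_1,\dots,a_k,b_{k+1},\dots,b_n)\in S$ and read off $f(b_{k+1},\dots,b_n)=0$. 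Applying the first part of the lemma to $I=\I(S)$ then gives the claim.

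I do not expect a serious obstacle here; the only subtle point is that the argument crucially uses the lex order (as opposed to an arbitrary monomial order), and this enters exactly through the ``key observation'' above. Everything else is formal bookkeeping.
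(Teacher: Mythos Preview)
Your proposal is correct and follows the same approach as the paper. The paper simply cites the first part (the elimination theorem for Gr\"obner bases) as well-known from \cite{adams1994groebner}, and for the second part reduces to the first via the identity $\I(\pi_k(S)) = \I(S)\cap\mathbb C[x_{k+1},\dots,x_n]$; you have supplied the standard proof of the first part explicitly and then carried out the same reduction.
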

\begin{proof}
  The first statement on elimination is well-known, see \cite{adams1994groebner}.  
  For the second statement, it follows from the first statement and the easy observation   \[\I(\pi_k(S)) = \I(S) \cap \mathbb C[x_{k+1},\dots,x_n].\]
  This completes the proof.
  \end{proof}

\subsection{Linear recurrences}
Let us  briefly recall some facts on linear recurrences. Let $K$ be a field, for example, $K= \mathbb Q,\; \mathbb C$ or $\mathbb Q(x_1,\dots,x_n)$.
A sequence $(a_j)_{j\in \mathbb{N}}$ of elements in $K$ is said to satisfy a {\em  linear recurrence}  if there is a positive integer  $k$ and elements  $c_0, \dots, c_{k}\in K$ such that $c_ka_{j+k} + c_{k-1}a_{j+k-1} + \dots + c_1a_{j+1} +c_0a_j  = 0$ holds for all $j\in \mathbb{N}$. The polynomial $p(x) = c_kx^k +  c_{k-1}x^{k-1} + \dots +c_1x+ c_0\in K[x]$ is called the {\em characteristic polynomial} of this sequence.  

The set $I$ of  characteristic polynomials of all linear recurrences  satisfied by  a  sequence  $(a_j)_{j\in \mathbb N}$ is an ideal, and  if $I$ is  non-zero,  it is generated by a non-trivial monic polynomial of minimal degree in $I$, which is called the {\em minimal characteristic polynomial}.  

In this paper, we will take $K=\mathbb C$ if not specified.

\begin{lemma}\label{lr}
	Let  $p(x) = x^k +  c_{k-1}x^{k-1} + \dots +c_1x+ c_0 \in \mathbb{C}[x]$ be a polynomial  with $c_0\neq 0$. Factor $p(x)$ over  $\mathbb{C}$ as 
	\begin{equation}\label{charpoly} p(x)  =    (x-r_1)^{m_1}(x-r_2)^{m_2} \dots (x-r_l)^{m_l}, \end{equation}
	where $r_1, r_2, \dots, r_l$ are distinct nonzero complex numbers, and $m_1, m_2, \dots, m_l$ are positive integers.  Then a sequence $(a_j)_{j\in \mathbb{N}}$  satisfies the linear recurrence with characteristic polynomial $p(x)$ if and only  if there are polynomials  $g_1(x), g_2(x), \dots, g_l(x)\in\mathbb{C}[x]$ with $\text{deg}(g_i)\leq m_i-1$ for $i = 1, 2, \dots, l$ such that for all $j\in \mathbb{N}$, one has  
	\[a_j  =   g_1(j) r_1^j + g_2(j)r_2^j+\dots+g_l(j)r_l^j .\]
	
	In particular, if $p(x)$ has no multiple roots, then $(a_j)_{j\in \mathbb{N}}$  satisfies the linear recurrence with characteristic polynomial $p(x)$ if and only  if there are  complex numbers $b_1,\dots,b_k$ (not depending on $j$) such that for all $j\in \mathbb{N}$, one has  
	\[a_j  =    b_1r_1^j +  b_2r_2^j+\dots  +b_kr_k^j .\]
\end{lemma}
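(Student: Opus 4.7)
The plan is to work with the \emph{shift operator} $E$ acting on the space of $\mathbb C$-valued sequences by $(E\mathbf a)_j := a_{j+1}$. A sequence $(a_j)_{j\in\mathbb N}$ satisfies the linear recurrence with characteristic polynomial $p(x)$ if and only if $p(E)\mathbf a = 0$. Setting $V := \ker p(E)$, the lemma reduces to exhibiting an explicit basis of $V$ given by the sequences $u^{(i,t)}_j := j^t r_i^j$ for $1 \leq i \leq l$ and $0 \leq t \leq m_i-1$; then any $\mathbf a\in V$ expands uniquely as $a_j = \sum_i g_i(j)\, r_i^j$ with $\deg g_i \leq m_i-1$, which is the desired form.

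First, since $p$ is monic of degree $k$, the recurrence expresses $a_{j+k}$ as a $\mathbb C$-linear function of $a_j,\dots,a_{j+k-1}$. Thus the evaluation map $V \to \mathbb C^k$, $\mathbf a \mapsto (a_0,\dots,a_{k-1})$, is a $\mathbb C$-linear isomorphism, so $\dim_{\mathbb C} V = k = \sum_i m_i$. Second, each $u^{(i,t)}$ lies in $V$: the factors of $p(E) = \prod_l (E-r_l)^{m_l}$ commute, so it suffices to show $(E-r_i)^{m_i} u^{(i,t)} = 0$ for $t\leq m_i-1$. A direct computation gives $(E-r_i)(j^s r_i^j) = r_i\bigl((j+1)^s - j^s\bigr)r_i^j$, which is $r_i^{j+1}$ times a polynomial in $j$ of degree $s-1$; iterating, $(E-r_i)^{s+1}(j^s r_i^j)=0$, and the assumption $t\leq m_i-1$ yields the claim.

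Third, I need to show the $k$ sequences $u^{(i,t)}$ are linearly independent; this will force them to span $V$ by the dimension count. Suppose $\sum_{i,t} c_{i,t}\, u^{(i,t)} = 0$, equivalently $\sum_i g_i(j)\, r_i^j = 0$ for all $j\geq 0$ with $g_i(x) = \sum_t c_{i,t} x^t\in\mathbb C[x]$ of degree at most $m_i-1$. This is the main obstacle of the proof; the cleanest route is a confluent Vandermonde argument: the $k\times k$ matrix with rows indexed by $j=0,\dots,k-1$ and columns indexed by pairs $(i,t)$, entry $j^t r_i^j$, has determinant a nonzero scalar multiple of $\prod_{i<i'}(r_{i'}-r_i)^{m_i m_{i'}}$, which is nonzero because the $r_i$ are distinct and nonzero. (If one wishes to avoid invoking the confluent version, one may instead argue by induction on $l$: multiply the relation by $r_1^{-j}$ and apply the difference operator $E - r_i/r_1$ enough times to eliminate one summand at a time, reducing to the simple-root Vandermonde identity.) This yields $g_i=0$ for all $i$, hence independence.

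Combining these three steps, every $\mathbf a\in V$ admits a unique representation $a_j = \sum_i g_i(j)\, r_i^j$ with $\deg g_i \leq m_i-1$, which establishes both directions of the equivalence. The ``in particular'' clause follows by specializing to the case $m_i=1$ for all $i$, where each $g_i$ is forced to be a constant and the argument reduces to the classical (non-confluent) Vandermonde determinant $\prod_{i<i'}(r_{i'}-r_i)\neq 0$.
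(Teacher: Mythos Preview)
Your argument is correct. The paper itself states Lemma~\ref{lr} without proof, treating it as a classical fact about linear recurrences, so there is nothing to compare against on the paper's side.

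Your route via the shift operator $E$, the dimension count $\dim\ker p(E)=k$, and the confluent Vandermonde determinant is the standard one and all three steps are sound. Two small remarks: in step~3 your computation $(E-r_i)(j^sr_i^j)=r_i\bigl((j+1)^s-j^s\bigr)r_i^j$ shows the result lies in the span of $\{j^tr_i^j:t\le s-1\}$, which is exactly what the induction needs; and in step~4 the ``nonzero scalar'' relating your matrix to the usual confluent Vandermonde is $\prod_i r_i^{\binom{m_i}{2}}$, coming from the column change $j^t r_i^j \leftrightarrow \binom{j}{t}r_i^{j-t}$, so the hypothesis $c_0\neq 0$ (hence $r_i\neq 0$) is indeed used here and not only in the Vandermonde factor itself. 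The alternative inductive argument you sketch (peeling off one root at a time via repeated differencing) also works and avoids quoting the confluent determinant formula.
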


Let us concentrate on the following special case of linear recurrences.

\begin{lemma}\label{lr3}
	Suppose that $(a_j)_{j\in \mathbb{N}}$ satisfies the linear recurrence with characteristic polynomial $p(x) = x^2  - cx +1$. We have that  
	\begin{enumerate}
		\item\ if $c\notin \{2,-2\}$,  there exists a Laurent polynomial of the form  $f(x) = b_1x^{-1}+ b_2x$ and a complex number $\rho\notin \{1,-1,0\}$  such that  $a_j = f(\rho^j)$   for all $j\in \mathbb{N}$;
		\item\ if $c=2$, there is a linear polynomial $f(x) = b_0+ b_1x$ such that  $a_j = f(j)$ for all $j\in \mathbb{N}$;
		\item if $c=-2$, there is a linear  polynomial $f(x) = b_0+b_1x$ such that  $a_j = f(j)$ for all even numbers $j\in \mathbb{N}$,  and  $a_j = -f(j)$   for all odd numbers $j\in \mathbb{N}$.
	\end{enumerate}
\end{lemma}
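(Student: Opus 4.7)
The plan is to reduce the three cases to a direct application of Lemma \ref{lr} by analyzing the roots of $p(x)=x^2-cx+1$. The two roots $\rho_1,\rho_2$ are determined by Vieta's relations $\rho_1\rho_2=1$ and $\rho_1+\rho_2=c$, so in particular both roots are nonzero, and $\rho_2=\rho_1^{-1}$. The discriminant is $c^2-4$, so the roots coincide precisely when $c\in\{2,-2\}$, corresponding to a double root at $1$ or $-1$ respectively.

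First, suppose $c\notin\{2,-2\}$. Then $p(x)$ has two distinct roots $\rho$ and $\rho^{-1}$, and neither of these equals $1$ or $-1$ (since $\rho=1$ would force $c=1+1=2$ and $\rho=-1$ would force $c=-1-1=-2$). By Lemma \ref{lr}, there exist $b_1,b_2\in\mathbb C$ such that
\[ a_j = b_1\rho^{-j}+b_2\rho^j \qquad \text{for all } j\in\mathbb N. \]
Setting $f(x)=b_1x^{-1}+b_2x$ gives $a_j=f(\rho^j)$ with $\rho\notin\{0,1,-1\}$, as required.

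Next, if $c=2$, then $p(x)=(x-1)^2$ has a double root at $r=1$ with multiplicity $m=2$. By Lemma \ref{lr}, there exists a polynomial $g(x)\in\mathbb C[x]$ of degree at most $1$ with $a_j=g(j)\cdot 1^j=g(j)$. Writing $g(x)=b_0+b_1x$ and taking $f=g$ gives the statement. If $c=-2$, then $p(x)=(x+1)^2$ has a double root at $r=-1$ with multiplicity $m=2$. Again Lemma \ref{lr} yields a polynomial $g(x)=b_0+b_1x$ with
\[ a_j = g(j)\cdot(-1)^j \qquad \text{for all } j\in\mathbb N, \]
so putting $f(x)=b_0+b_1x$ gives $a_j=f(j)$ when $j$ is even and $a_j=-f(j)$ when $j$ is odd.

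There is essentially no obstacle here: every assertion is a direct specialization of the structure theorem for solutions of linear recurrences recorded in Lemma \ref{lr}. The only point that requires a small argument is the exclusion $\rho\notin\{0,1,-1\}$ in part (1), which is settled by the Vieta observation above.
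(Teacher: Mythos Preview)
Your proof is correct and follows essentially the same approach as the paper: both arguments factor $p(x)$ in each case and invoke Lemma \ref{lr} directly. Your version is slightly more explicit about why $\rho\notin\{0,1,-1\}$ via Vieta's relations, but the structure and content are the same.
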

\begin{proof}
	If $c\notin \{2, -2\}$, then $p(x) = (x-\rho)(x-\frac{1}{\rho})$ with some  complex number $\rho \notin \{0, 1, -1\}$.  By Lemma \ref{lr}, there are constants $b_1, b_2$ such that $a_j  =    b_1{\rho}^{-j} +  b_2{\rho}^j$ holds for all $j\in \mathbb{N}$. Let  $f(x) = b_1x^{-1} + b_2 x$,  then $a_j =f(\rho^j)$ for  all $j\in \mathbb{N}$.
	
	If $c=2$, then $p(x) = (x-1)^2$.  By Lemma \ref{lr},  there  is a polynomial $f(x)$ with $\text{deg}(f) \leq 1$ such that  for all $j\in \mathbb{N}$, $a_j = f(j)$.
	
	If $c=-2$, then $p(x) = (x+1)^2$.    By Lemma \ref{lr},  there is a  a polynomial $f(x)$ with $\text{deg}(f) \leq 1$ such that  for all $j\in \mathbb{N}$, $a_j = f(j)(-1)^j$.
\end{proof}

Let $(a_n)_{n\in \mathbb Z}$ be a sequence of complex numbers. Suppose that there is an positive integer $m$ and some  complex numbers $c_0,\dots, c_{k-1}$ with $c_0\neq 0$  such  that 
\[a_{n+km} + c_{k-1}a_{n+(k-1)m} + \dots + c_1a_{n+m} +c_0a_n  = 0\]
holds for all  $n\in \mathbb Z$.
Then the sequences $(a_{t+jm})_{j\in \mathbb N}$ ($t\in\mathbb Z$) satisfy the 
linear recurrence and share a common characteristic polynomial given by 
\[p(x) = x^k +  c_{k-1}x^{k-1} + \dots +c_1x+ c_0 = (x-r_1)^{m_1}(x-r_2)^{m_2} \dots (x-r_l)^{m_l} \]
for all  $t\in \mathbb Z$. By Lemma \ref{lr}, there are polynomials 
$g_{i,t}(x)$ ($1\leq i\leq m$) such that 
\[a_{t+jm} =  g_{1,t}(j) r_1^j + g_{2,t}(j)r_2^j+\dots+g_{l,t}(j)r_l^j  \]
for all $j\in \mathbb{N}$. Here  the roots $r_1,\dots,r_l$ are independent of the choice of $t\in \mathbb Z$. 

If in addition, assume that  $p(x)$ is of the form $p(x)= x^{2m}-cx^m+1$. 
That is, the sequence $(a_n)_{n\in \mathbb Z}$ satisfies that
\[   a_{n+2m} -ca_{n+m} +a_n =0  \]
holds for all  $n\in \mathbb Z$.  For each $t\in \mathbb Z$, the sub-sequence $(a_{t+jm})_{j\in \mathbb Z}$ satisfies a linear recurrence with characteristic polynomial  given by 
\[f(x) = x^2- cx+1.\] 
If $c\neq \pm2$, there is a complex number $\rho \notin \{1,-1,0\}$
such that for each $t\in \mathbb{Z}$, there is a Laurent polynomial $f_t(x)$ such that  $a_{t+jm} = f_t(\rho^j)$ for all $j\in \mathbb N$.  If $c= 2$,  for each $t\in \mathbb{Z}$, there is a  polynomial $f_t(x)$ such that  $a_{t+jm} = f_t(j)$ for all $j\in \mathbb N$.   If $c= -2$,  for each $t\in \mathbb{Z}$, there is a  polynomial $f_t(x)$ such that  $a_{t+jm} = f_t(j)$ for all even $j\in \mathbb N$ and $a_{t+jm} = -f_t(j)$ for all odd $j\in \mathbb N$.  

\section{Generalized frieze varieties}\label{fv}
In this section, we define and study generalized frieze varieties for  quivers associated to a cluster automorphism, and we explain that it extends the definition of (generalized) frieze variety defined in  \cite{lee2020frieze, igusa2021frieze}.

\subsection{Generalized frieze varieties associated to cluster automorphisms}
Let $\mathcal{A} =\mathcal A(\mathbf x, Q)$ be a cluster algebra, 
and  $f: \mathcal{A} \rightarrow \mathcal{A}$   a cluster automorphism. For any $t\in \mathbb{N}$,  the cluster $\mathbf x_t  = (x_{1,t}, \dots, x_{n,t})$ is given by $f^t(\mathbf x)$. Notice that the cluster variables $x_{i,t}$ are Laurent polynomials of $\mathbf x$ with positive coefficients.  For $\mathbf a\in (\mathbb C^*)^n$, we will write $\mathbf x_t(\mathbf a) := \mathbf x_t|_{\mathbf x = \mathbf a} \in \mathbb C^n$, and we also use the notation $f^t(\mathbf a)$ to denote  $\mathbf x_t(\mathbf a)$.

\begin{definition}\label{defgfv}
	Let  $\mathcal A$, $f$ and $\mathbf x_t$ be given as above.
	\begin{enumerate}
		\item  A point $\mathbf a \in (\mathbb C^*)^n$ is called a {\em general specialization}\footnote{Igusa and Schiffler call a general specialization a generic specialization in \cite{igusa2021frieze}.  We would like to distinguish it from generic points in topology,  since a generic point of a variety  is usually defined to be a point which is dense in that variety.} of $(\mathbf x, Q)$ with respect to $f$ (or simply $(Q,f)$), if  $\mathbf x_t(\mathbf a) \in (\mathbb C^*)^n$ for all $t\geq 0$.
		\item For  $\mathbf a\in (\mathbb C^*)^n$, we define the {\em generalized frieze variety}  $X(Q, f, \mathbf a)$  of $(\mathbf x, Q)$ (or simply $\mathbf x$ or $Q$)  with respect to $f$ and $\mathbf a$ to be the Zariski closure of  the set  \[\{\mathbf x_t(\mathbf a), t\geq 0\}\subset \mathbb C^n.\]
	\end{enumerate}
\end{definition}

Even though the generalized frieze variety can be defined  for every $\mathbf a\in  (\mathbb C^*)^n$,  we mainly consider the generalized frieze variety for a general specialization in this paper.  

By Theorem \ref{laurentpositivity}, it is clear that  the set of general points  is a nonempty  set  containing $\mathbb R_{>0}^n$. In particular,
the vector $\mathbf 1 = (1,1,\dots, 1)$ is a general specialization for every cluster and every cluster automorphism. In this case, we simply use $X(Q,f)$  to denote $ X(Q, f, \mathbf 1)$ and call it the {\em frieze variety with respect to $f$}.  

We also notice that the cluster algebras $\mathcal A(\mathbf x, Q)$ and $\mathcal A(\mathbf x, Q^{\op})$ are equal and share the same clusters, so we usually do not distinguish $X(Q,f,\mathbf a)$ and $X(Q^{\op},f,\mathbf a)$.

\begin{lemma}[\cite{igusa2021frieze}]\label{atob}
	Let $\mathbf a, \mathbf b\in  (\mathbb C^*)^n$ be two general specializations of $Q$ with respect to  the cluster automorphism $f$.  If $\mathbf a\in X(Q,f,\mathbf b)$, then we have that 
	\[
	X(Q,f,\mathbf a)\subset X(Q,f,\mathbf b).
	\]
\end{lemma}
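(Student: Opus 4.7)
The plan is to establish the inclusion pointwise: I will show that $f^s(\mathbf a)\in X(Q,f,\mathbf b)$ for every $s\geq 0$, and then take Zariski closure on the left to conclude $X(Q,f,\mathbf a)\subset X(Q,f,\mathbf b)$. The workhorse is Lemma \ref{buqueding}(2), applied to $f^s$ regarded as a rational self-map of $\mathbb C^n$.

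Concretely, I would first view $f^s$ as a rational map $\mathbb C^n\dashrightarrow \mathbb C^n$, represented by a pair $(U_s,f^s)$ where $U_s$ is the open complement of the zero loci of the Laurent denominators of the cluster variables $x_{i,s}$ (such Laurent expressions exist by Theorem \ref{laurentpositivity}). In particular $U_s\supset (\mathbb C^*)^n$. Because $\mathbf a$ and $\mathbf b$ are general specializations, all the points $\mathbf a,\mathbf b, f(\mathbf b), f^2(\mathbf b),\ldots$ lie in $(\mathbb C^*)^n$ and hence in the domain $U_s$.

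Then, fixing $s$, I would set $S_1=\{f^t(\mathbf b):t\geq 0\}$ and $S_2=S_1\cup\{\mathbf a\}$; both sit inside $U_s$. The hypothesis $\mathbf a\in X(Q,f,\mathbf b)=\overline{S_1}$ gives $\overline{S_2}=\overline{S_1}$, so Lemma \ref{buqueding}(2) yields $\overline{f^s(S_2)}=\overline{f^s(S_1)}$. But $f^s(S_1)=\{f^{s+t}(\mathbf b):t\geq 0\}\subset S_1$, so this common closure lies in $X(Q,f,\mathbf b)$; since $f^s(\mathbf a)\in f^s(S_2)$, we conclude that $f^s(\mathbf a)\in X(Q,f,\mathbf b)$ as desired. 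Letting $s$ vary over $\mathbb N$ and closing up gives the claimed containment.

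The only delicate point is the bookkeeping of the domain of definition of $f^s$ as a rational map, but this is routine: the Laurent-polynomial form of the cluster variables together with the general-specialization assumption guarantees that every point fed into $f^s$ lies in $U_s$, so Lemma \ref{buqueding}(2) applies without further fuss. No representation-theoretic input on $Q$ or $f$ is needed beyond the fact that $f$ transports clusters to clusters, which is built into the definition of a cluster automorphism.
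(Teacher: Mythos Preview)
Your argument is correct and follows essentially the same line as the paper's: both reduce the claim to showing $f^s(\mathbf a)\in X(Q,f,\mathbf b)$ for all $s\geq 0$, using that $f^s$ is a rational map defined on $(\mathbb C^*)^n$ by the Laurent phenomenon. The only cosmetic difference is that the paper reduces to the case $s=1$ and inducts (since each $f^j(\mathbf a)$ is again a general specialization), deferring the $s=1$ step to the method of \cite{igusa2021frieze}, whereas you handle all $s$ at once via Lemma~\ref{buqueding}(2); your route is slightly more self-contained within the paper's own toolkit.
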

\begin{proof}
	For $j\in \mathbb N$, let
\[\mathbf a_j = f^j(\mathbf x)|_{\mathbf x=\mathbf a}\in (\mathbf C^*)^n.\]
It is clear that $\mathbf a_j$'s are general specializations for $Q$ with respect to $f$, thus it is enough to prove that $\mathbf a_1\in X(Q,f,\mathbf b)$. The proof is similar to that given in [Lemma 3.1, \cite{igusa2021frieze}]. 
\end{proof}

The following result proved by Igusa and Schiffler in \cite{igusa2021frieze} shows that irreducible components of generalized frieze varieties are permuted by the cluster automorphism.
\begin{lemma}[\cite{igusa2021frieze}]\label{isinvariant}
	Let $\mathbf a$ be a general specialization for $Q$ with respect to $f$ and $X=X(Q,f,\mathbf a)$ the generalized frieze variety.  Then there is an integer $m\geq 1$
such that  $X=X_0\cup \dots \cup X_{m-1}$, where $X_i = X(Q, f^m,f^i(\mathbf a))$ for $i\in [0,m-1]$ satisfies the following conditions:
\begin{enumerate}
	\item[(i)] $X_i$ is irreducible,  $0\leq i\leq m-1$;
	\item[(ii)] $X_i\nsubseteq X_j$ for $i\neq j$;
	\item[(iii)] for each $j\in \mathbb N$, $f^j(\mathbf a)$ only  lies in one $X_i$ such that $j\equiv i$ modulo $								m$;
   \item[(iv)] $\dim X = \dim X_0= \dots=\dim X_{m-1}$.
	\end{enumerate}
	\end{lemma}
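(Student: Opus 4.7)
The plan is to use the cluster automorphism $f$ to induce an action on the finitely many irreducible components of $X$ and then show that the sequence $P_t:=f^t(\mathbf a)$ cycles periodically through them.

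By the Noetherian property of $\mathbb C^n$, $X$ has finitely many irreducible components $Y_0,\ldots,Y_{r-1}$. Since both $f$ and $f^{-1}$ are cluster automorphisms, the Laurent phenomenon (Theorem~\ref{laurentpositivity}) implies that $f$ restricts to a regular automorphism of the torus $(\mathbb C^*)^n$, hence is a Zariski homeomorphism there. As $\mathbf a$ is a general specialization, the orbit $S=\{P_t:t\ge 0\}$ lies in $(\mathbb C^*)^n$, and because $f(S)\subseteq S$ we obtain $f(X\cap(\mathbb C^*)^n)\subseteq X\cap(\mathbb C^*)^n$. A direct check using that $f(S)$ is cofinite in $S$, together with the closedness of $f$ on $(\mathbb C^*)^n$, gives the reverse inclusion, so $f$ is in fact a bijection of $X\cap(\mathbb C^*)^n$ onto itself. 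This forces $f$ to permute the irreducible components of $X$: for each $k$, the closure of $f(Y_k\cap(\mathbb C^*)^n)$ is irreducible and equal to some $Y_{\sigma(k)}$, defining a permutation $\sigma$ of $\{0,\ldots,r-1\}$.

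Next I would note that each positive-dimensional $Y_k$ meets $S$ in a dense subset: otherwise $\overline{S\cap Y_k}$ would be a proper closed subset of $Y_k$, yet $X=\overline{S}=\bigcup_k\overline{S\cap Y_k}$ combined with the irreducibility of $Y_k$ forces $Y_k\subseteq\overline{S\cap Y_\ell}\subseteq Y_\ell$ for some $\ell$, whence $k=\ell$. Let $Y_{k_0}$ be a component containing $\mathbf a$ and let $m\ge 1$ be the size of its $\sigma$-orbit. Then $\sigma^m$ fixes $\sigma^i(k_0)$ for every $i$, so for each $i\in\{0,\ldots,m-1\}$ the sub-orbit $S_i:=\{P_{jm+i}:j\ge 0\}$ lies entirely inside the single component $Y_{\sigma^i(k_0)}$. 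Setting $X_i:=\overline{S_i}=X(Q,f^m,f^i(\mathbf a))$, each $X_i$ is therefore irreducible, giving (i), and $X=X_0\cup\cdots\cup X_{m-1}$ with $f^j(\mathbf a)\in X_{j\bmod m}$, giving (iii). Since $f^i$ restricts to a birational map $X_0\dashrightarrow X_i$ via $P_{jm}\mapsto P_{jm+i}$, Lemma~\ref{buqueding} yields (iv). Finally, (ii) follows because the components $Y_{\sigma^i(k_0)}$ are distinct for $0\le i\le m-1$, so $X_i\subseteq X_j$ with $i\neq j$ would force $Y_{\sigma^i(k_0)}\subseteq Y_{\sigma^j(k_0)}$, contradicting irreducibility.

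The main obstacle I anticipate is the careful handling of points lying in the intersection of two or more irreducible components, which could a priori make the ``component visited at time $t$'' multivalued and obstruct both the definition of $\sigma$ and the uniqueness clause in (iii). My plan to overcome this is to work systematically on the open dense subset $X\cap(\mathbb C^*)^n$, where $f$ is a genuine algebraic isomorphism, and to exploit the density of $S\cap Y_k$ in each $Y_k$, so that $\sigma$ is detected by \emph{generic} behavior rather than boundary coincidences.
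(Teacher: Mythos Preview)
The paper does not prove this lemma itself; it is cited from \cite{igusa2021frieze}. Your overall strategy—let $f$ permute the irreducible components of $X$ and take $m$ to be the length of the cycle through the component containing $\mathbf a$—is the correct one and matches the cited argument.

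There is, however, a concrete error in your execution. The claim that ``$f$ restricts to a regular automorphism of the torus $(\mathbb C^*)^n$'' is false: the Laurent phenomenon makes each $f(x_i)$ regular on the torus, but does not prevent it from vanishing there (already for the Kronecker quiver, $f_c(x_1)=(x_2^2+1)/x_1$ vanishes along $x_2=\pm\mathrm{i}$). Hence $f$ need not carry $(\mathbb C^*)^n$ into itself, is not a homeomorphism of the torus, and your subsequent appeals to ``closedness of $f$ on $(\mathbb C^*)^n$'' and to $f$ being a ``bijection of $X\cap(\mathbb C^*)^n$ onto itself'' are unsupported. The repair is already available in the paper: treat $f$ and $f^{-1}$ as mutually inverse birational self-maps of $\mathbb C^n$, both regular on the torus, and invoke Lemma~\ref{buqueding}. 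Since $S=\{P_t:t\ge0\}$ is dense in $X$ and $f(S)\subseteq S$, part~(2) of that lemma gives $\overline{f(Y_k\cap(\mathbb C^*)^n)}\subseteq X$ for every component $Y_k$; applying the same reasoning to $f^{-1}$ shows that $Y_k\mapsto\overline{f(Y_k\cap(\mathbb C^*)^n)}$ is a permutation of the components, with dimensions preserved by part~(1).

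A second, smaller gap: for (iii) you verify $P_j\in X_{j\bmod m}$ but not the ``only'' clause, which you yourself flag as the main obstacle. Once (i), (ii) and (iv) are established, the missing step is short: if $P_j\in X_k$ with $k\ne i:=j\bmod m$, then since $P_j$ is a general specialization for $(Q,f^m)$, Lemma~\ref{atob} applied to $f^m$ yields $X(Q,f^m,P_j)\subseteq X_k$; but $X(Q,f^m,P_j)=X_i$ by irreducibility of $X_i$ together with the dimension equality in (iv), so $X_i\subseteq X_k$, contradicting (ii).
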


We have the following results.

\begin{proposition}\label{inverse}
	Let $\mathbf a$ be a general specialization for  $(Q, f)$. Then  for any $j\in \mathbb N$, 
	\[       X(Q, f, \mathbf a) =  X(Q,f, f^j(\mathbf a)).   \]
	If moreover $\mathbf a$ is also a general specialization for  $(Q, f^{-1})$, then 
	\[       X(Q, f, \mathbf a) =  X(Q,f^{-1}, \mathbf a).   \]
 \end{proposition}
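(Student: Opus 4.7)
The plan is to prove the two statements in sequence, with Lemma~\ref{atob} and Lemma~\ref{isinvariant} as the main inputs. For the first equality, write $X:=X(Q,f,\mathbf a)$ and $Y_j:=X(Q,f,f^j(\mathbf a))$. Since $f^j(\mathbf a)\in X$ directly from the definition, Lemma~\ref{atob} already gives $Y_j\subseteq X$. For the reverse inclusion, I would apply Lemma~\ref{isinvariant} to decompose $X=X_0\cup\cdots\cup X_{m-1}$, where $X_i=X(Q,f^m,f^i(\mathbf a))$ is the Zariski closure of $\{f^{mt+i}(\mathbf a):t\geq 0\}$. The shifted orbit $\{f^{j+s}(\mathbf a):s\geq 0\}$ meets each $X_i$ in a set of the form $\{f^{mt+i}(\mathbf a):t\geq T_i\}$ for some integer $T_i\geq 0$. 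What remains is to check that this tail is still Zariski-dense in $X_i$: since $X_i$ is irreducible by Lemma~\ref{isinvariant}(i), if the closure of the tail were a proper closed subset, then $X_i$ would be a union of a proper closed subset and a finite set, contradicting irreducibility. Hence $X_i\subseteq Y_j$ for every $i$, and therefore $X\subseteq Y_j$.

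For the second equality, the key observation is that under the hypotheses, $f^{-t}(\mathbf a)$ is a general specialization for $(Q,f)$ for every $t\geq 0$: indeed $f^s(f^{-t}(\mathbf a))=f^{s-t}(\mathbf a)$ lies in $(\mathbb{C}^*)^n$ for every $s\geq 0$, since the cases $s\geq t$ and $s<t$ are covered by the generality of $\mathbf a$ for $f$ and $f^{-1}$, respectively. I would then show by induction on $t$ that $f^{-t}(\mathbf a)\in X$. The case $t=0$ is immediate. For the inductive step, apply part~(1) to the general specialization $f^{-(t+1)}(\mathbf a)$ with $j=1$ to obtain
\[
X(Q,f,f^{-(t+1)}(\mathbf a))=X(Q,f,f^{-t}(\mathbf a));
\]
since $f^{-t}(\mathbf a)\in X$ by the induction hypothesis, Lemma~\ref{atob} gives $X(Q,f,f^{-t}(\mathbf a))\subseteq X$, and therefore $f^{-(t+1)}(\mathbf a)\in X$. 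This proves $\{f^{-t}(\mathbf a):t\geq 0\}\subseteq X$, hence $X(Q,f^{-1},\mathbf a)\subseteq X(Q,f,\mathbf a)$. The reverse inclusion then follows by interchanging the roles of $f$ and $f^{-1}$, since the hypotheses are symmetric in $f$ and $f^{-1}$.

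The only non-routine step is the topological observation in the first paragraph that removing finitely many initial points of the orbit does not affect the closure of an irreducible component; I expect this to be the main (but modest) obstacle, and it is handled by the irreducibility statement in Lemma~\ref{isinvariant}(i) together with the elementary fact that an irreducible variety cannot be written as the union of a proper closed subset and a finite set. Everything else is a formal consequence of Lemma~\ref{atob} and the decomposition of Lemma~\ref{isinvariant}.
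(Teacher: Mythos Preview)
Your proof is correct and follows essentially the same approach as the paper: both use Lemma~\ref{atob} for one inclusion and Lemma~\ref{isinvariant} to handle the other, and the second statement is reduced to the first in the same way by showing $f^{-t}(\mathbf a)\in X(Q,f,\mathbf a)$. The only cosmetic difference is that for the inclusion $X\subseteq Y_j$ the paper shows $\mathbf a\in X(Q,f,f(\mathbf a))$ via the dimension equality $\dim X(Q,f^m,f^m(\mathbf a))=\dim X(Q,f^m,\mathbf a)$ (so the irreducible set equals its closed subset of the same dimension), whereas you argue directly that tails of the orbit remain dense in each irreducible $X_i$; both are the same ``removing finitely many points does not change the closure'' observation.
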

\begin{proof}
We only need to prove the first statement  for $j=1$.
By Lemma \ref{isinvariant}, there is  a positive  integer $m$ such that 
$X(Q, f^m, \mathbf a)$ is irreducible. Since  $X(Q, f^m,f^m(\mathbf a))$ is a closed  subset of  $X(Q, f^m, \mathbf a)$   and clearly  $\dim X(Q, f^m,f^m(\mathbf a))  =  \dim X(Q, f^m, \mathbf a)$, thus they are equal.  Hence 
\[     \mathbf a \in   X(Q, f^m, \mathbf a) =    X(Q, f^m,f^m(\mathbf a)) \subset    X(Q,f, f(\mathbf a)).      \]
So it is clear that $X(Q,f,\mathbf a) = X(Q,f,f(\mathbf a))$.

For the second statement,  it is enough to show that $f^{-j}(\mathbf a) \in X(Q,f,\mathbf a)$ for any $j\geq 0$. Indeed, by  the first statement we just proved, we have 
\[    X(Q, f, f^{-j}(\mathbf a))  =     X(Q, f, f^{-j+1}(\mathbf a)) =\dots = X(Q, f, \mathbf a).         \]

It is clear that  $f^{-j}(\mathbf a) \in X(Q,f,\mathbf a)$ and thus  $ X(Q,f^{-1}, \mathbf a)\subset  X(Q,f, \mathbf a)$.  The converse is similar. 
\end{proof}

By Proposition \ref{inverse},  if $\mathbf a$  is a general specialization
for both $f$ and $f^{-1}$, for example $\mathbf a\in (\mathbb R_{>0})^n$,   it is safe to  define the generalized frieze variety to be the Zariski closure of the set $\{f^t(\mathbf x)|_{\mathbf x=\mathbf a}, t\in \mathbb Z\}$, and it is the same to consider the Zariski closure of  $\{f^t(\mathbf x)|_{\mathbf x=\mathbf a}, t\geq j\}$ for some $j\in \mathbb Z$.
 
 \begin{corollary}
 	A generalized frieze variety $X(Q,f,\mathbf a)$
is irreducible if and only if  $f^j(\mathbf a)$ and $f^{j+1}(\mathbf a)$ belong to the same irreducible component of  	$X(Q,f,\mathbf a)$ for some $j\in \mathbb N$.
 	\end{corollary}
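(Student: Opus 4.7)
The plan is to deduce the corollary directly from Lemma \ref{isinvariant}, which already packages the relevant structural information about the decomposition of $X(Q,f,\mathbf a)$ into irreducible components and the orbit behavior of the iterates $f^j(\mathbf a)$ under the cluster automorphism. Write $X = X(Q,f,\mathbf a)$ and let $m\geq 1$ and $X = X_0\cup\cdots\cup X_{m-1}$ with $X_i = X(Q,f^m,f^i(\mathbf a))$ be the decomposition provided by that lemma.

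For the forward implication, I would argue that if $X$ is irreducible, then by property (ii) of Lemma \ref{isinvariant} (no $X_i$ is contained in another) together with the irreducible decomposition, one must have $m = 1$. Consequently, all iterates $f^j(\mathbf a)$ lie in the single component $X_0 = X$, so in particular $f^j(\mathbf a)$ and $f^{j+1}(\mathbf a)$ share the same irreducible component for every $j\in\mathbb N$.

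For the converse, I would invoke property (iii) of Lemma \ref{isinvariant}, which asserts that $f^j(\mathbf a)$ lies in $X_i$ precisely when $j\equiv i\pmod m$. If $f^j(\mathbf a)$ and $f^{j+1}(\mathbf a)$ both belong to the same component $X_i$, then $j\equiv i\equiv j+1\pmod m$, which forces $m\mid 1$ and hence $m=1$. Therefore the decomposition collapses to $X = X_0$, and $X$ is irreducible.

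There is essentially no obstacle beyond correctly unpacking Lemma \ref{isinvariant}; the corollary is a clean combinatorial consequence of the fact that the cluster automorphism $f$ cyclically permutes the irreducible components of $X$ with period exactly $m$, so two consecutive iterates can only coincide in a component when that period equals one. The only minor subtlety worth flagging is that we are using the full strength of (iii), namely that the correspondence between $j\bmod m$ and the component containing $f^j(\mathbf a)$ is a bijection onto $\{X_0,\dots,X_{m-1}\}$, so that $f^j(\mathbf a)$ does not accidentally lie in more than one component simultaneously.
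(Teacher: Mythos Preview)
Your proof is correct and follows essentially the same approach as the paper's: both deduce the result directly from Lemma \ref{isinvariant}, using property (iii) to force $m=1$ when two consecutive iterates share a component. The paper's proof is terser (it only writes out the nontrivial ``if'' direction and first shifts to $\mathbf b = f^j(\mathbf a)$ via Proposition \ref{inverse} before invoking Lemma \ref{isinvariant}), whereas you apply property (iii) at index $j$ directly without the shift; this is a cosmetic difference, not a substantive one.
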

 \begin{proof}
 Let $\mathbf b= f^j(\mathbf a)$ such that $\mathbf b$ and $f(\mathbf b)$ belong to the same irreducible component of $X(Q,f,\mathbf a)=X(Q,f,\mathbf b)$. By Lemma  \ref{isinvariant},  $X(Q,f,\mathbf a)$ is irreducible.
 \end{proof}
\begin{lemma}\label{ir} 
 If the generalized frieze variety $X(Q,f,\mathbf a)$ is irreducible, then  for any $r\geq 1$, we have that \[     X(Q,f,\mathbf a) =X(Q,f^r,\mathbf a)    .\]
\end{lemma}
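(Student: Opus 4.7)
The plan is to decompose the orbit of $\mathbf a$ under $f$ into $r$ sub-orbits under $f^r$, and then use the irreducibility of $X := X(Q,f,\mathbf a)$ to force every sub-orbit closure to coincide with $X$. Writing $S_j := \{f^{rt+j}(\mathbf a) : t \geq 0\}$ and $Z_j := \overline{S_j} = X(Q, f^r, f^j(\mathbf a))$ for $j = 0,1,\dots,r-1$, one immediately has $X = Z_0 \cup Z_1 \cup \dots \cup Z_{r-1}$, a finite union of closed subsets. Irreducibility of $X$ then yields some index $j_0$ with $Z_{j_0} = X$; the goal is to upgrade this to $Z_j = X$ for every $j$, since $Z_0 = X(Q, f^r, \mathbf a)$ is exactly what the lemma demands.

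The key step is to show $\dim Z_j = \dim Z_{(j+1) \bmod r}$ for every $j$. Let $F : \mathbb C^n \dashrightarrow \mathbb C^n$ denote the rational map induced by the cluster automorphism $f$; by the definition of a general specialization, the entire orbit of $\mathbf a$ lies in the domain of definition $U$ of $F$. For $0 \leq j < r-1$ one has $F(S_j) = S_{j+1}$, while for $j = r-1$ one has $F(S_{r-1}) = \{f^{rs}(\mathbf a) : s \geq 1\}$, whose Zariski closure equals $X(Q, f^r, f^r(\mathbf a)) = X(Q, f^r, \mathbf a) = Z_0$ by Proposition \ref{inverse} applied to $(Q, f^r)$, noting that $\mathbf a$ is automatically a general specialization for $f^r$. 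So in every case $\overline{F(S_j)} = Z_{(j+1) \bmod r}$. Since $S_j \subset Z_j \cap U \subset Z_j$ and $\overline{S_j} = Z_j$, we have $\overline{Z_j \cap U} = Z_j$, so Lemma \ref{buqueding} gives $\overline{F(Z_j \cap U)} = Z_{(j+1) \bmod r}$ and hence the dimension bound $\dim Z_j \geq \dim Z_{(j+1) \bmod r}$. Running this inequality cyclically around $j = 0, 1, \dots, r-1$ forces all the $\dim Z_j$ to coincide.

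Combining this with $Z_{j_0} = X$ shows $\dim Z_j = \dim X$ for every $j$. Since each $Z_j$ is a closed subset of the irreducible variety $X$ having the same dimension as $X$, an irreducible component of $Z_j$ of maximal dimension is a closed irreducible subset of $X$ of dimension $\dim X$ and therefore equals $X$; hence $Z_j = X$. Taking $j = 0$ yields $X(Q, f^r, \mathbf a) = X$, as required. The main subtlety to watch is the wrap-around case $j = r-1$, where $F(S_{r-1})$ is the orbit $S_0$ with the single point $\mathbf a$ removed; invoking Proposition \ref{inverse} for $f^r$ is precisely what ensures that deleting $\mathbf a$ does not shrink the Zariski closure.
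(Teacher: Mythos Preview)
Your proof is correct and follows essentially the same approach as the paper: decompose $X$ as $\bigcup_{j=0}^{r-1} X(Q,f^r,f^j(\mathbf a))$, use Lemma \ref{buqueding} to see that all the pieces have the same dimension, and then invoke irreducibility of $X$ to conclude $X(Q,f^r,\mathbf a)=X$. The paper's version is terser (it appeals to Lemma \ref{buqueding} once to assert all dimensions coincide and then concludes directly), whereas you spell out the cyclic chain of inequalities, treat the wrap-around case $j=r-1$ explicitly via Proposition \ref{inverse}, and detour through the existence of some $Z_{j_0}=X$ before extending to all $j$; these elaborations are harmless but not needed.
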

\begin{proof}
Let $Y_i =  X(Q, f^r,  f^i(\mathbf a))$ for $0\leq i\leq r-1$, then 
$X(Q,f,\mathbf a)  =  Y_0 \cup \dots \cup Y_{r-1}$.  By Lemma \ref{buqueding}, $\dim Y_0 = \dots =\dim Y_{r-1}$, and hence $\dim X(Q,f,\mathbf a) = \dim Y_0$. Since $X(Q,f,\mathbf a)$ is irreducible, thus $X(Q,f,\mathbf a) = Y_0$.
\end{proof}

More generally, we have the following result. 
\begin{proposition}
With the notations as in Lemma \ref{isinvariant}.
Suppose that $X=X(Q,f, \mathbf a)$ has the irreducible decomposition  $X=X_0\cup \dots \cup X_{m-1}$. For any $k\geq 1$, the generalized frieze variety
 $X(Q,f^k,\mathbf a)$ has the irreducible decomposition
\[  X(Q,f^k,\mathbf a)  =\bigcup\limits_{0\leq j< s} X_{jd},           \]
where  $d= \gcd(m,k)$ and $m = d\cdot s$. 
 		\end{proposition}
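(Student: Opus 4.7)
The plan is to prove the displayed equality by showing both inclusions, and then to observe that the right-hand side is already the irreducible decomposition via Lemma \ref{isinvariant}(i)--(ii). For the inclusion $X(Q,f^k,\mathbf a)\subseteq\bigcup_{j=0}^{s-1}X_{jd}$, I would use that $X(Q,f^k,\mathbf a)$ is by definition the Zariski closure of the forward orbit $\{f^{jk}(\mathbf a):j\geq 0\}$. The set of residues $\{jk\bmod m:j\geq 0\}$ is the cyclic subgroup of $\mathbb Z/m\mathbb Z$ generated by $k$, hence equals $d\mathbb Z/m\mathbb Z=\{0,d,2d,\ldots,(s-1)d\}$. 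By Lemma \ref{isinvariant}(iii), each $f^{jk}(\mathbf a)$ lies in $X_{jk\bmod m}$, so the orbit is contained in $\bigcup_{l=0}^{s-1}X_{ld}$ and the inclusion follows by taking closures.

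For the reverse inclusion, I would fix $l\in\{0,\ldots,s-1\}$ and isolate the sub-orbit $O_l=\{f^{jk}(\mathbf a):jk\equiv ld\pmod m\}$, which lies in $X_{ld}$ by the preceding step. Since $\gcd(k/d,s)=1$, the valid indices $j$ form an arithmetic progression $j_0+s\mathbb Z_{\geq 0}$; dropping finitely many initial terms if needed, one may rewrite $O_l=\{(f^{m(k/d)})^t(\mathbf b):t\geq 0\}$ for some $\mathbf b=f^{ld+\mu_0 m}(\mathbf a)$ with $\mu_0\geq 0$, so $\overline{O_l}=X(Q,f^{m(k/d)},\mathbf b)$. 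Applying the first part of Proposition \ref{inverse} to the cluster automorphism $f^m$ gives $X(Q,f^m,\mathbf b)=X(Q,f^m,f^{ld}(\mathbf a))=X_{ld}$, which is irreducible by Lemma \ref{isinvariant}(i); Lemma \ref{ir} (applied with cluster automorphism $f^m$ and exponent $r=k/d$) then promotes this equality to $X(Q,f^{m(k/d)},\mathbf b)=X_{ld}$. Hence $X_{ld}=\overline{O_l}\subseteq X(Q,f^k,\mathbf a)$.

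Irreducibility of each $X_{jd}$ is Lemma \ref{isinvariant}(i) and non-redundancy $X_{id}\nsubseteq X_{jd}$ for $i\neq j$ is Lemma \ref{isinvariant}(ii), so $\bigcup_{j=0}^{s-1}X_{jd}$ is the irreducible decomposition. The main obstacle is the reverse inclusion: the sub-orbit $O_l$ only sees every $(k/d)$-th power of $f^m$, which is genuinely sparser than the full $f^m$-orbit whose closure is $X_{ld}$. The gap is bridged exactly by Lemma \ref{ir}, and it is this step that makes the irreducibility of the $X_i$ (rather than merely their being components) truly necessary.
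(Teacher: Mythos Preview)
Your proof is correct and follows essentially the same route as the paper. Both arguments hinge on the identity $sk=\lcm(m,k)=mr$ with $r=k/d$, use Proposition~\ref{inverse} to identify $X(Q,f^m,f^{ik}(\mathbf a))$ with the component $X_{ik\bmod m}$, and invoke Lemma~\ref{ir} (applied to the automorphism $f^m$ with exponent $r=k/d$) to pass from $X(Q,f^{mr},\cdot)$ to $X(Q,f^{m},\cdot)$; the paper simply packages these steps as a chain of equalities $X(Q,f^k,\mathbf a)=\bigcup_{i}X(Q,f^{mr},f^{ik}(\mathbf a))=\bigcup_{i}X(Q,f^{m},f^{ik}(\mathbf a))$ rather than splitting into two inclusions.
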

\begin{proof}
Let $\lcm(m,k) =m r =ks$. We have that  
\[\begin{split} X(Q,f^k,\mathbf a)  &= \bigcup\limits_{1\leq i\leq s} X(Q,f^{mr}, f^{ik}(\mathbf a))\\
	&= \bigcup\limits_{1\leq i\leq s} X(Q,f^{m}, f^{ik}(\mathbf a)).
\end{split}	\]

The second equality follows from Lemma \ref{ir} and the fact that $X(Q,f^m, f^{ik}(\mathbf a)) = X_j$ for some $j$ by Lemma \ref{inverse} which is irreducible.  Since $\{ik, 1\leq i\leq s\}$  modulo $m$ is the same as $\{jd, 0\leq j<s\}$ modulo $m$,  the statement holds by Lemma \ref{inverse}.
\end{proof}

The following results will  show that numbers of irreducible components and dimensions of  generalized  frieze varieties  are preserved  under mutation.

\begin{lemma}\label{unmutations}
Assume that  the generalized frieze variety $X(Q,f,\mathbf a)$ is irreducible.  Let   $(\mathbf x', Q') = \mu_{i_k}\dots\mu_{i_1}(\mathbf x, Q)$ such that $\mathbf b:= \mathbf x'|_{\mathbf x=\mathbf a}$ is a general point for $(\mathbf x',Q')$ with respect to $f$. Then $X(Q',f,\mathbf b)$ is irreducible and birationally equivalent to $X(Q,f,\mathbf a)$.
\end{lemma}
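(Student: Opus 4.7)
The plan is to promote the mutation isomorphism at the level of the ambient field to a birational map of the two frieze varieties. Write the sequence of mutations defining $(\mathbf x',Q')$ from $(\mathbf x,Q)$ as a tuple of Laurent polynomials $g = (g_1,\dots,g_n)$ with $\mathbf x' = g(\mathbf x)$ in $\mathcal F = \mathbb Q(x_1,\dots,x_n)$. Because each individual mutation is an involution, the reverse sequence $\mu_{i_1}\dots\mu_{i_k}$ produces a dual tuple $h$ with $\mathbf x = h(\mathbf x')$, and $g,h$ are mutually inverse as Laurent substitutions. Since Laurent polynomials are regular on $(\mathbb C^*)^n$, they induce rational maps $\varphi,\psi\colon \mathbb C^n \dashrightarrow \mathbb C^n$ with dense open domains $U,V$ respectively containing $(\mathbb C^*)^n$, and $\psi\circ\varphi = \mathrm{id}$ on an open dense subset.

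The key compatibility to establish is $\varphi(f^t(\mathbf a)) = f^t(\mathbf b)$ for every $t\geq 0$. Since $f$ is a $\mathbb Z$-algebra automorphism of $\mathcal A$ that extends to an automorphism of $\mathcal F$, substitution into Laurent polynomials with integer coefficients commutes with $f$:
\[
f^t(\mathbf x') \;=\; f^t\bigl(g(\mathbf x)\bigr) \;=\; g\bigl(f^t(\mathbf x)\bigr).
\]
Evaluating both sides at $\mathbf x = \mathbf a$, the right-hand side is $g(f^t(\mathbf a)) = \varphi(f^t(\mathbf a))$; the left-hand side equals $f^t(\mathbf x')\big|_{\mathbf x'=\mathbf b}$ because $f^t(\mathbf x')$, viewed as a Laurent polynomial in $\mathbf x'$, gives the same value under either substitution (since $\mathbf b = g(\mathbf a)$), and this latter value is exactly $f^t(\mathbf b)$ by definition. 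Note that the hypothesis that $\mathbf b$ is a general specialization guarantees $f^t(\mathbf a)\in U$ for all $t$, since $\varphi(f^t(\mathbf a)) = f^t(\mathbf b) \in (\mathbb C^*)^n$, and symmetrically $f^t(\mathbf b)\in V$.

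With the compatibility in hand, both $\{f^t(\mathbf a)\}_{t\geq 0}$ and $\{f^t(\mathbf b)\}_{t\geq 0}$ are related pointwise by $\varphi$ and $\psi$. Because $X(Q,f,\mathbf a)$ is irreducible and $U$ is a dense open subset of $\mathbb C^n$, the intersection $X(Q,f,\mathbf a)\cap U$ is a dense open subset of $X(Q,f,\mathbf a)$ containing $\{f^t(\mathbf a)\}_{t\geq 0}$. Applying Lemma \ref{buqueding}(2) to the equality $\overline{\{f^t(\mathbf a)\}_{t\geq 0}} = \overline{X(Q,f,\mathbf a)\cap U}$ yields
\[
X(Q',f,\mathbf b) \;=\; \overline{\varphi\bigl(X(Q,f,\mathbf a)\cap U\bigr)}.
\]
Since $\varphi$ is continuous on $U$ and $X(Q,f,\mathbf a)\cap U$ is irreducible, its closed image $X(Q',f,\mathbf b)$ is irreducible. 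The restriction $\varphi\colon X(Q,f,\mathbf a)\dashrightarrow X(Q',f,\mathbf b)$ is then a dominant rational map, and by the symmetric argument so is $\psi\colon X(Q',f,\mathbf b)\dashrightarrow X(Q,f,\mathbf a)$. Since $\psi\circ\varphi$ is the identity on the dense subset where both are defined, the two maps induce inverse isomorphisms of function fields, establishing birational equivalence.

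The principal obstacle I anticipate is the bookkeeping around domains of definition: to combine the two dominant rational maps into a birational equivalence one must be sure that the sequence $\{f^t(\mathbf a)\}$ stays inside $U$ and that $\{f^t(\mathbf b)\}$ stays inside $V$, and one must apply the irreducibility of $X(Q,f,\mathbf a)$ at the right point to guarantee that $X(Q,f,\mathbf a)\cap U$ is actually dense. Both of these follow, as sketched above, from the general-specialization hypothesis on $\mathbf b$ together with the irreducibility assumption; once they are verified, the proof is essentially a translation between the algebraic definition of cluster mutation and the geometric statement of birationality.
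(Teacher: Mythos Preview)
Your proof is correct and follows essentially the same approach as the paper: both view the mutation sequence and its inverse as mutually inverse birational maps of $\mathbb C^n$ defined on the torus, establish the compatibility $\varphi(f^t(\mathbf a))=f^t(\mathbf b)$ from the fact that $f$ is a cluster automorphism, and then use Lemma~\ref{buqueding} together with the irreducibility of $X(Q,f,\mathbf a)$ to conclude. The only cosmetic difference is that the paper deduces $X'=\overline{g(Y)}$ via a chain of dimension inequalities from Lemma~\ref{buqueding}(1), whereas you invoke Lemma~\ref{buqueding}(2) directly on the equality $\overline{\{f^t(\mathbf a)\}}=\overline{X(Q,f,\mathbf a)\cap U}$; both routes are equally valid.
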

\begin{proof}  Let  $X= X(Q,f,\mathbf a)$ and $X' = X(Q',f,\mathbf b)$ for simplicity.
	We may view mutations as birational maps on affine spaces, and let $g: \mathbb C^n\dasharrow \mathbb C^n$ and $h: \mathbb C^n\dasharrow \mathbb C^n$ denote the birational maps defined on the torus $(\mathbb C^*)^n$ which are induced by  $\mu_{i_k}\dots\mu_{i_1}$ and $\mu_{i_1}\dots\mu_{i_k}$, respectively. 
	Since $f$ is a cluster automorphism, then  $f^t(\mathbf b) =  g(f^t(\mathbf a))$ for $t\in \mathbb N$.  Denote by $S$ the set $\{f^t(\mathbf a), t\in \mathbb N\}$, then it is obvious that $S=h(g(S)) \subset Y$ and $g(S) = \{f^t(\mathbf b), t\in \mathbb N\}$. 

	 Let $Y= X\cap (\mathbb C^*)^n$, then $\overline{g(Y)}$ is irreducible by the irreducibility of $Y$ and continuity of $g$,  and by Lemma \ref{buqueding}, we have that
	\[  \dim   X =\dim \overline{h(g(S))} \leq  \dim \overline{g(S)}\leq   \dim  \overline{g(Y)} \leq \dim \overline{Y} = \dim X.                \]
	
	Hence $\dim X' = \dim \overline{g(Y)}$ and  we therefore have $X' = \overline{g(Y)}$ is irreducible. The birational equivalence of $X$ and $X'$ is clear.
\end{proof}

By Lemma \ref{isinvariant} and Lemma \ref{unmutations}, we obtain the following result.

\begin{proposition}
Let $\mathbf a$ be a general point for $(\mathbf x, Q)$ with respect to $f$, and let $\mathbf b :=  \mathbf x'(\mathbf a)$ be a  general point for   $(\mathbf x', Q')$ with respect to $f$, where   $(\mathbf x', Q') = \mu_{i_k}\dots\mu_{i_1}(\mathbf x, Q)$.  Then  $X(Q,f, \mathbf a) = \cup_{i=0}^{m-1} X(Q,f^m,f^i(\mathbf a))$ is the   irreducible decomposition of  $X(Q,f, \mathbf a)$ if and only if  $X(Q',f, \mathbf b) = \cup_{i=0}^{m-1} X(Q',f^m,f^i(\mathbf b))$ is the  irreducible decomposition of  $X(Q',f, \mathbf b)$, and  $X(Q,f^m,f^i(\mathbf a))$ is birationally equivalent to  $X(Q',f^m,f^i(\mathbf b))$ for each $i$.
\end{proposition}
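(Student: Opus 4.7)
The plan is to apply Lemma \ref{unmutations} componentwise to the decomposition produced by Lemma \ref{isinvariant}. First I would record the compatibility $f^i(\mathbf b) = g(f^i(\mathbf a))$ for every $i\geq 0$, where $g$ is the birational map induced by $\mu_{i_k}\dots\mu_{i_1}$; this is exactly the relation used in the proof of Lemma \ref{unmutations} and holds because $f$ is a cluster automorphism. In particular, $f^i(\mathbf a)$ is a general specialization for $(Q,f^m)$ if and only if $f^i(\mathbf b)$ is one for $(Q',f^m)$.

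Assume now that $X(Q,f,\mathbf a)=\bigcup_{i=0}^{m-1} X(Q,f^m,f^i(\mathbf a))$ is the irreducible decomposition, so each piece is irreducible. Applying Lemma \ref{unmutations} to the triple $(Q,f^m,f^i(\mathbf a))$ with the mutation sequence $\mu_{i_k}\dots\mu_{i_1}$ (treating $f^m$ as the ambient cluster automorphism), I obtain that $X(Q',f^m,f^i(\mathbf b))$ is irreducible and birationally equivalent to $X(Q,f^m,f^i(\mathbf a))$ for each $i$. Combining this with the trivial set-theoretic equality $X(Q',f,\mathbf b) = \bigcup_{i=0}^{m-1} X(Q',f^m,f^i(\mathbf b))$ realizes the right-hand side as a union of $m$ irreducibles, each birationally equivalent to the corresponding component on the $Q$-side, so the birational equivalence asserted in the proposition is produced automatically by this step.

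The remaining task is to show that no component $X(Q',f^m,f^i(\mathbf b))$ is contained in another, and this is where the only subtle point lies. Using the explicit description from the proof of Lemma \ref{unmutations}, the piece $X(Q,f^m,f^i(\mathbf a))$ is recovered as the Zariski closure of the image of $X(Q',f^m,f^i(\mathbf b))\cap (\mathbb C^*)^n$ under the inverse birational map $h$ induced by $\mu_{i_1}\dots\mu_{i_k}$; so any containment $X(Q',f^m,f^i(\mathbf b)) \subseteq X(Q',f^m,f^j(\mathbf b))$ with $i\neq j$ would transfer to a containment on the $Q$-side, contradicting the hypothesis. The reverse implication in the proposition is then symmetric, interchanging $(Q,\mathbf a) \leftrightarrow (Q',\mathbf b)$ and replacing the mutation sequence by its inverse. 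I expect the main obstacle to be nothing conceptual, but the bookkeeping: $g$ and $h$ are defined only on the open tori, so to transfer non-containment one must pass through intersections with $(\mathbb C^*)^n$ followed by Zariski closures, exactly as in the proof of Lemma \ref{unmutations}, and the general-specialization hypothesis is what guarantees the relevant orbits sit inside $(\mathbb C^*)^n$ and makes this transfer rigorous.
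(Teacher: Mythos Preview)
Your proposal is correct and follows exactly the approach the paper intends: the paper's own proof consists of the single sentence ``By Lemma \ref{isinvariant} and Lemma \ref{unmutations}, we obtain the following result,'' and what you have written is precisely the natural way to unpack that sentence. Your handling of the one nontrivial point---transferring non-containment of components back through the birational maps $g,h$ by intersecting with the torus and taking closures, using that the orbits lie in $(\mathbb C^*)^n$ by the general-specialization hypothesis---is the correct bookkeeping and is implicit in the proof of Lemma \ref{unmutations}.
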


\begin{example}\label{markovexa}
Let $Q$ be the Markov quiver given by 
\[
\begin{tikzcd}& 2\arrow[dr,shift right=0.5ex] \arrow[dr,shift left=0.5ex]&\\
	1\arrow[ur,shift left=0.5ex]  \arrow[ur,shift right=0.5ex]  & & 3\arrow[ll,shift left=1.0ex]\arrow[ll]  \end{tikzcd}
\]

In this case, each mutation sequence induces a cluster  automorphism.

 In particular, $\mu_1$ induces a cluster automorphism  $f_1$ given by
\[ f_1(x_1) = \frac{x_2^2+x_3^2}{x_1},\quad f_1(x_2) = x_2,\quad f_1(x_3)=x_3. \]

For any general point $\mathbf a=(a,b,c)\in (\mathbb C^*)^3$  of $(Q,f_1)$, the generalized frieze variety $X(Q,f_1,\mathbf a)$ consists of  only one point $\mathbf a$ if and only if $a^2=b^2+c^2$, for example $\mathbf a=(5,4,3)$. Otherwise, $X(Q,f_1,\mathbf a)$  consists of two points  $\mathbf a$ and $f(\mathbf a)$. 

Suppose that $f$ is induced by $\mu_2\mu_1$,  that is $f(\mathbf x) = \mu_2\mu_1(\mathbf x)$.
This is the same as the Kronecker quiver with a frozen vertex. Let $\mathbf a=(a,b,c)\in (\mathbb C^*)^3$ be a general point, then $X(Q,f,\mathbf a)$ is contained in the zero locus of $h_1$  and $h_2$ which are given by 
\[    h_1= x_1^2+x_2^2-\frac{a^2+b^2+c^2}{ab} x_1x_2+c^2,\quad  h_2=x_3-c.              \] 

Let 
\[     d: =(\frac{a^2+b^2+c^2}{ab})^2 -2.            \]

The polynomial $h_1$ is irreducible if and only if $d=2$.  

The generalized frieze variety $X=X(Q,f,\mathbf a)$
has dimension $0$ if and only if $d$ satisfies that $d\neq  2$ and the roots of $x^2-dx+1$  are  roots of unity, that is  $d= 2\cos\frac{2k\pi}{m}$ for some positive integers $k,m$ with $1\leq k\leq m-1$.  Otherwise, $\dim X=1$ and  $X$ is isomorphic to a line if  $d=2$ and is isomorphic to a conic if $d\neq 2$.  More concretely, we  illustrate some examples with $\dim X=0$  and all cases with $\dim X=1$ as follows. 

Let $d = 0$, then  $f^2(\mathbf a)= (-a, -b,c)$ and
\[  X(Q,f,\mathbf a) =\{ \mathbf a, \;  f(\mathbf a), \; f^2(\mathbf a),\;  f^3(\mathbf a)\}.\]

Let $d = \pm 1$, then 
\[  X(Q,f,\mathbf a) =\{ \mathbf a, \; f(\mathbf a),\; f^2(\mathbf a)  \}.\]

Let $d=-2$, this is equivalent to $a^2+b^2+c^2=0$, we have that
\[X(Q,f,\mathbf a) =\{\mathbf a\} \cup \{(-a,-b,c)\}.
\]

Let $d=2$, this is equivalent to either $(a+b)^2+c^2=0$ or $(a-b)^2+c^2=0$. In this case,  $X(Q,f,\mathbf a)$ is a smooth irreducible curve  of degree $1$ given as follows, which is isomorphic to a  line in the plane $\mathbb C^2$.
\[X(Q,f,\mathbf a) = \begin{cases} \Z(h_2,\;\, x_1+x_2-a-b), \quad &\text{if  $(a+b)^2+c^2=0$},\\
	\Z(h_2, \;\,x_1-x_2-a+b), \quad &\text{if  $(a-b)^2+c^2=0$}. 
	\end{cases}\]

Suppose $d\in \mathbb C$ such that the roots of  $x^2-dx+1$ are not roots of unity.  In this case  $h_1$ is irreducible, and $X(Q,f,\mathbf a) = \Z(h_1,h_2)$  is a smooth irreducible curve of degree $2$, which is isomorphic to the conic $\Z(h_1(x,y)) \subset \mathbb C^2$.

For example, if  $\mathbf a=(\sqrt{2},3,\sqrt{5}\mathrm{i})$, then $d=0$ and
\[ X(Q,f,\mathbf a) = \{\mathbf a,  (2\sqrt{2},1,\sqrt{5}\mathrm{i}), ( -\sqrt{2},-3,\sqrt{5}\mathrm{i}), (-2\sqrt{2},-1,\sqrt{5}\mathrm{i})\}.\]
If $\mathbf a=(\frac{\sqrt{2}}{2}\mathrm{i}, \frac{\sqrt{2}}{2}\mathrm{i},1)$, then $d=-2$ and $X(Q,f,\mathbf a) =  \{\mathbf a\}\cup \{   (-\frac{\sqrt{2}}{2}\mathrm{i}, -\frac{\sqrt{2}}{2}\mathrm{i},1)  \}$.  
If $\mathbf a=(3,4,\mathrm{i})$, then $d=2$ and $f^j(\mathbf a) =(2j+3,2j+4,\mathrm{i})$. Hence  $X(Q,f,\mathbf a) = \Z(x_3-\mathrm{i}, x_1-x_2+1)$.
While  in most cases, for example $\mathbf a\in (\mathbb R_{>0})^n$ (clearly $d>2$, the roots of $x^2-dx+1$ are not roots of unity),  $X(Q,f,\mathbf a)$ is the zero locus of $h_1$ and $h_2$.
\end{example} 

The quiver and its maximal green sequence in the following example was studied by Zhou in \cite{zhou2019scattering}.
\begin{example}\label{markovunfolding}
Let $Q$ denote the following quiver:
\[\begin{tikzcd}
	&1\arrow[r,shift left=0.6ex]\arrow[dd] &2\arrow[dr,shift left=1ex]\arrow[dll]&\\
	6\arrow[ur,shift left=1ex]\arrow[drr] &&&3\arrow[ull]\arrow[dl,shift left=1ex]\\
	&5\arrow[ul,shift left=1ex]\arrow[urr]&4\arrow[l,shift left=0.6ex]\arrow[uu]&
\end{tikzcd}\]

The  cluster DT automorphism $f$ induced by the maximal green sequence  \[(1, 3, 2, 4, 6, 5, 1, 6, 4, 3, 2, 5)\]  is given by 
\[\begin{split}
	f(x_1) = \frac{(x_1x_4+x_2x_5+x_3x_6)^2}{x_2x_3x_4x_5x_6},\quad f(x_2) = \frac{(x_1x_4+x_2x_5+x_3x_6)^2}{x_1x_3x_4x_5x_6}, \\ 
	f(x_3) = \frac{(x_1x_4+x_2x_5+x_3x_6)^2}{x_1x_2x_4x_5x_6},\quad
f(x_4)=	\frac{(x_1x_4+x_2x_5+x_3x_6)^2}{x_1x_2x_3x_5x_6},\\ 
f(x_5) = \frac{(x_1x_4+x_2x_5+x_3x_6)^2}{x_1x_2x_3x_4x_6},\quad f(x_6) = \frac{(x_1x_4+x_2x_5+x_3x_6)^2}{x_1x_2x_3x_4x_5}.
	\end{split}\]

It is clear that $f^2=id$. The generalized frieze variety  $X(Q,f,\mathbf a)$ consists of two points $\mathbf a$
and $f(\mathbf a)$. For example, we have that
\[X(Q,f,\mathbf 1) :=\{(1,1,1,1,1,1),\; (9,9,9,9,9,9)\}.\]
\end{example}

\subsection{Coxeter automorphisms}
Let $Q$ be an acyclic quiver. We may, without loss of generality,   assume that  the vertices are labeled by $1, 2, \dots, n$ such that if there is an arrow $i\rightarrow j$, then  $i>j$. The   Coxeter mutation $\mu_c : = \mu_n  \dots \mu_2 \mu_1$ is the composition of the mutation sequence  $\mu_1, \dots, \mu_n$.  For $t\in \mathbb{N}$, $\mu_c^t$ is defined inductively by  $\mu_c^t = \mu^{t-1}_c\cdot \mu_c$.
\begin{lemma}\label{coxeterautomorphism}
	Let $Q$ be an acyclic quiver, $\mathcal A=\mathcal A(\mathbf x, Q)$ the cluster algebra, and $\mu_c$ the Coxeter mutation.  Then there exists a unique  cluster automorphism
	$f_c: \mathcal A \rightarrow \mathcal A$ such that for all $t\geq 0$, one have that 
	\[f_c^t(\mathbf x)  =  \mu_c^t(\mathbf x). \]	
\end{lemma}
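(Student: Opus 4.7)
The plan is to construct $f_c$ as a $\mathbb Z$-algebra automorphism of the ambient field $\mathcal F$ and then invoke Lemma \ref{cllp} to deduce that it restricts to a cluster automorphism of $\mathcal A$. The key input is the standard observation that for an acyclic $Q$ labeled so that $k$ is a sink of $\mu_{k-1}\cdots\mu_1(Q)$ for each $k$, one has $Q(\mu_c(\mathbf x)) = Q$. Indeed, at each step the mutation is performed at a sink, which merely reverses the arrows at that vertex and creates no new ones (since a sink admits no outgoing arrows, the 2-path rule is vacuous). An arrow $i \to j$ of $Q$ with $i > j$ is then flipped exactly twice along the sequence, once by $\mu_j$ and once by $\mu_i$, and is therefore restored. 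Writing $\mathbf x_1 := \mu_c(\mathbf x) = (x_{1,1},\dots,x_{n,1})$, this provides a second seed $(\mathbf x_1, Q)$ with the same quiver.

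Next I would define $f_c : \mathcal F \to \mathcal F$ as the unique $\mathbb Z$-algebra automorphism extending $x_i \mapsto x_{i,1}$; this is well defined because $\mathbf x_1$ is again a transcendence basis of $\mathcal F$. To apply Lemma \ref{cllp} I need to verify that $f_c(\mu_{x_i}(\mathbf x)) = \mu_{x_{i,1}}(\mathbf x_1)$ for each $i$. This reduces to comparing the two exchange relations at the vertex $i$, and it is immediate because $f_c$ is a ring homomorphism, the mutation formulas depend only on the quiver, and both clusters $\mathbf x$ and $\mathbf x_1$ share the quiver $Q$. Lemma \ref{cllp} then gives that $f_c$ restricts to a cluster automorphism of $\mathcal A$; uniqueness follows from the recalled fact that a cluster automorphism is determined by its value on one cluster.

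For the iteration identity $f_c^t(\mathbf x) = \mu_c^t(\mathbf x)$ I would induct on $t$. Writing the coordinates of $\mu_c^t(\mathbf x)$ as $x_{i,t} = P_i(\mathbf x)$ for fixed rational functions $P_i \in \mathcal F$, the induction hypothesis reads $f_c^t(x_i) = P_i(\mathbf x)$. Applying $f_c$ and using that it is a ring homomorphism gives
\[ f_c^{t+1}(x_i) = f_c(P_i(\mathbf x)) = P_i(f_c(\mathbf x)) = P_i(\mathbf x_1). \]
On the other hand $\mu_c^{t+1}(\mathbf x) = \mu_c^t(\mathbf x_1)$, and since the rational functions encoding $t$-fold Coxeter mutation depend only on the (unchanged) quiver $Q$, the $i$-th component of $\mu_c^t(\mathbf x_1)$ is likewise $P_i(\mathbf x_1)$. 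The two sides therefore agree, closing the induction. The only substantive input in the whole argument is the preservation statement $\mu_c(Q) = Q$; everything else is a bookkeeping argument exploiting that $f_c$ commutes with evaluation of rational functions in the $x_i$, so I do not foresee any real obstacle.
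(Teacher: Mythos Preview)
Your proposal is correct and follows essentially the same route as the paper: define $f_c$ on the ambient field by $x_i\mapsto x_{i,1}$, use $\mu_c(Q)=Q$ to verify the hypothesis of Lemma~\ref{cllp}, and then induct on $t$ via the identity $f_c(\mu_c^{t-1}(\mathbf x))=\mu_c^{t-1}(f_c(\mathbf x))$. Your write-up is in fact more explicit than the paper's about why $\mu_c(Q)=Q$ and about the bookkeeping in the induction step, but the argument is the same.
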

\begin{proof}
	The uniqueness is obvious. Let us prove the existence.
	
	Let $f_c : \mathbb{Q}(x_1,\dots, x_n) \rightarrow \mathbb{Q}(x_1,\dots, x_n)$ be the automorphism of the field $\mathbb{Q}(x_1,\dots, x_n)$ by sending $\mathbf x$ to $\mathbf x' = \mu_c(\mathbf x)$. Notice that $\mu_c(Q) = Q$, then it is easy to see that     \[f(\mu_{x}(\mathbf x))  =  \mu_{f(x)}(\mathbf x')\] for every $x\in \mathbf x$.
	By Lemma \ref{cllp}, we know that  $f_c$ restricts to a cluster automorphism of $\mathcal A$.   
	
	Since $f_c(\mathbf x) = \mu_c(\mathbf x)$, we  prove $f_c^t(\mathbf x)  =  \mu_c^t(\mathbf x)$ by induction on $t$. For $t\geq 1$, we have that 
	\[f_c^t(\mathbf x)= f_c(f_c^{t-1}(\mathbf x)) = f_c(\mu_c^{t-1}(\mathbf x)) =\mu_c^{t-1}(f_c(\mathbf x)) = \mu_c^t(\mathbf  x). \]
	
	This finishes the proof.
\end{proof}

Let $(\mathbf x, Q)$  be an acyclic seed and $f_c$ the induced cluster automorphism of $\ca_Q$ as above. Suppose that $i_1,\dots, i_n$  is an arbitrary admissible sequence of sinks, that is, $i_k$ is a sink of $\mu_{i_{k-1}}\dots \mu_{i_1}(Q)$ for any $k$, then it is easy to see we always have that  $f_c(\mathbf x) = \mu_{i_n}\dots\mu_{i_1}(\mathbf x)$.

\begin{lemma}
	Let $(\mathbf x, Q)$ be an acyclic seed and $f_c$ the Coxeter automorphism of $\ca=\ca (\mathbf x,Q)$. Suppose that $(\mathbf x', Q')$ is another acyclic seed of $\ca$ and $g_c$ is the Coxeter automorphism induced by $(\mathbf x',Q')$.  Then $f_c = g_c$.
\end{lemma}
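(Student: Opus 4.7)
Since a cluster automorphism is determined by its action on any single cluster, the lemma reduces to verifying that $f_c(\mathbf x') = g_c(\mathbf x')$.

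I would first reduce to the case $(\mathbf x', Q') = \mu_k(\mathbf x, Q)$ for some source or sink $k$ of $Q$: any two acyclic seeds of an acyclic cluster algebra are connected by source/sink reflections, so an induction on the length of such a connecting sequence reduces the general statement to this special case. Assuming $k$ is a sink of $Q$, I would pick an admissible sequence of sinks of $Q$ beginning with $k$, say $k = i_1, i_2, \ldots, i_n$. By the defining property of admissibility each $i_j$ ($j\geq 2$) is a sink of $\mu_{i_{j-1}}\cdots\mu_k(Q)$, and since $\mu_c(Q) = Q$ has $k$ as a sink, the shifted sequence $i_2, \ldots, i_n, k$ is an admissible sequence of sinks for $Q' = \mu_k(Q)$. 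This furnishes the two expressions
\[
f_c(\mathbf x) = \mu_{i_n}\cdots \mu_{i_2}\mu_k(\mathbf x), \qquad g_c(\mathbf x') = \mu_k\mu_{i_n}\cdots\mu_{i_2}(\mathbf x').
\]
Substituting $\mathbf x' = \mu_k(\mathbf x)$ and using that $f_c$ is a direct cluster automorphism fixing the quiver $Q$ vertex-wise (so that the cluster-automorphism axiom (CA2) degenerates to $f_c(\mu_k(\mathbf y)) = \mu_k(f_c(\mathbf y))$ for any cluster $\mathbf y$), one telescopes
\[
g_c(\mathbf x') = \mu_k\mu_{i_n}\cdots\mu_{i_2}\mu_k(\mathbf x) = \mu_k(f_c(\mathbf x)) = f_c(\mu_k(\mathbf x)) = f_c(\mathbf x'),
\]
completing the argument.

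The main obstacle is the initial reduction to a single source/sink mutation; this requires the fact that any two acyclic seeds of $\mathcal A$ are connected by BGP-type reflections, which is standard for the Dynkin and affine types motivating this paper but deserves careful citation in the wild case. A more conceptual alternative, which I would use as a fallback, would bypass the reduction entirely by interpreting both $f_c$ and $g_c$ via the Caldero-Chapoton map as the implementation on $\mathcal A$ of the auto-equivalence $\tau^{-1}$ of the cluster category (an interpretation already implicit in the identification $f_c^t(x_i) = X_{\tau^{-t+1}P_i}$ of the preceding subsection), whereupon $f_c = g_c$ follows because $\tau^{-1}$ is intrinsic to the cluster category and does not depend on the presenting seed.
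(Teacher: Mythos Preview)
Your proposal is correct and follows essentially the same approach as the paper: reduce to a single sink/source mutation, choose an admissible sequence of sinks beginning (or ending) with $k$, observe that the cyclic shift is an admissible sequence for $Q'$, and then compare the two Coxeter mutations on $\mathbf x'$ using $\mu_k^2=\mathrm{id}$ together with the commutation $f_c\circ\mu_k=\mu_k\circ f_c$. The paper supplies the citation you were worried about for the reduction step, namely [Corollary~4, Caldero--Keller~II], which asserts that acyclic seeds form a connected subgraph of the exchange graph joined by sink/source mutations (valid in the wild case as well).
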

\begin{proof}
	By [Corollary 4, \cite{caldero2006triangulatedii}],  acyclic seeds of $\ca$ form a connected subgraph of the exchange graph of $\ca$ and two acyclic seeds can be obtained from each other by a sequence of mutations at sinks or sources.  Thus $(\mathbf x, Q)$ and $(\mathbf x',Q')$ are related by a sequence of  mutations at sinks or sources. We may assume that they are related by a single mutation $\mu_k$, and we can take an admissible sequence  $i_1,\dots,i_n$ of sinks such that $i_1=k$ if $k$ is sink of $Q$ and  $i_n=k$ if $k$ is a source of $Q$.  
	
	If $k$ is a source of $Q$, then $k=i_n, i_1,\dots, i_{n-1}$ is an admissible sequence of sinks of $Q'$ and we therefore have that  
	 \[ f_c(\mathbf x') =f_c(\mu_k(\mathbf x)) = \mu_k\mu_ {i_n}\dots\mu_{i_1}\mu_k(\mathbf x') = \mu_ {i_{n-1}}\dots\mu_{i_1}\mu_k(\mathbf x') =g_c(\mathbf x').\]  
	 
	 If   $k$ is a sink of $Q$,  then $i_2,\dots, i_n, i_1=k$ is an admissible sequence of sinks of $Q'$ and we therefore have that  
	 \[ f_c(\mathbf x') =f_c(\mu_k(\mathbf x)) = \mu_k\mu_ {i_n}\dots\mu_{i_1}\mu_k(\mathbf x') =\mu_{k}\mu_ {i_n}\dots\mu_{i_2}(\mathbf x') = g_c(\mathbf x').\]  
	 
	 This completes the proof.
\end{proof}

We call the cluster  automorphism $f_c$ associated to the Coxeter mutation $\mu_c$  in  Lemma \ref{coxeterautomorphism} the {\em Coxeter automorphism}.  In the  following sections, we will mainly consider  generalized frieze varieties of affine quivers with respect to  Coxeter automorphisms.

By Lemma  \ref{coxeterautomorphism}, if $Q$ is an acyclic quiver and $\mathbf a$ a general specialization for $Q$ and $f_c$, then the  affine variety $X(Q, f_c, \mathbf a)$ is precisely the generalized frieze variety  $X(Q, \mathbf a)$ defined by Igusa and Schiffler in \cite{igusa2021frieze}. 
If in addition $\mathbf a = \mathbf 1$,
the variety $X(Q,f_c,\mathbf 1)$ is  precisely the frieze variety  $X(Q)$ defined for every acyclic quiver by Lee, Li, Mills, Schiffler and Seceleanu in \cite{lee2020frieze}.

The following Lemma is inspired by [Lemma 3.10,  \cite{assem2012automorphism}].

\begin{lemma}\label{mm}
	Let $Q$ be an acyclic quiver whose vertices are labeled by an admissible sequence of sinks $1,\dots,n$, and let  $\sigma \in S_n$ satisfy the following conditions:
	\begin{enumerate}
		\item[-] $\sigma^2=id$,
		\item[-] $\sigma$ induces an anti-automorphism of  $Q$,
		\item[-]  $\sigma(1),\dots,\sigma(n)$ is an admissible sequence of sources.  
	\end{enumerate}   
	Let  $f_{\sigma}$ be the inverse cluster automorphism induced by $\sigma$ and $f_c$  the Coxeter automorphism.  Then for any $t\in \mathbb Z$, we have that 
	\[    f^2_{\sigma} = id,\;\;\; \text{and}\;\;\;  f_{\sigma} f_c^tf_{\sigma} = f^{-t}.                             \]
\end{lemma}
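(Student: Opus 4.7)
The identity $f_\sigma^2 = \id$ is immediate: since $\sigma^2 = \id$, applying $f_\sigma$ twice to a generator yields $f_\sigma^2(x_i) = x_{\sigma^2(i)} = x_i$, so $f_\sigma^2$ is the identity on $\mathcal A$. Using this, the composition telescopes as $(f_\sigma f_c f_\sigma)^t = f_\sigma f_c^t f_\sigma$, so once one knows $f_\sigma f_c f_\sigma = f_c^{-1}$ the general relation $f_\sigma f_c^t f_\sigma = f_c^{-t}$ follows for every $t \in \mathbb Z$.

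To prove $f_\sigma f_c f_\sigma = f_c^{-1}$, I would use that both sides are $\mathbb Z$-algebra automorphisms of $\mathcal A$, and, by Lemma~\ref{cllp}, a cluster automorphism is determined by its value on a single cluster. Hence it suffices to verify $f_\sigma f_c f_\sigma(\mathbf x) = f_c^{-1}(\mathbf x)$; equivalently, after applying $f_\sigma$ on the right, to verify $f_\sigma f_c(\mathbf x) = f_c^{-1} f_\sigma(\mathbf x)$. For the left-hand side, the intertwining property of $f_\sigma$ with mutations (displayed just before Lemma~\ref{mm}) and the expression $\mu_c = \mu_n \cdots \mu_1$ give
\[
f_\sigma f_c(\mathbf x) = f_\sigma(\mu_n \cdots \mu_1(\mathbf x)) = \mu_{\sigma(n)} \cdots \mu_{\sigma(1)}(f_\sigma(\mathbf x)).
\]
For the right-hand side, the hypothesis that $\sigma(1),\ldots,\sigma(n)$ is an admissible sequence of sources of $Q$ means it is an admissible sequence of sinks of $Q^{\op}$; so Lemma~\ref{coxeterautomorphism} applied to the acyclic seed $(\mathbf x, Q^{\op})$ identifies $f_c^{-1}$ with the Coxeter automorphism of $Q^{\op}$ (both send $\mathbf x$ to $\mu_{\sigma(n)} \cdots \mu_{\sigma(1)}(\mathbf x) = \mu_c^{-1}(\mathbf x)$, and uniqueness does the rest). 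Applying the analogous intertwining property of this Coxeter automorphism to mutations performed in the seed $(\mathbf x, Q^{\op})$, one rewrites $f_c^{-1} f_\sigma(\mathbf x)$ as $\mu_{\sigma(n)} \cdots \mu_{\sigma(1)}(f_\sigma(\mathbf x))$ as well, closing the argument.

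The main obstacle is the careful bookkeeping between the quivers $Q$ and $Q^{\op}$ and between admissible sink and source sequences, to ensure that the intertwining property for $f_\sigma$ and the one for $f_c^{-1}$ (viewed as the Coxeter automorphism of $Q^{\op}$) can be applied consistently to the same tuple $f_\sigma(\mathbf x)$. Conceptually the identity reflects the familiar fact from reflection group theory that an anti-automorphism conjugates a Coxeter element into its inverse; the work is in transporting this fact accurately across the cluster-algebraic setting.
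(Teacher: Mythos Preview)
Your proposal is correct and follows essentially the same approach as the paper. The paper's proof is slightly more direct: it computes $f_\sigma f_c f_\sigma(x_i)$ elementwise in one chain, ending with $\mu_{\sigma(n)}\cdots\mu_{\sigma(1)}(x_i) = f_c^{-1}(x_i)$, using only the intertwining property of $f_\sigma$ and the fact that $\sigma(1),\dots,\sigma(n)$ is an admissible source sequence (hence a sink sequence for $Q^{\op}$). Your detour through an ``analogous intertwining property'' for $f_c^{-1}$ is unnecessary, since $f_\sigma(\mathbf x)$ equals $\mathbf x$ as a cluster and so $f_c^{-1}(f_\sigma(\mathbf x)) = f_c^{-1}(\mathbf x) = \mu_{\sigma(n)}\cdots\mu_{\sigma(1)}(\mathbf x)$ directly; but this is a presentational point, not a gap.
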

\begin{proof}
	Since  $f_{\sigma}(x_i) = x_{\sigma(i)}$, it follows from $\sigma^2=id$ that $f_{\sigma}^2 = id$. It is enough to show that $f_{\sigma} f_cf_{\sigma} = f^{-1}_c$.  Indeed, we have that
	\[  f_{\sigma} f_cf_{\sigma}  (x_i)  =      f_{\sigma} (f_c  (x_{\sigma(i)}))  =   f_{\sigma}(\mu_n\dots\mu_1(x_{\sigma(i)}))=
	\mu_{\sigma(n)}\dots\mu_{\sigma(1)} (f_{\sigma}(x_{\sigma(i)}))   =  f_c^{-1}(x_i)                 \]
	holds for each $i\in Q_0$.
\end{proof}

Let $Q$ be an acyclic quiver with an anti-automorphism $\sigma$ satisfying conditions in Lemma \ref{mm}. Let  $f^t_c(x_i) =  L_i(x_1,\dots,x_n)$,  by Lemma \ref{mm}, we get that
\begin{equation}\label{mmm}
	   f_c^{-t}( x_i) =   f_{\sigma} f_c^tf_{\sigma}(x_i)=  L_{\sigma(i)} (x_{\sigma(1)}, \dots, x_{\sigma(n)}).          \end{equation}
 The action of $\sigma$ on $\mathbb C^n$  is naturally given by $\sigma.(a_1,\dots,a_n) = (a_{\sigma(1)}, \dots, a_{\sigma(n)})$.  It follows from (\ref{mmm})  that  
 \begin{equation}\label{mmmm}
 f_c^{-t}(\mathbf 1)  =  \sigma.(f^t_c(\mathbf 1)), \quad t\in \mathbb Z.
\end{equation}

Examples of acyclic quivers with anti-automorphisms satisfying conditions in Lemma \ref{mm} include affine quivers 
of types $\tilde{\mathbb A}_{p,q}$ and $\tilde{\mathbb D}_n$
we will study in the following section.

We conclude this subsection by  giving a corollary showing that the generalized frieze variety of an acyclic quiver associated to the Coxeter automorphism is the same as that of the opposite quiver associated to its  Coxeter automorphism, which was speculated in \cite{igusa2021frieze}.

\begin{corollary}
Suppose $Q$ is an acyclic quiver and $Q^{\op}$ is the opposite quiver of $Q$.  Let $f_c$ and $g_c$ be the  Coxeter automorphisms of $Q$ and $Q^{\op}$, respectively. If $\mathbf a$ is a general specialization for both $(Q,f_c)$ and $(Q^{op}, g_c)$, then 
\[ X(Q,f_c, \mathbf a) = X(Q^{\op}, g_c, \mathbf a). \]   
\end{corollary}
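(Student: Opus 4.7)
The plan is to identify $g_c$ with $f_c^{-1}$ as cluster automorphisms of the common algebra $\mathcal A(\mathbf x, Q) = \mathcal A(\mathbf x, Q^{\op})$, after which the statement follows immediately from Proposition \ref{inverse}. Indeed, as observed near the beginning of the excerpt, $\mathcal A(\mathbf x, Q)$ and $\mathcal A(\mathbf x, Q^{\op})$ are literally the same subalgebra of the ambient field and share the same clusters, so both $f_c$ and $g_c$ live as cluster automorphisms of a single algebra $\mathcal A$. Since each cluster automorphism is uniquely determined by its value on one cluster, it suffices to verify that $g_c(\mathbf x) = f_c^{-1}(\mathbf x)$.

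To check this I would exploit the freedom in choosing an admissible sink sequence. Label the vertices of $Q$ by $1, \ldots, n$ so that $i > j$ whenever $i \to j$ in $Q$; then $1, 2, \ldots, n$ is an admissible sequence of sinks of $Q$ and $f_c(\mathbf x) = \mu_n \cdots \mu_1(\mathbf x)$. After reversing arrows, in $Q^{\op}$ every arrow goes from a smaller to a larger label, so $n$ is a sink of $Q^{\op}$; a short induction (using that mutating at $k$ in $Q^{\op}$ only flips the arrows incident to $k$, since $k$ has no outgoing arrows at the stage we mutate it) shows that $n, n-1, \ldots, 1$ is an admissible sequence of sinks of $Q^{\op}$. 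Hence $g_c(\mathbf x) = \mu_1 \mu_2 \cdots \mu_n(\mathbf x)$. Because each $\mu_i$ is involutive on seeds, the composition $\mu_1 \cdots \mu_n$ inverts $\mu_c = \mu_n \cdots \mu_1$ on $(\mathbf x, Q)$, so $g_c(\mathbf x) = \mu_c^{-1}(\mathbf x) = f_c^{-1}(\mathbf x)$, and consequently $g_c = f_c^{-1}$ as cluster automorphisms of $\mathcal A$.

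With this identification in hand the conclusion is one line: by the definition of the generalized frieze variety, $X(Q^{\op}, g_c, \mathbf a) = X(Q, f_c^{-1}, \mathbf a)$, and the hypothesis that $\mathbf a$ is a general specialization for $(Q^{\op}, g_c)$ means exactly that $f_c^{-t}(\mathbf a) \in (\mathbb C^*)^n$ for all $t \geq 0$, i.e.\ $\mathbf a$ is a general specialization for $(Q, f_c^{-1})$. Proposition \ref{inverse} then gives $X(Q, f_c, \mathbf a) = X(Q, f_c^{-1}, \mathbf a)$, combining with the previous equality to yield $X(Q, f_c, \mathbf a) = X(Q^{\op}, g_c, \mathbf a)$. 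I do not expect a genuine obstacle here; the only point requiring some care is the bookkeeping with admissible sink sequences when passing from $Q$ to $Q^{\op}$, and once the equality $g_c = f_c^{-1}$ is confirmed, everything else is formal.
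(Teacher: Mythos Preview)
Your proposal is correct and follows essentially the same route as the paper: you identify $g_c$ with $f_c^{-1}$ by noting that the reversed admissible sink sequence $n,n-1,\ldots,1$ serves as an admissible sink sequence for $Q^{\op}$, so $g_c(\mathbf x)=\mu_1\cdots\mu_n(\mathbf x)=f_c^{-1}(\mathbf x)$, and then invoke Proposition~\ref{inverse}. The paper's proof is the same argument, stated a bit more tersely.
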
 
\begin{proof}
Assume that the Coxeter mutation of  $Q$ is given by $\mu_c=\mu_n\dots \mu_1$, then  the Coxeter mutation of  $Q^{\op}$ is given by $\mu_c^{op}=\mu_1\dots \mu_n$. Notice that  the cluster algebras $\mathcal{A}(\mathbf x, Q)$ and  $\mathcal{A}(\mathbf x, Q^{op})$ are equal and share the same clusters.  The Coxeter automorphism  $g_c$ of $Q^{op}$ is the inverse of the Coxeter automorphism  $f_c$ of
$Q$.  Thus by Proposition \ref{inverse},
\[     X(Q^{\op}, g_c, \mathbf a) =  X(Q, f_c^{-1}, \mathbf a) =         X(Q, f_c, \mathbf a).               \]
This finishes the proof.
\end{proof}

\subsection{Folding theory}
In this subsection, we work with valued quivers which naturally correspond to skew-symmetrizable matrices, and we refer to section 9 of \cite{keller2013periodicity} and section 4.4 of \cite{fomin2017introduction} for more details about valued quivers and folding techniques.

Firstly,  let us briefly recall some facts about folding theory. Let $Q$ be a quiver and $\Gamma$ a group acting on the vertex set of $Q$.   We say $Q$ is {\em $\Gamma$-admissible}  if  the following conditions are satisfied:
\begin{enumerate}
\item[-] for each $g\in \Gamma$, $g$ induces an automorphism of the quiver $Q$, i.e., for any $i,j\in Q_0$,
                       \[  | i\rightarrow j  | =|g(i) \rightarrow g(j)|,  \]
where $|i\rightarrow j|$ is the number of arrows from $i$ to $j$.
\item[-] there are no arrows among vertices in the same $\Gamma$-orbit.
\item[-]  there are no paths of the form  $i\rightarrow j\rightarrow g(i)$  for $i,j\in Q_0$ and $g\in \Gamma$.
  \end{enumerate}

Let  $Q$ be $\Gamma$-admissible.  For any vertex $i$, the $\Gamma$-orbit of $i$ is denoted by $\Gamma i$, the  mutation sequence  $\mu_{\Gamma i} :=\prod_{i'\in \Gamma i} \mu_{i'}$ is well-defined for $Q$ and is called the {\em orbit mutation}. The {\em folded  (valued) quiver}  $Q^{\Gamma}$ has the vertex set $Q^\Ga_0:=\{\Gamma i, i\in Q_0\}$, and  for any two different vertices $\Gamma i$ and $\Gamma j$,  there is a valued arrow $\Gamma i\xrightarrow{(v_{ij},v_{ji})}\Gamma j$  with values given by  the following formulas:
\[    v_{ij} = \begin{vmatrix}\sum\limits_{i'\in \Gamma i} (|i'\rightarrow j|-|j\rightarrow i'|)\end{vmatrix} ,\quad \text{and}\;\,   v_{ji} = \begin{vmatrix} \sum\limits_{j'\in \Gamma j} (|j'\rightarrow i| -  |i\rightarrow j'|  ) \end{vmatrix}.                 \]

We say  $Q$ is {\em globally foldable with respect to $\Gamma$} if $Q$ is   $\Gamma$-admissible and $\mu_{\Gamma i_k}\dots\mu_{\Gamma i_1}(Q)$ is  $\Gamma$-admissible for any sequence $i_1,\dots, i_k$.  In the rest of this subsection, we always assume that $Q$ is  globally foldable with respect to $\Gamma$.

Let  $(\mathbf x, Q)$ be a seed such that  $Q$ is globally foldable with respect to $\Gamma$,  
and let  $\pi$  the  homomorphism given by 
\[    \begin{split} \pi: \,\,          \mathbb Q[x^{\pm 1}_i, i   \in Q_0] &\longrightarrow      \mathbb Q[x^{\pm 1}_{\Gamma i},  \Gamma i\in Q_0^{\Gamma} ]\\   x^{\pm 1}_i& \longmapsto  x_{\Gamma i}^{\pm 1}.   \end{split}      \]

Each element $g\in \Gamma$ naturally acts on $\mathbb Q[x_i^{\pm 1}, i\in Q_0]$ by $gx_i=x_{g(i)}$, and  we say a seed $(\mathbf x',  Q')$ is {\em $\Gamma$-invariant} if $g.\mathbf x' = \mathbf x'$ for every $g\in \Gamma$ and $Q'$ is $\Gamma$-admissible.  
The following lemma shows that each seed obtained from a sequence of orbit
mutations is $\Gamma$-invariant\footnote{The converse of this assertion can be false. Indeed, the quiver $Q$ in Example \ref{markovunfolding} is globally foldable with respect to  $\Gamma=\langle (14)(25)(36)\rangle$ and the folded quiver is the Markov quiver in Example \ref{markovexa}.  The  cluster  $f(\mathbf x)$ is $\Gamma$-invariant, while if it can be obtained by orbit mutations, then $\pi(f(\mathbf x))$ is a cluster of Markov quiver. This can not happen, because  $(9,9,9)=\pi(f(\mathbf x))|_{x_{\Gamma 1} = x_{\Gamma 2}=x_{\Gamma 3}=1}$ does not belong to the zero locus of $x^2+y^2+z^2-3xyz$.}, and  it   is easy to prove by the induction on the length  of the sequence of orbit mutations.
\begin{lemma}\label{igi}
Let $(\mathbf x', Q')$ be a seed  obtained from  $(\mathbf x,Q)$ by a sequence of  orbit mutations.  Write $x'_i = L_i(x_1,\dots,x_n)$ as a Laurent polynomial  of $\mathbf x$.  Then 
for any  $i\in Q_0$ and $g\in \Gamma$, we have that 
\[      x'_{g(i)} = L_i  (x_{g(1)}, \dots, x_{g(n)}).      \]
In particular, $\pi(x'_i) = \pi(x'_{g(i)})$  for any $i\in Q_0$ and $g\in \Gamma$.
\end{lemma}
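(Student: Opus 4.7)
The plan is to argue by induction on the length $k$ of the sequence of orbit mutations taking $(\mathbf x,Q)$ to $(\mathbf x',Q')$. The case $k=0$ is immediate: $L_i = x_i$ so $L_i(x_{g(1)},\dots,x_{g(n)}) = x_{g(i)}$. For the inductive step, suppose the conclusion holds for a seed $(\mathbf y, Q'')$ reached after $k-1$ orbit mutations, with $y_i = M_i(x_1,\dots,x_n)$ satisfying $y_{g(i)} = M_i(x_{g(1)},\dots,x_{g(n)})$ for all $g\in\Gamma$. Let $(\mathbf x',Q') = \mu_{\Gamma j}(\mathbf y, Q'')$.

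Since $Q''$ is $\Gamma$-admissible, the vertices in the orbit $\Gamma j$ are pairwise non-adjacent, so the individual mutations composing $\mu_{\Gamma j}$ commute; hence for $i\notin \Gamma j$ we have $x'_i = y_i$ and the inductive hypothesis transfers verbatim. For $i\in\Gamma j$, the mutation formula gives
\[
x'_i \;=\; \frac{\prod_{j'\to i \text{ in } Q''} y_{j'} \;+\; \prod_{i\to j' \text{ in } Q''} y_{j'}}{y_i}.
\]
Apply this same formula at the vertex $g(i)$, which also lies in $\Gamma j$. Because $g$ acts as a quiver automorphism of $Q''$, the arrows ending (resp.\ starting) at $g(i)$ correspond bijectively via $j'\mapsto g^{-1}(j')$ to arrows ending (resp.\ starting) at $i$. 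Reindexing the products using this bijection and then applying the inductive hypothesis $y_{g(j'')} = M_{j''}(x_{g(1)},\dots,x_{g(n)})$ and $y_{g(i)} = M_i(x_{g(1)},\dots,x_{g(n)})$, we obtain
\[
x'_{g(i)} \;=\; \frac{\prod_{j''\to i} M_{j''}(x_{g(1)},\dots,x_{g(n)}) \;+\; \prod_{i\to j''} M_{j''}(x_{g(1)},\dots,x_{g(n)})}{M_i(x_{g(1)},\dots,x_{g(n)})} \;=\; L_i(x_{g(1)},\dots,x_{g(n)}),
\]
where $L_i$ is precisely the Laurent polynomial obtained by substituting $y_{j''} = M_{j''}(\mathbf x)$ into the expression above. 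This closes the induction.

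The ``in particular'' clause follows by applying $\pi$: since $\pi(x_{g(k)}) = x_{\Gamma g(k)} = x_{\Gamma k} = \pi(x_k)$ for every $k$ and every $g\in\Gamma$, the substitution $x_k\mapsto x_{g(k)}$ becomes trivial after $\pi$, so $\pi(x'_{g(i)}) = \pi(L_i(x_{g(1)},\dots,x_{g(n)})) = \pi(L_i(x_1,\dots,x_n)) = \pi(x'_i)$.

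The only nontrivial bookkeeping is in the inductive step: one must invoke $\Gamma$-admissibility of $Q''$ twice, once to know that the orbit mutation $\mu_{\Gamma j}$ is literally a product of pairwise commuting individual mutations (so that the single-step exchange relation may be used at each $i\in\Gamma j$), and a second time to reindex arrows at $g(i)$ into arrows at $i$ via the action of $g$. Everything else is a direct substitution, so there is no real obstacle beyond organizing these two uses of $\Gamma$-admissibility cleanly.
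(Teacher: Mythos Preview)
Your proof is correct and follows exactly the approach the paper indicates: the paper itself does not write out a proof but simply remarks that the lemma ``is easy to prove by the induction on the length of the sequence of orbit mutations,'' which is precisely what you have carried out in detail. Your explicit bookkeeping of the two uses of $\Gamma$-admissibility (commutativity of the mutations within an orbit, and the reindexing of arrows via the automorphism $g$) fills in the argument the paper leaves to the reader.
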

 
We  then  define a new seed  $(\mathbf x', Q')^{\Gamma} :=   ({\mathbf x'}^{\Gamma}, {Q'}^{\Gamma})$ associated to each $\Gamma$-invariant seed $(\mathbf x', Q')$ such that \[{\mathbf x'}^{\Gamma} :=  (x'_{\Gamma i}:=\pi(x'_{i}),  \Gamma i \in Q^{\Gamma}_0).\]

\begin{lemma}\label{fold}
If  $Q$ is globally foldable with respect to $\Gamma$, then  for any  sequence $i_1,\dots,i_k$, we have that
\[      (\mu_{\Gamma i_k}\dots\mu_{\Gamma i_1}(\mathbf x, Q) )^{\Gamma} =       \mu_{\Gamma i_k}\dots\mu_{\Gamma i_1}(\mathbf x^{\Gamma}, Q^{\Gamma}).           \]
\end{lemma}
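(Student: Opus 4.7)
The plan is to proceed by induction on the length $k$ of the mutation sequence. The base case $k=0$ is immediate, since both sides reduce to $(\mathbf{x}^\Gamma, Q^\Gamma)$. For the inductive step, it suffices to establish the single-step identity
\[
(\mu_{\Gamma i}(\mathbf{y}, P))^\Gamma = \mu_{\Gamma i}((\mathbf{y}, P)^\Gamma)
\]
for any $\Gamma$-invariant, globally $\Gamma$-foldable seed $(\mathbf{y}, P)$ and any vertex $i$. By global foldability every intermediate quiver along the sequence is $\Gamma$-admissible, and Lemma~\ref{igi} guarantees that every intermediate cluster is $\Gamma$-invariant, so both sides are well-defined; moreover, $\Gamma$-admissibility forbids arrows within the orbit $\Gamma i$, hence the mutations $\mu_{i'}$ for $i' \in \Gamma i$ pairwise commute, so $\mu_{\Gamma i}$ is unambiguous.

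For the cluster-variable half, the vertices outside $\Gamma i$ contribute unchanged coordinates on both sides. For the orbit $\Gamma i$ itself, I pick a representative $i$ and expand
\[
y'_i = \frac{\prod_{j' \to i} y_{j'} + \prod_{i \to k'} y_{k'}}{y_i}
\]
by regrouping the sources $j'$ and targets $k'$ according to their $\Gamma$-orbits. After applying $\pi$, the exponent of $y_{\Gamma j}$ in the two products becomes $\sum_{j' \in \Gamma j} |j' \to i|$ and $\sum_{k' \in \Gamma k} |i \to k'|$, respectively. The crucial input is that $\Gamma$-admissibility rules out paths of the shape $j_1 \to i \to g(j_1)$, so if both $j_1 \in \Gamma j$ with $j_1 \to i$ and $j_2 \in \Gamma j$ with $i \to j_2$ existed, writing $j_2 = g(j_1)$ would produce such a forbidden path. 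Consequently at least one of the two sums above vanishes, and their absolute difference, which by definition equals the folded valuation $v_{ji}$ or $v_{ik}$, coincides with the surviving sum. Hence $\pi(y'_i)$ matches term by term the formula for the mutated cluster variable at $\Gamma i$ in the folded valued seed.

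The remaining task is to match the valued quivers $(\mu_{\Gamma i}(P))^\Gamma$ and $\mu_{\Gamma i}(P^\Gamma)$. The plan here is to recast everything in terms of skew-symmetrizable exchange matrices and apply the Fomin--Zelevinsky matrix mutation formula. For orbits $\Gamma j$ and $\Gamma k$ distinct from $\Gamma i$, one computes the signed arrow count between $\Gamma j$ and $\Gamma k$ after mutation. On the unfolded side this means summing, over all $i'' \in \Gamma i$ and all $j' \in \Gamma j$, $k' \in \Gamma k$, the contributions of new arrows created or reversed by the commuting mutations $\mu_{i''}$; on the folded side it is a single application of the mutation formula to $v_{\bullet\bullet}$. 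The main obstacle, and the step that demands the most careful bookkeeping, is precisely this quiver-level tally: one must check that the two-paths $j' \to i'' \to k'$ with $i'' \in \Gamma i$ repackage correctly into valued two-paths $\Gamma j \to \Gamma i \to \Gamma k$ in $P^\Gamma$, and that the sign-cancellations in the mutation formula respect the folding. The orbit-direction consistency established in the previous paragraph (at most one of the two sums is nonzero) is again the key mechanism that prevents spurious cross-terms and forces the two computations to agree. Once this matching is verified, the single-step identity holds and the induction closes.
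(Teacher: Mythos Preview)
The paper does not actually prove Lemma~\ref{fold}: it is stated without proof, as a standard fact about folding, with the reader directed at the outset of the subsection to \cite{keller2013periodicity} and \cite{fomin2017introduction} for details. So there is no ``paper's own proof'' to compare against.

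That said, your proposal follows precisely the standard argument one finds in those references: induction on the length of the orbit-mutation sequence, reducing to a single orbit mutation, and then separately verifying the cluster-variable identity and the quiver (matrix) identity. Your treatment of the cluster-variable side is correct and reasonably complete; the use of the third $\Gamma$-admissibility condition to rule out simultaneous incoming and outgoing arrows between $i$ and a fixed orbit $\Gamma j$ is exactly the point. The quiver side you only sketch: you identify the right mechanism (commuting mutations at the orbit, repackaging two-paths $j'\to i''\to k'$ over $i''\in\Gamma i$ into valued two-paths, and sign consistency from the same direction-exclusivity), but you stop short of the actual bookkeeping. If you want a self-contained proof rather than a plan, you should carry out that computation explicitly, e.g.\ by writing the skew-symmetric matrix $B$ of $P$, the folded matrix $B^\Gamma$ with entries $b^\Gamma_{\Gamma j,\Gamma k}=\sum_{j'\in\Gamma j}b_{j'k}$, and checking that $(\mu_{\Gamma i}B)^\Gamma$ and $\mu_{\Gamma i}(B^\Gamma)$ agree entrywise via the mutation formula; the direction-exclusivity ensures the $[\,\cdot\,]_+$ terms behave well under the orbit sum. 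As written, the proposal is a correct outline rather than a complete proof.
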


Let $\mathcal A=\mathcal A(\mathbf x,Q)$ and $\mathcal A^\Gamma = \mathcal A(\mathbf x^\Gamma,Q^\Gamma)$ be the corresponding cluster algebras.   Notice that  $\mathcal A^\Gamma\subset \pi(\mathcal A)$.

\begin{definition}
We say a pair $(f,h)$ is a {\em compatible pair of cluster automorphisms} if the following conditions are satisfied:
\begin{enumerate}
\item[-] $f$ is a cluster automorphism of $\mathcal A$ and $h$ is a cluster automorphism of $\mathcal A^\Gamma$;
\item[-] $f(\mathbf x)$ is obtained from $\mathbf x$ by a sequence of  orbit mutations and $f(\mathbf x)^\Gamma  =  h(\mathbf x^\Gamma)$.
\end{enumerate}
\end{definition}

\begin{example}
	Let $Q$ be an acyclic quiver and $\Gamma\subset \Aut(Q)$ is a subgroup.  It is clear  that $Q$ is $\Gamma$-admissible.  Demonet proved in \cite{demonet2008categorification} that $Q$ is globally foldable with respect to $\Gamma$.  Suppose the Coxeter automorphisms for $Q$ and $Q^\Gamma$ are denoted by $f_c$ and $h_c$, respectively. Then $(f_c,h_c)$ is a compatible pair.  Indeed, assume that $ \mu_{\Gamma i_r}\dots\mu_{\Gamma i_1}$ is the Coxeter mutation of $Q^\Gamma$, then each vertex in $\Gamma i_1$ of $Q$ is a sink.   Similarly each vertex in $\Gamma i_{k+1}$ of $\mu_{\Gamma i_{k}}\dots\mu_{\Gamma i_1}(Q)$ is a sink for $1\leq k\leq r-1$.  This implies $\mu_{\Gamma i_r}\dots\mu_{\Gamma i_1}$ is the Coxeter mutation of $Q$, therefore  $f(\mathbf x)^\Gamma  =  h(\mathbf x^\Gamma)$.	
	\end{example}

\begin{lemma}\label{cfh}
Let $(f,h)$ be a compatible pair of cluster automorphisms.  Then  for any $t\geq 0$, we have that
\[      (f^t(\mathbf x))^\Gamma   =  h^t(\mathbf x^\Gamma).         \]
\end{lemma}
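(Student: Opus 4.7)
My plan is to prove the statement by a direct induction on $t$, the base case $t=0$ being immediate. The key structural observation I would extract from the cluster-automorphism axioms is that, in the natural positional labelling where the $i$-th entry of $f(\mathbf y)$ is $f(y_i)$, the axiom (CA2) is equivalent to the commutation relation $f\circ \mu_i=\mu_i\circ f$ for every vertex $i$; indeed, the cluster variable $f(y_i)$ appearing on the right-hand side of (CA2) occupies position $i$ of $f(\mathbf y)$ by construction of the labelling, so ``$\mu_{f(y_i)}$ acting on $f(\mathbf y)$'' is exactly ``$\mu_i$ acting on $f(\mathbf y)$''.

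Writing $M=\mu_{\Gamma i_k}\cdots\mu_{\Gamma i_1}$ for the orbit mutation sequence furnished by the compatibility hypothesis, so that $f(\mathbf x)=M(\mathbf x)$, I would iterate the commutation above along $M$ (applying (CA2) at each intermediate seed, which is legitimate by the equivalence of the (CA2) conditions at any seed) to obtain $f\circ M=M\circ f$. A short induction then yields $f^t(\mathbf x)=M^t(\mathbf x)$ for every $t\geq 0$, the inductive step being
\[
f^{t+1}(\mathbf x)=f(M^t(\mathbf x))=M^t(f(\mathbf x))=M^t(M(\mathbf x))=M^{t+1}(\mathbf x).
\]
Exactly the same argument applied to $h$ on $\mathcal A^\Gamma$, using $h(\mathbf x^\Gamma)=(f(\mathbf x))^\Gamma=(M(\mathbf x))^\Gamma=M(\mathbf x^\Gamma)$ (where the last equality is Lemma \ref{fold}), gives $h^t(\mathbf x^\Gamma)=M^t(\mathbf x^\Gamma)$. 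Iterating Lemma \ref{fold} once more then yields the desired identity
\[
(f^t(\mathbf x))^\Gamma=(M^t(\mathbf x))^\Gamma=M^t(\mathbf x^\Gamma)=h^t(\mathbf x^\Gamma),
\]
where the hypothesis of Lemma \ref{fold} at each intermediate step is guaranteed by the global foldability of $Q$.

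The only step I expect to require real care is the initial reformulation of (CA2): I must be sure that the positional labelling remains coherent across the iterated orbit mutations defining $M^t$, so that ``$f\circ\mu_i=\mu_i\circ f$'' is genuinely an identity between labelled clusters rather than merely between underlying sets of cluster variables. This is essentially a book-keeping point, relying on the fact that mutations within a single $\Gamma$-orbit commute by $\Gamma$-admissibility (no arrows among vertices of $\Gamma i$) and that orbit mutations preserve the $\Gamma$-equivariant labelling. Once this is settled, the argument reduces to the one-line induction displayed above.
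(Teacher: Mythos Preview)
Your proposal is correct and follows essentially the same route as the paper's proof: both arguments reduce to establishing $f^t(\mathbf x)=M^t(\mathbf x)$ and $h^t(\mathbf x^\Gamma)=M^t(\mathbf x^\Gamma)$ via the cluster-automorphism commutation (your unpacking of (CA2)), and then conclude by iterating Lemma~\ref{fold}. The paper simply compresses your induction and your careful discussion of positional labelling into the single phrase ``since $f$ and $h$ are cluster automorphisms'', relying implicitly on the same commutation already spelled out in the proof of Lemma~\ref{coxeterautomorphism}.
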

\begin{proof}
Since $f(\mathbf x)$ is obtained from $\mathbf x$ by orbit mutations, we may assume that 
$f(\mathbf x) =  \mu_{\centerdot}(\mathbf x)$ with $\mu_{\centerdot}=\mu_{\Gamma i_k} \dots \mu_{\Gamma i_1}$.  Therefore by Lemma \ref{fold}, we have
  \[h(\mathbf x^\Gamma) =   f(\mathbf x)^{\Gamma} = ( \mu_{\centerdot}(\mathbf x))^{\Gamma}  =  \mu_{\centerdot}(\mathbf x^{\Gamma}).\] 
Since $f$ and $h$ are cluster automorphisms, thus 
\[  (f^t(\mathbf x))^\Gamma = ( \mu_{\centerdot}^t(\mathbf x))^{\Gamma} =  \mu_{\centerdot}^t(\mathbf x^{\Gamma}) =  h^t(\mathbf x^\Gamma)  \]
holds for any $t\geq 0$.
\end{proof}

The group $\Gamma$ acts on $\mathbb C^n$ by  $g.\mathbf a =(a_{g(1)},\dots, a_{g(n)})$ for each $g\in \Gamma$ and $\mathbf a= (a_1,\dots,a_n)\in \mathbb C^n$.
 A point $\mathbf a\in \mathbb C^n$ is  {\em $\Gamma$-invariant} if
  $g.\mathbf a =\mathbf a$ for any $g\in \Gamma$, which is equivalent to  the condition  $a_i = a_{g(i)}$ holds for any  $i \in Q_0$ and any $g\in \Gamma$.  For example, $\mathbf 1=(1,\dots,1)$ is always $\Gamma$-invariant.
If  $\mathbf a$ is $\Gamma$-invariant, we define $\mathbf a^\Gamma := ( a_{\Gamma i}, \Gamma i \in Q_0^{\Gamma})$ satisfying  that $a_{\Gamma i} = a_i$ for any $i$. 

\begin{lemma}\label{com}
Let $(f,h)$ be a compatible pair of cluster automorphisms, and $\mathbf a$ be a $\Gamma$-invariant general specialization of $(Q,f)$. Then the following statements hold:
\begin{enumerate}
\item[(i)] $\mathbf a^\Gamma$ is a general specialization of  $(Q^\Gamma, h)$;
  \item[(ii)]  for any $t\geq 0$, $f^t(\mathbf a)$ is $\Gamma$-invariant, and 
 \item[(iii)]  $f^t(\mathbf a)^{\Gamma} =  h^t(\mathbf a^{\Gamma})$.
\end{enumerate}
\end{lemma}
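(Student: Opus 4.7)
The plan is to establish (ii) first, derive (iii) from (ii) together with Lemma \ref{cfh} by specialization, and then obtain (i) directly from (iii). All three statements reduce, by specializing $\mathbf x = \mathbf a$, to identities already available at the level of Laurent polynomials, so the proof is largely bookkeeping once those identities are in hand.

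For (ii) I would use the hypothesis that $f(\mathbf x) = \mu_{\centerdot}(\mathbf x)$ for some sequence $\mu_{\centerdot}$ of orbit mutations, so $f^t(\mathbf x) = \mu_{\centerdot}^t(\mathbf x)$ is itself obtained from $\mathbf x$ by a sequence of orbit mutations. Writing $(f^t(\mathbf x))_i = L_{t,i}(x_1,\ldots,x_n)$, Lemma \ref{igi} gives $L_{t,g(i)}(x_1,\ldots,x_n) = L_{t,i}(x_{g(1)},\ldots,x_{g(n)})$ for every $g\in\Gamma$. Specializing $\mathbf x = \mathbf a$ and invoking $a_{g(j)} = a_j$ (the $\Gamma$-invariance of $\mathbf a$) turns this into $(f^t(\mathbf a))_{g(i)} = (f^t(\mathbf a))_i$, so $f^t(\mathbf a)$ is $\Gamma$-invariant for every $t\geq 0$.

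For (iii), Lemma \ref{cfh} gives $(f^t(\mathbf x))^\Gamma = h^t(\mathbf x^\Gamma)$, whose $\Gamma i$-component reads $\pi(L_{t,i}(\mathbf x)) = (h^t(\mathbf x^\Gamma))_{\Gamma i}$ in $\mathbb Q[x_{\Gamma j}^{\pm 1}]$. I then specialize $x_{\Gamma j}\mapsto a_{\Gamma j} = a_j$. Because $\pi$ acts by renaming $x_j$ as $x_{\Gamma j}$, the left-hand side evaluates to $L_{t,i}(a_1,\ldots,a_n) = (f^t(\mathbf a))_i$, which by (ii) is precisely the $\Gamma i$-entry of $(f^t(\mathbf a))^\Gamma$; the right-hand side evaluates to $(h^t(\mathbf a^\Gamma))_{\Gamma i}$. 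This proves (iii), and (i) is then immediate: since $\mathbf a$ is a general specialization of $(Q,f)$, each $f^t(\mathbf a)\in(\mathbb C^*)^n$, so the entries of $(f^t(\mathbf a))^\Gamma = h^t(\mathbf a^\Gamma)$ are all nonzero, i.e., $\mathbf a^\Gamma$ is a general specialization of $(Q^\Gamma,h)$.

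The substantive content already sits in Lemmas \ref{igi} and \ref{cfh}, so I anticipate no real obstacle. The only point that requires care is to keep the two roles of $\pi$ clearly separated---on the one hand $\pi$ is the algebra homomorphism that merges the variables within each $\Gamma$-orbit, and on the other hand it corresponds to specializing a $\Gamma$-invariant point $\mathbf a$ as $\mathbf a^\Gamma$---but Lemma \ref{igi} is tailored exactly to reconcile the two, so the verification goes through routinely.
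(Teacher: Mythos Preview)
Your proposal is correct and follows essentially the same approach as the paper: prove (ii) via Lemma~\ref{igi} (using that $f^t(\mathbf x)=\mu_{\centerdot}^t(\mathbf x)$ is obtained by orbit mutations), then deduce (iii) by specializing the identity of Lemma~\ref{cfh}, and finally read off (i) from (iii). The paper's write-up is terser but the logical structure is identical.
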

\begin{proof}  Let $\mathbf x_t = f^t(\mathbf x)  =   (x_{1,t},\dots, x_{n,t})$.
	By Lemma \ref{igi},  $x_{i,t}(\mathbf a) = x_{i,t}(g.\mathbf a) = x_{g(i),t}(\mathbf a)$ for any $i$ and $g$, hence $f^t(\mathbf a)$  is  $\Gamma$-invariant.   
	
For (3), it is enough to prove $f^t(\mathbf a)^{\Gamma}=(f^t(\mathbf x))^\Gamma|_{\mathbf x^\Gamma=\mathbf a^\Gamma}$ by Lemma \ref{cfh}.
Let  \[f^t(\mathbf a)^{\Gamma} = (p_{\Gamma i}, \Gamma i\in Q_0^{\Gamma}), \quad \text {and}\,\,\,\, (f^t(\mathbf x))^\Gamma|_{\mathbf x^\Gamma=\mathbf a^\Gamma}= (q_{\Gamma i}, \Gamma i\in Q_0^{\Gamma}).\] 

By definition,  for each $\Gamma i$, then  $p_{\Gamma i} = x_{i,t}|_{\mathbf x=\mathbf a}$ and  $q_{\Gamma i} = \pi(x_{i,t})|_{\mathbf x^\Gamma=\mathbf a^\Gamma}$.
	By Lemma \ref{igi}, we have that 
\[      \pi(x_{i,t})|_{\mathbf x^\Gamma=\mathbf a^\Gamma} = x_{i,t}|_{\mathbf x=\mathbf a}. \]

Thus (3) holds, and (1) follows from (3). 	 
\end{proof}

Let $P =  P_1\sqcup \dots\sqcup P_r$ be a partition of  $Q_0=\{1,\dots,n\}$, and let 
$S\subset \mathbb C^n$ be a nonempty set such that for any  $\mathbf a=(a_1,\dots,a_n)\in S$,  $a_j=a_k$ whenever $j,k\in P_i$ for some $i$. 

Take $j_i\in P_i$ for each $1\leq i\leq r$, let $p: \mathbb C^n\rightarrow \mathbb C^r$ be the projection  such that \[p(x_1,\dots,x_n) :=
(x_{j_1},\dots,x_{j_r}).\]

Define $p(S) := \{p(\mathbf a), \mathbf a\in S\}\in \mathbb C^r$, and
 take a block (lexicographic) order  $>$ on monomials of  $\mathbb C[x_1,\dots,x_n]$  such that the smaller variables are $x_{j_1},\dots,x_{j_r}$.  We also  view polynomials in  $\mathbb C[x_{j_1},\dots, x_{j_r}]$  as polynomials in  $\mathbb C[x_1,\dots,x_n]$.
   Then we have the following lemma.
 
 \begin{lemma}\label{spss}
With the notations above. The  ideals $\I(S)$ and  $\I(p(S))$ are related by the following equations:
\begin{equation} \label{pss} \I(p(S)) = \I(S)\cap C[x_{j_1},\dots, x_{j_r}],\end{equation}
\begin{equation}\label{sps}  \I(S)  = \langle \I(p(S)), \;\,  x_k- x_{j_i} ,\; k\in P_i,\; k\neq j_i;\;\,1\leq i\leq r \rangle.
\end{equation}

Furthermore, the varieties $\overline S$ and $\overline{p(S)}$  are isomorphic, and
$\overline{S}$ is irreducible if and only if   $\overline{p(S)}$ is irreducible.

Fix a block order $>$ such that  the smaller variables are $x_{j_1},\dots,x_{j_r}$. If  $G$ is a Gr\"obner basis of $\I(S)$ with respect to the block order, then $G\cap \mathbb C[x_{j_1},\dots,x_{j_r}]$ is a Gr\"obner basis of $\I(p(S))$.  Conversely, if $\overline{G}$ is a Gr\"obner basis of $\I(p(S))$, then $G$ is a Gr\"obner basis of $\I(S)$ with respect to the block order, where $G$ is given by 
\begin{equation}\label{GG} G= \overline{G}\cup \{     x_k- x_{j_i} ,\; k\in P_i,\; k\neq j_i;\;\,1\leq i\leq r       \}.\end{equation}
\end{lemma}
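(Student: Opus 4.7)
The plan is to establish the four assertions in order, each building on the previous, since (\ref{pss}) feeds (\ref{sps}), which feeds the isomorphism and the Gr\"obner-basis statements.

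First I would prove (\ref{pss}) by a straightforward double inclusion. For $\subseteq$, any $f\in \I(p(S))$, viewed inside $\mathbb{C}[x_1,\ldots,x_n]$, satisfies $f(\mathbf a)=f(a_{j_1},\ldots,a_{j_r})=0$ for every $\mathbf a\in S$ because $p(\mathbf a)\in p(S)$, so $f\in\I(S)\cap \mathbb{C}[x_{j_1},\ldots,x_{j_r}]$. For $\supseteq$, if $f\in\I(S)\cap\mathbb{C}[x_{j_1},\ldots,x_{j_r}]$ and $\mathbf b=(a_{j_1},\ldots,a_{j_r})=p(\mathbf a)\in p(S)$ with lift $\mathbf a\in S$, then $f(\mathbf b)=f(\mathbf a)=0$.

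For (\ref{sps}), write $J$ for the right-hand side. The inclusion $J\subseteq \I(S)$ is immediate: generators in $\I(p(S))$ lie in $\I(S)$ by (\ref{pss}), and each binomial $x_k-x_{j_i}$ vanishes on $S$ by the standing hypothesis. For $\I(S)\subseteq J$, given $f\in \I(S)$, substitute $x_k\mapsto x_{j_i}$ for every $k\in P_i\setminus\{j_i\}$ and every $i$; the resulting polynomial $\bar f\in\mathbb{C}[x_{j_1},\ldots,x_{j_r}]$ differs from $f$ by a $\mathbb{C}[x_1,\ldots,x_n]$-linear combination of the binomials $x_k-x_{j_i}$, and $\bar f$ vanishes on $p(S)$ (evaluate on a lift $\mathbf a\in S$), so $\bar f\in\I(p(S))$ by (\ref{pss}), whence $f\in J$. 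For the geometric part, define $\iota:\mathbb{C}^r\to\mathbb{C}^n$ by $\iota(\mathbf y)_k=y_i$ whenever $k\in P_i$. Then $p\circ\iota=\mathrm{id}_{\mathbb{C}^r}$ and $\iota\circ p$ is the identity on $S$, so $p$ and $\iota$ restrict to mutually inverse morphisms between $\overline{S}$ and $\overline{p(S)}$; irreducibility is transferred by this isomorphism.

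The first Gr\"obner-basis assertion is then immediate: by construction the block order places $x_{j_1},\ldots,x_{j_r}$ below the remaining variables, so eliminating the upper block by Lemma~\ref{eligro} gives that $G\cap \mathbb{C}[x_{j_1},\ldots,x_{j_r}]$ is a Gr\"obner basis of $\I(S)\cap \mathbb{C}[x_{j_1},\ldots,x_{j_r}]$, which equals $\I(p(S))$ by (\ref{pss}). For the converse I would apply Buchberger's criterion to the set $G$ defined in (\ref{GG}); by (\ref{sps}), $G$ already generates $\I(S)$, so only the $S$-polynomial test remains. In the block order, $\Lm(x_k-x_{j_i})=x_k$, which is coprime to the leading monomial of every element of $\overline G$ (these lie in $\mathbb{C}[x_{j_1},\ldots,x_{j_r}]$) and to $x_{k'}$ for $k'\neq k$; by the standard ``coprime leading monomials'' shortcut the corresponding $S$-polynomials reduce to zero modulo $G$. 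For pairs inside $\overline G$, the $S$-polynomial lies in $\mathbb{C}[x_{j_1},\ldots,x_{j_r}]$ and reduces to zero using only divisors from $\overline G$, since $\overline G$ is a Gr\"obner basis of $\I(p(S))$ in the induced order.

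I expect the main technical nuisance to be the last step: correctly verifying that the division algorithm of Lemma~\ref{divisionalgo} carried out in the larger ring $\mathbb{C}[x_1,\ldots,x_n]$ with respect to the block order reproduces, on polynomials supported in $\mathbb{C}[x_{j_1},\ldots,x_{j_r}]$, the division in the smaller ring---together with a careful bookkeeping of the coprime-leading-term reduction. Once this is set up, the proof of both Gr\"obner-basis claims collapses into standard Buchberger arithmetic.
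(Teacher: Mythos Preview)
Your proof is correct and matches the paper's for equations~(\ref{pss}) and~(\ref{sps}), the isomorphism $\overline{S}\cong\overline{p(S)}$, and the first Gr\"obner-basis claim via Lemma~\ref{eligro}. The one genuine divergence is in the converse Gr\"obner-basis statement. You argue via Buchberger's criterion, exploiting that the leading monomials $x_k$ of the added binomials are pairwise coprime and coprime to every leading monomial from $\overline{G}$, so all cross $S$-polynomials reduce to zero automatically; the $S$-polynomials internal to $\overline{G}$ reduce because $\overline{G}$ is already a Gr\"obner basis in the subring. The paper instead verifies the definition of a Gr\"obner basis directly: given any nonzero $f\in\I(S)$, it applies the Division Algorithm (Lemma~\ref{divisionalgo}) to write $f=r+\sum c_{kj_i}(x_k-x_{j_i})$ with $r\in\mathbb C[x_{j_1},\dots,x_{j_r}]$, and then observes that $\Lm(f)$ is either $\Lm(r)$ (hence divisible by some $\Lm(g)$, $g\in\overline G$) or divisible by some $x_k$. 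Your route is perfectly valid but relies on the ``coprime leading monomials'' shortcut, which is standard yet not among the preliminaries the paper records; the paper's route uses only Lemma~\ref{divisionalgo} and is slightly more self-contained, while yours makes the combinatorics of why $G$ works more transparent.
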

\begin{proof}
The equation  (\ref{pss}) is obvious.  For the equation (\ref{sps}),  it is clear that 
\[    J=  \langle  \I(p(S)), \;\,   x_k-x_{j_i} ,\; k\in P_i,\; k\neq j_i;\;\,1\leq i\leq r \rangle \subset  \I(S).     \]

For the converse, let $f\in \I(S)$ and $>$ denote the block order such that   the smaller variables are $x_{j_1},\dots,x_{j_r}$. By Lemma \ref{divisionalgo},  $f$ can be written  as
\begin{equation}\label{divisionf}  f =r+ \sum\limits_{ k\in P_i,\,  k\neq j_i; \,1\leq i\leq r  }     c_{kj_i}(x_k-x_{j_i}),    \end{equation}
with $c_{kj_i},r \in \mathbb C[x_1,\dots,x_n]$ and  either  $r=0$ or  each monomial of $r$ is not divisible by  $x_k$ for  $k\notin \{j_1,\dots,j_r\}$.  If $r=0$, then $f\in J$. Otherwise, it is easy to see that
\[r \in \I(S)\cap C[x_{j_1},\dots, x_{j_r}] = \I(p(S)).\]
Therefore $f\in J$ and $I\subset J$.  

Let $\iota: \mathbb C^r\rightarrow\mathbb C^n$ be the inclusion map such that $b_k= x_{j_i}$ for each $k\in P_i$ and $1\leq i\leq r$, here $(b_1,\dots,b_n)=\iota(x_{j_1},\dots,x_{j_r})$.
It follows from (\ref{pss}) and (\ref{sps}) that $\iota$ and $p$ restrict to inverse isomorphisms on $\overline S$ and $\overline{p(S)}$. Consequently, $\overline{S}$ is irreducible if and only if   $\overline{p(S)}$ is irreducible.

The first statement on Gr\"obner bases follows from the elimination on Gr\"obner bases, see Lemma \ref{eligro}. 
It suffices to show that if $\overline{G}$ is a Gr\"obner basis of $\I(p(S))$, then $G$ is a Gr\"obner basis of $\I(S)$ with $G$ given by (\ref{GG}).  We have shown $\I(S)$ is generated by $G$.  

Let $0\neq f\in \I(S)$ and write $f$ as in (\ref{divisionf}). By Lemma \ref{divisionalgo},
the leading monomial  $\Lm(f)$ of $f$  is either $\Lm(r)$ or is divisible by $x_k$ for some 
$k\notin \{j_1,\dots,j_r\}$. Hence $\Lm(f)$ is divisible by $\Lm(g)$ for some $g\in G$ and thus 
$G$ is a Gr\"obner basis of $\I(S)$ with respect to a given block order.
  \end{proof}

By Lemma \ref{com} and Lemma \ref{spss}, we have the following result.

\begin{theorem}\label{folding}
Suppose that the quiver $Q$ is globally foldable with respect to $\Gamma$ and $(f,h)$  is a compatible  pair of cluster automorphisms of $\mathcal A$ and $\mathcal A^\Gamma$.
Let $\mathbf a\in (\mathbb C^*)^n$ be a $\Gamma$-invariant  general  point for $(Q,f)$. Then
the irreducible decomposition of $X(Q,f,\mathbf a)$ is given by
\[X(Q,f,\mathbf a) = \bigcup_{i=0}^{m-1}X(Q,f^m,f^i(\mathbf a))\]   if and only if the irreducible decomposition of  $X(Q^\Gamma,h,\mathbf a^\Gamma)$ is given by 
\[X(Q^\Gamma,h,\mathbf a^\Gamma) = \bigcup_{i=0}^{m-1}X(Q^\Gamma,h^m,h^i(\mathbf a^\Gamma)).\] 

 In particular, they have the same numbers of irreducible components. 
 Furthermore,  the ideals   $I(X(Q,f, \mathbf a))$ (or  $I(X(Q,f^m, f^i(\mathbf a)))$)  and   $I(X(Q^\Gamma,h,\mathbf a^\Gamma))$  (or $I(X(Q^\Gamma,h^m,h^i(\mathbf a^\Gamma)))$ )  and their  Gr\"obner bases (with respect to a given block order)  are related as in Lemma \ref{spss}.  Moreover $X(Q,f^m, f^i(\mathbf a))$ is smooth if and only if 
 $X(Q^\Gamma,h^m,h^i(\mathbf a^\Gamma))$ is smooth.
\end{theorem}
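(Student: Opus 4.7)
The plan is to reduce the theorem to a direct application of Lemma \ref{spss} combined with Lemma \ref{com}. The key observation is that the set $S = \{f^t(\mathbf{a}) : t \geq 0\}$ consists entirely of $\Gamma$-invariant points by Lemma \ref{com}(ii), so the partition of $Q_0$ into $\Gamma$-orbits fits the hypothesis of Lemma \ref{spss} (with chosen representatives being the $\Gamma j \in Q_0^\Gamma$). Under the projection $p: \mathbb{C}^n \to \mathbb{C}^{|Q_0^\Gamma|}$ forgetting redundant coordinates, Lemma \ref{com}(iii) gives exactly $p(S) = \{h^t(\mathbf{a}^\Gamma) : t \geq 0\}$, whence taking Zariski closures yields an isomorphism of varieties $X(Q,f,\mathbf{a}) \cong X(Q^\Gamma,h,\mathbf{a}^\Gamma)$ via the map $p$ (with inverse $\iota$ as in the proof of Lemma \ref{spss}).

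Next, to match irreducible decompositions, I would verify that $(f^m, h^m)$ is itself a compatible pair of cluster automorphisms: iterating the fact that $f(\mathbf{x})$ is obtained by a sequence of orbit mutations and using that $f$ is a cluster automorphism (so orbit mutations from $f^{k-1}(\mathbf{x})$ to $f^k(\mathbf{x})$ are obtained by transporting those from $\mathbf{x}$ to $f(\mathbf{x})$), one obtains that $f^m(\mathbf{x})$ is reached from $\mathbf{x}$ by orbit mutations, while Lemma \ref{cfh} gives $(f^m(\mathbf{x}))^\Gamma = h^m(\mathbf{x}^\Gamma)$. Since $f^i(\mathbf{a})$ is $\Gamma$-invariant and still a general specialization for $(Q, f^m)$, Lemma \ref{com} applies and, repeating the argument of the previous paragraph with $f^m, h^m, f^i(\mathbf{a})$ in place of $f, h, \mathbf{a}$, I obtain an isomorphism
\[
X(Q, f^m, f^i(\mathbf{a})) \;\cong\; X(Q^\Gamma, h^m, h^i(\mathbf{a}^\Gamma))
\]
realized as the restriction of $p$. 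In particular, the right-hand side is irreducible iff the left-hand side is, and the union decomposition claimed on one side is valid iff it is valid on the other, proving the \textbf{iff} statement and that both varieties have the same number of irreducible components.

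The remaining claims are immediate consequences. For the ideals, since the set $S_{m,i} = \{f^{i+tm}(\mathbf{a}) : t\geq 0\}$ projects to $p(S_{m,i}) = \{h^{i+tm}(\mathbf{a}^\Gamma) : t\geq 0\}$, Lemma \ref{spss} gives both the elimination relation $\I(X(Q^\Gamma,h^m,h^i(\mathbf{a}^\Gamma))) = \I(X(Q,f^m,f^i(\mathbf{a}))) \cap \mathbb{C}[x_{\Gamma j}, \Gamma j\in Q_0^\Gamma]$ and the reverse reconstruction of $\I(X(Q,f^m,f^i(\mathbf{a})))$ by adjoining the linear equations $x_k - x_{j_i}$ identifying coordinates in the same $\Gamma$-orbit. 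The Gröbner basis correspondence (for any block order making the representatives' variables smaller) is the explicit statement in Lemma \ref{spss}. Finally, smoothness is preserved under isomorphism of affine varieties, so $X(Q,f^m,f^i(\mathbf{a}))$ is smooth iff $X(Q^\Gamma,h^m,h^i(\mathbf{a}^\Gamma))$ is smooth.

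The only nontrivial step is the bookkeeping around the compatibility of the pair $(f^m,h^m)$ and the verification that Lemma \ref{spss}'s projection-inclusion isomorphism respects the particular irreducible components picked out by Lemma \ref{isinvariant}; but both reduce to iterated uses of Lemma \ref{cfh} and Lemma \ref{com}, so no genuinely new input is required beyond what has already been established.
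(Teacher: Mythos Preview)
Your proposal is correct and follows exactly the same approach as the paper, which simply states that the theorem follows from Lemma \ref{com} and Lemma \ref{spss} without further detail. Your write-up is in fact more thorough than the paper's one-line proof, correctly spelling out the bookkeeping (e.g., that $(f^m,h^m)$ is again a compatible pair and that $p$ carries each $S_{m,i}$ to the corresponding set for $h$) that the paper leaves implicit.
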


\begin{example}
	Let $Q$ be the quiver given by 
	\[\begin{tikzcd}[row sep=0.5em]
	2\arrow[dr]&&3\arrow[dl]\\
	&1&\\
	5\arrow[ur]&&\,\,4\arrow[ul]
	\end{tikzcd}\]

We may take $\Gamma = \langle(45)\rangle, \langle(345)\rangle, \langle(24)(35)\rangle$ or  $\langle (2345)\rangle$.  Let $f_c$ and $h_c$ be the corresponding Coxeter automorphisms for $Q$ and $Q^\Gamma$, respectively.  Then $(f_c,h_c)$ is a compatible pair of cluster automorphisms.

Let $\Gamma = \langle(2345)\rangle$, the folded valued quiver  $Q^\Gamma$ is given by 
\[   \begin{tikzcd}    2 \arrow{r}{(4,\,\,1)} & 1      	\end{tikzcd}            \]

The frieze variety $X(Q^\Gamma)=X(Q^\Gamma,h_c,\mathbf 1)$ is the zero locus of the irreducible polynomial  \[x_2^4-5x_1x_2^2+x_1^2+2x_1+1,\]
 and the frieze variety $X(Q)=X(Q,f_c,\mathbf 1)$ is the zero locus of the polynomials 
\begin{equation}\label{gd4}
	x_2^4-5x_1x_2^2+x_1^2+2x_1+1,\;  x_3-x_2,\; x_4-x_2,\; x_5-x_2,
	\end{equation} 
which form a Gr\"obner basis of the prime ideal $\I(X(Q))$ with respect to the lexicographical order $x_5>x_4>x_3>x_2>x_1$.

	\end{example}

\section{Affine quivers with Coxeter automorphisms}\label{chapteraffine}
In this section, we mainly study generalized frieze varieties associated to the Coxeter automorphism (usually denoted by $f_c$) for  affine quivers.  

 The  {\em Euclidean diagrams} of  type $\widetilde{\mathbb{A}}_{p,q}$ ($p,q\geq 1$), $\widetilde{\mathbb{D}}_{n}$ ($n\geq 5$),  $\widetilde{\mathbb{E}}_{6}$,  $\widetilde{\mathbb{E}}_{7}$, and  $\widetilde{\mathbb E}_{8}$  are the underlying  graphs of the following quivers.

The  affine  quiver of  type $\widetilde{\mathbb{A}}_{p,q}$:

\[\begin{tikzcd}[row sep=0.1em] &\;\;2\;\;\arrow[dl] &\;\;\dots\;\;\arrow[l]&\quad q\quad\quad \arrow[l]\\
	1&&&&p+q \arrow[ul]\arrow[dl]\\
	&q+1\arrow[ul]&\;\dots\;\arrow[l] &p+q-1\arrow[l]&
	\end{tikzcd}\]

The  affine  quiver of  type $\widetilde{\mathbb{D}}_{n}$:

\[\begin{tikzcd}[row sep=0.6em]
	1&&&&n\quad\;\arrow[dl]\\&3\arrow[ul]\arrow[dl]&\;\,\dots\;\,\arrow[l]&n-2\arrow[l] &\\
	2&&&& n-1\arrow[ul]
\end{tikzcd}
\]

The  affine  quiver of  type $\widetilde{\mathbb{E}}_{6}$:

\[\begin{tikzcd}
	1&2\arrow[l] &7\arrow[l]\arrow[r]\arrow[d]& 6\arrow[r]&5\\
	&&4\arrow[d]&&\\
	&&3&&
\end{tikzcd}
\]

The  affine  quiver of  type $\widetilde{\mathbb{E}}_{7}$:

\[\begin{tikzcd}
	1&2\arrow[l] &3\arrow[l]&8\arrow[l]\arrow[r]\arrow[d]& 6\arrow[r]&5\arrow[r]&4\\
	&&&7&&&
\end{tikzcd}
\]

The   affine  quiver of  type $\widetilde{\mathbb{E}}_{8}$:

\[\begin{tikzcd}
	1&2\arrow[l] &3\arrow[l] &4\arrow[l] &5\arrow[l] &9\arrow[l]\arrow[r]\arrow[d]& 8\arrow[r]&7\\
	&&&&&6&&
\end{tikzcd}
\]

In this and the following sections, we mainly consider affine quivers with above orientations, and when we say an affine quiver  of type $\Delta$, it is referred to be a quiver of type $\Delta$ with the orientation as  above.  Our main conclusion in this section does not depend on the orientations.

Let $Q$ be an affine quiver with vertex set $Q_0$ and arrow set $Q_1$. The {\em  Euler form} associated to $Q$ is a bilinear form on $\mathbb{Z}^{|Q_0|}$  given by the following formula:
\[\langle u, v\rangle = \sum\limits_{i\in Q_0} u_iv_i - \sum\limits_{\alpha\in Q_1} u_{s(\alpha)}v_{t(\alpha},\]
for $u, v\in \mathbb{Z}^{|Q_0|}$.  The Euler form is not symmetric, we prefer to  use the {\em symmetrized Euler form} defined by \[(u,v) = \langle u, v\rangle  + \langle v, u\rangle . \]

Since $Q$ is affine, the radical of the symmetrized Euler  form is one-dimensional and generated by a positive vector $\delta \in (\mathbb{Z}_{>0})^{|Q_0|}$ such that $\text{min}\{\delta_i,  i\in Q_0\}  =1$.  A vertex $i\in Q_0$ is called an {\em extending vertex}, if $\delta_i=1$.

\begin{theorem}[\cite{keller2011linear, lee2020frieze}] \label{fund}
	Let $Q$ be an affine quiver.  Then there exists  a positive integer $m$ and a Laurent polynomial  $\mathcal{L}\in \mathbb{Q}[x_1^{\pm 1}, \dots, x_n^{\pm 1}]$ such that \[  x_{i,t+2m} -\mathcal{L}\cdot x_{i, t+m} + x_{i,t} =0 \]
	for every extending vertex $i\in Q_0$ and  for all  integers $t\geq 0$.
\end{theorem}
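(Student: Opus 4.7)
The plan is to reformulate the recurrence as an identity on Caldero--Chapoton values and then exploit the periodicity of the regular part of the AR-quiver of an affine quiver. Using the identification $x_{i,t}=X_{\tau^{-t+1}P_i}$ recalled in Section~\ref{preli}, the desired relation for each extending vertex $i$ translates to the existence of an integer $m\ge 1$ and a Laurent polynomial $\mathcal{L}\in\mathbb{Q}[x_1^{\pm 1},\dots,x_n^{\pm 1}]$ satisfying
\[
X_{\tau^{-(t+2m)+1}P_i}+X_{\tau^{-t+1}P_i}=\mathcal{L}\cdot X_{\tau^{-(t+m)+1}P_i}
\]
for every $t\ge 0$.

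First I would recall the Auslander--Reiten structure of $\mathbb{C}Q^{op}$-mod for affine $Q$: the AR-quiver decomposes into the preprojective component, a tubular family of regular modules consisting of tubes of ranks $r_1,\dots,r_s$ (all but finitely many of them homogeneous), and the preinjective component, with $\tau$ acting periodically on each tube of rank $r_j$. Setting $m:=\lcm(r_1,\dots,r_s)$ makes $\tau^m$ fix every regular indecomposable, so that the CC-value of any regular summand is constant along $\tau^m$-orbits. The core step is to produce, for each $t\ge 0$, a pair of non-split exchange triangles
\[
\tau^{-(t+2m)+1}P_i \to B_t \to \tau^{-t+1}P_i \to \Si\tau^{-(t+2m)+1}P_i,
\]
\[
\tau^{-t+1}P_i \to B'_t \to \tau^{-(t+2m)+1}P_i \to \Si\tau^{-t+1}P_i,
\]
whose middle terms $B_t,B'_t$ decompose as one copy of $\tau^{-(t+m)+1}P_i$ together with regular summands that are $\tau^m$-invariant. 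Substituting into the exchange-triangle identity $X_LX_M=X_B+X_{B'}$ from Section~\ref{preli} then yields
\[
X_{\tau^{-(t+2m)+1}P_i}\cdot X_{\tau^{-t+1}P_i}=X_{\tau^{-(t+m)+1}P_i}\cdot \mathcal{L}_0
\]
for a Laurent polynomial $\mathcal{L}_0$ built from CC-values of the $\tau^m$-invariant regular summands, hence independent of $t$. Dividing by $X_{\tau^{-(t+m)+1}P_i}$, which is a unit in the cluster Laurent ring by Theorem~\ref{laurentpositivity}, delivers the desired linear recurrence with $\mathcal{L}$ obtained from $\mathcal{L}_0$ after a mild normalization.

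The principal obstacle is constructing the two exchange triangles above and verifying that their middle terms have the stated form. This is where the extending hypothesis $\delta_i=1$ is essential: it forces the multiplicity of $\tau^{-(t+m)+1}P_i$ in the middle term to equal exactly $1$, preventing higher-degree cluster monomials and ensuring a genuinely linear (rather than polynomial) recurrence. Concretely, I would analyze $\Ext^1_{\mathcal{C}_Q}(\tau^{-t+1}P_i,\tau^{-(t+2m)+1}P_i)$ tube by tube via the almost-split sequences within each tube, then identify the two middle terms by matching dimension vectors against the minimal imaginary root $\delta$. Because $\delta_i=1$, the regular summands so produced are $\tau^m$-periodic and independent of $t$ up to the $\tau^m$-action, yielding the single universal polynomial $\mathcal{L}$ and completing the proof.
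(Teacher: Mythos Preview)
Your proposal contains a genuine gap at its core. The exchange-triangle identity $X_LX_M=X_B+X_{B'}$ is multiplicative on the left and additive on the right, so pairing $L=\tau^{-(t+2m)+1}P_i$ with $M=\tau^{-t+1}P_i$ can only ever give you a relation of the form
\[
x_{i,t+2m}\cdot x_{i,t}=X_{B_t}+X_{B'_t}.
\]
Even granting your description of $B_t,B'_t$, this yields $x_{i,t+2m}\,x_{i,t}=x_{i,t+m}\cdot\mathcal{L}_0$, and ``dividing by $x_{i,t+m}$'' produces $\dfrac{x_{i,t+2m}\,x_{i,t}}{x_{i,t+m}}=\mathcal{L}_0$, which is \emph{not} the additive recurrence $x_{i,t+2m}+x_{i,t}=\mathcal{L}\cdot x_{i,t+m}$ you are after. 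No ``mild normalization'' turns the first identity into the second. There is also a preliminary issue: the formula $X_LX_M=X_B+X_{B'}$ in Section~\ref{preli} requires $\dim\Ext^1(L,M)=1$, and for two preprojective translates of $P_i$ separated by $2m$ steps of $\tau$ in an affine quiver this dimension is typically larger than $1$, so the hypothesis is not available in the first place.

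The paper does not supply its own proof of Theorem~\ref{fund}; it quotes \cite{keller2011linear,lee2020frieze} and records the values of $m$ and $\mathcal{L}$ in Table~\ref{tab1}. The argument from those references fixes your setup by choosing the \emph{other} pair in the exchange: one takes a regular module $M$ (with $X_M=X_\delta$, or the appropriate bracelet in type $\widetilde{\mathbb{A}}$) together with the preprojective object $\tau^{-(t+m)+1}P_i$. The extending condition $\delta_i=1$ is exactly what forces $\dim\Ext^1_{\mathcal C_Q}(M,\tau^{-(t+m)+1}P_i)=1$, so the exchange-triangle formula applies and one obtains
\[
\mathcal{L}\cdot x_{i,t+m}=X_{B}+X_{B'}=x_{i,t+2m}+x_{i,t},
\]
the middle terms $B,B'$ being identified with $\tau^{-(t+2m)+1}P_i$ and $\tau^{-t+1}P_i$ (up to the $\sigma$-twist in type $\widetilde{\mathbb{D}}$ explained after Corollary~\ref{numberofirr}). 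In short, your instinct to use exchange triangles and the $\tau^m$-periodicity of the tubes is right, but the product must place $\mathcal{L}$ (a regular CC-value) and $x_{i,t+m}$ on the left so that the sum on the right is $x_{i,t}+x_{i,t+2m}$.
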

 
	Let $X_{?}$ denote the  Caldero-Chapton map, and $X_{\delta}  :=  X_M$ for some regular simple representation
	$M$ of $Q$ with $\text{dim}M=\delta$.   Dupont  proved $X_{\delta}$ does not depend on the choice of $M$ in \cite{dupont2011generic}, so $X_{\delta}$ is well-defined.
	
	The following table  gives a full list of the integers $m$ and the Laurent polynomials $\mathcal{L}$ for the affine quivers  in Theorem \ref{fund}.
	\begin{table*}[htbp]
		\centering  
		\begin{tabular}{|c|c|c|c|c|c|c|}  
			\hline  
			& & & & &  & \\[-6pt] 
			$Q$ & $\widetilde{\mathbb{A}}_{p,q}$& $\widetilde{\mathbb D}_n$, $n$ is even&  $\widetilde{\mathbb D}_n$, $n$ is odd & $\widetilde{\mathbb E}_6$  & $\widetilde{\mathbb E}_7$ &  $\widetilde{\mathbb E}_8$ \\  
			\hline
			& & & & & & \\[-6pt]  
			$m$ & $\text{lcm}(p,q)$ &  $2(n-3)$ & $n-3$ & $6$ & $12$  &$30$               \\
			\hline
			& & &  & & &\\[-6pt] 
			$\mathcal{L}$& $X_{k(p,q)\delta}$& $X_{\delta}^2-2$ & $X_{\delta}$     &$X_{\delta}$ &  $X_{\delta}$  & $X_{\delta}$            \\  
			\hline
		\end{tabular}
	  \vspace{3mm}
	    \caption{Periods of affine quivers}
	    \label{tab1}
	   \vspace{-1.7em}
	\end{table*}

Here $k(p,q):= (p+q)/\gcd(p,q)$. We call $m$ in Table \ref{tab1} the {\em period} of $Q$.

Theorem \ref{fund}  for $\widetilde{\mathbb{A}}_{p,q}$ was proved in  \cite{lee2020frieze}, see  [Theorem 3.1, \cite{lee2020frieze}].  In \cite{lee2020frieze}, the Laurent polynomial $\mathcal{L}$ is given by the Laurent polynomial associated to the  $k(p,q)$-bracelet of the  equator  in the annulus with
	$p$ marked points on the  inner boundary component and $q$ marked points on the outer boundary component, which equals to   $X_{k(p,q)\delta}$ by Dupont and Thomas, see \cite{dupont2013atomic}. 
	
	The theorem for types $\widetilde{\mathbb{D}}_n,\,   \widetilde{\mathbb{E}}_6, \, \widetilde{\mathbb{E}}_7,$ and $\widetilde{\mathbb{E}}_8$ was proved in  \cite{keller2011linear}, see  [Theorem 6.1,  6.2,  7.1, \cite{lee2020frieze}].

In the rest of this section, we will  prove the following main theorem and  study  .
\begin{theorem}\label{main}
	Let $Q$ be an affine quiver and $\mathbf a$ a general specialization for $Q$ and $f_c$.  Then the generalized frieze variety  $X(Q, \mathbf a) = X(Q, f_c,  \mathbf a)$  is either a finite subset of $\mathbb{C}^n$
	or  a union of finitely many rational curves.   	
\end{theorem}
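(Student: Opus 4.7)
The plan is to cover $X = X(Q, f_c, \mathbf{a})$ by finitely many subsets each parametrized by a single variable, and then appeal to Lemma~\ref{rationalcurve}. With $m$ the period from Table~\ref{tab1}, I would write
\[
X \;=\; \bigcup_{i=0}^{m-1} Y_i, \qquad Y_i \;:=\; \overline{\{f_c^{\,i+jm}(\mathbf{a}) : j \geq 0\}}.
\]
This union may refine the irreducible decomposition of Lemma~\ref{isinvariant}, but that only helps. It then suffices to show each $Y_i$ is either a single point or a rational curve.

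Specializing $\mathbf{x} = \mathbf{a}$ in Theorem~\ref{fund}, for every extending vertex $k$ the sequence $(x_{k,\,i+jm}(\mathbf{a}))_{j \geq 0}$ satisfies the linear recurrence with characteristic polynomial $p(x) = x^2 - c x + 1$, where $c := \mathcal{L}(\mathbf{a}) \in \mathbb{C}$. Lemma~\ref{lr3} then yields, uniformly in $k$, a single parameter $u_j$ --- either $\rho^j$ with $\rho \in \mathbb{C}^* \setminus \{\pm 1\}$ in the case $c \notin \{\pm 2\}$, or $j$ itself in the degenerate cases $c = \pm 2$ --- and a (Laurent) polynomial $f_{k,i}$ so that $x_{k,\,i+jm}(\mathbf{a}) = f_{k,i}(u_j)$ for all $j \geq 0$. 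The recurrence being identical across extending vertices, the parameter $u_j$ is shared.

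The remaining delicate step is to propagate this one-parameter description to the non-extending coordinates. My plan is to use the exchange relation
\[
x_{k,\,t}(\mathbf{a})\, x_{k,\,t+1}(\mathbf{a}) \;=\; \prod_{j \to k} x_{j,\,t}(\mathbf{a}) \cdot \prod_{k \to l} x_{l,\,t+1}(\mathbf{a}) + 1
\]
iterated along the admissible sink sequence defining $\mu_c$: since extending-vertex coordinates are rational in $u_j$ both at time $t = i+jm$ and at time $t = i+jm+1$ (the latter being the same parametrization applied to $Y_{i+1}$, which uses the same $\rho$ because $c$ is unchanged), each non-extending coordinate can be solved from neighboring coordinates, with denominators nonzero by the general-specialization hypothesis. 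Once every coordinate of $f_c^{\,i+jm}(\mathbf{a})$ is rational in a single parameter, Lemma~\ref{rationalcurve} forces $Y_i$ to be either a single point (when the parametrization is constant) or a rational curve, and taking the union over $i$ completes the proof. The principal obstacle is this propagation step: verifying uniformly across the affine types $\widetilde{\mathbb{A}}, \widetilde{\mathbb{D}}, \widetilde{\mathbb{E}}$ that the exchange relations actually solve rationally for every non-extending coordinate in the same single parameter without introducing a second transcendental. I expect this to require either a short case analysis along each quiver's orientation or a cleaner uniform argument via the Caldero-Chapoton map applied to regular modules with dimension vector proportional to $\delta$.
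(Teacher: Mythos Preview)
Your overall architecture --- decomposing $X$ into the pieces $Y_i = X(Q,f_c^m,f_c^i(\mathbf a))$, parametrizing each by a single variable via Theorem~\ref{fund} and Lemma~\ref{lr3}, then invoking Lemma~\ref{rationalcurve} --- is exactly what the paper does. The treatment of extending vertices is identical.

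The divergence is in the propagation step, and here your description underestimates the difficulty. Iterating the mutation exchange relations along the sink sequence works cleanly for $\widetilde{\mathbb A}_{p,q}$, $\widetilde{\mathbb D}_n$, $\widetilde{\mathbb E}_6$, and $\widetilde{\mathbb E}_7$: at each step you apply the exchange relation at a vertex whose value is already known, and it solves for a single unknown neighbor. But in $\widetilde{\mathbb E}_8$ this breaks down at the branching vertex $9$. Once you know $x_{1},\dots,x_{5},x_{9}$, the exchange relation at $9$ gives only the \emph{product} $x_{6,t+1}x_{8,t+1}$, the relation at the leaf $6$ gives only $x_{6,t}x_{6,t+1}$, and so on --- no single relation isolates one unknown. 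One can in fact close the system by combining several relations multiplicatively (for instance, $x_{8,t}x_{8,t+1}$ can be rewritten as a known quantity times $x_{6,t-1}x_{6,t}$, which then determines $x_{7,t+1}$, then $x_{8}$, then $x_{6}$), but this is not ``iteration along the sink sequence'' and requires a genuine case-by-case argument.

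The paper avoids this by a different mechanism: rather than only almost split triangles (single-mutation exchange relations), it uses the \emph{exchange triangles} $\ET(L,M)=(B,B')$ constructed in Keller--Scherotzke. These give identities such as $x_{7,t}=x_{1,t-2}x_{1,t+5}-X_{\tau^{-t+2}T}$ in type $\widetilde{\mathbb E}_8$, where $T$ is a regular simple with $\tau^{-m}T\cong T$. The crucial point is the $\tau$-periodicity of these regular modules: $X_{\tau^{-t+2}T}(\mathbf a)$ depends only on $t\bmod m$, hence is a constant within each $Y_i$ and contributes no new transcendental. Your fallback remark about the Caldero--Chapoton map is in the right spirit, but the relevant modules are not those with dimension vector proportional to $\delta$; they are specific regular simples (and cokernels) whose $\tau$-period divides $m$, and it is this periodicity --- not the dimension vector --- that makes the argument close.
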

\begin{proof} By definition, the generalized frieze variety $X = X(Q, f_c, \mathbf a)$  is the Zariski closure of the set  
	\[\{\mathbf a_t =  (a_{1,t}, \dots, a_{n,t}) := \mathbf x_t(\mathbf a)\}_{t\geq 0}\subset \mathbb C^n,\]
	where $\mathbf x_t =  f_c^t(\mathbf x)$ for $t\geq 0$. 
	
	Let $c = \mathcal L(\mathbf a) $. By Theorem \ref{fund},  for each extending vertex $i\in Q_0$,  $0\leq t\leq m-1$ and all $j\geq 0$, we have that 
	\[    a_{i, t+(j+2)m}   -c\cdot a_{i, t+(j+1)m}  + a_{i,t+ jm}=0.      \]

	Thus for each extending vertex $i\in Q_0$ and any $t\in [0, m-1]$,    the sequence $(a_{i,t+jm})_{j\in \mathbb N}$ satisfies a linear recurrence  with the characteristic polynomial 
	\[p(x) =x^2 -c\cdot x +1.\]
	Note that $c$ does not depend on the choices of $i$ and $t$.
	
	 By Lemma \ref{lr3},  if $c\notin\{2,-2\}$,  there exists a complex number $\rho \notin \{1,0,-1\}$  and a  Laurent polynomial  $r_{i,t}(x) = b_{it,-1}x^{-1} + b_{it,1}x\in \mathbb C[x^{\pm 1}]$
	  such that 
	  \[a_{i,t+jm} = r_{i,t}(\rho^j), \quad j\in \mathbb{N}.\] 
	  
	  If $c=2$,  there is a linear polynomial $r_{i,t}(x)= b_{it,0}  +b_{it,1}x  \in \mathbb C[x]$ such that  \[ a_{i,t+jm} = r_{i,t}(j), \quad j\in \mathbb{N}.\]
	  
	   If $c=-2$, there is a linear polynomial $r_{i,t}(x)=b_{it,0}  +b_{it,1}x  \in \mathbb C[x]$ such that  
	  \[a_{i,t+jm} = \begin{cases} r_{i,t}(j), \quad   & j\in 2\mathbb{N},\\  -r_{i,t}(j), \quad & j\in 2\mathbb{N}+1.\end{cases}\]  
	
	Our aim is to show that  there are rational functions $r_{k,t}(x)\in \mathbb C(x)$ satisfying above conditions for each non-extending vertex $k$.  The main idea to  find these rational functions  is  using   almost split triangles for vertices adjacent to extending vertices\footnote{The relations here are indeed exchange relations obtained by a single mutation at the extending vertices from the perspective of cluster algebras.},  and 
	exchange triangles  for others and applying the Caldero-Chapoton map. 
	 All of  these exchange triangles we need are  obtained   in \cite{keller2011linear}.
	
	For $0\leq t\leq m-1$, let  $X_t$ be  the  Zariski closure of the subset
	\[\{\mathbf a_{t+jm}   =   (a_{1,t+jm},  \dots, a_{n, j+tm}) \in\mathbb C^n |\, j\in \mathbb{N} \}\subset \mathbb C^n.\]

	It is obvious that 
	\[    X =      X_0 \cup  X_1 \cup\dots \cup X_{m-1}.   \]

	By Lemma \ref{buqueding}, we have that 
	\[\dim X = \dim X_0  = \dots =\dim X_{m-1}.\]

	Thus  $X$ is a finite subset if and only if there is a $t_0\in [0, m-1]$ such that $X_{t_0}$ is a finite subset, if and only if $X_t$ is a finite subset for all $t\in [0,m-1]$.
	
	In the rest of the proof, we only prove the theorem when $c\notin \{2,-2\}$. The proof for $c\in \{2,-2\}$ is similar.
	
	(1) {\em Type  $\widetilde{\mathbb{A}}_{p,q}$.}
 Let $Q$ be the affine quiver of type  $\widetilde{\mathbb{A}}_{p,q}$ and $n=p+q$. Since every vertex is an extending vertex, we have that  for $t\in [0, m-1]$, there are Laurent polynomials $r_{1,t}(x), r_{2,t}(x), \dots, r_{n,t}(x)\in \mathbb C[x^{\pm 1}]$  and a complex number $\rho \notin \{1,0,-1\}$ such that  for all $j\in \mathbb{N}$, we have that 
	\[\mathbf a_{t+jm} = (r_{1,t}(\rho^j), r_{2,t}(\rho^j), \dots, r_{n,t}(\rho^j)).\]

	If $\rho$ is a root of unity,  $X_t$ is a finite subset and $\dim X_t=0$ for all $t\in [0,m-1]$. If $\rho$ is not a root of unity,  the set $\{\rho^j, j\in\mathbb N\}$ is an infinite set, and by Lemma  \ref{rationalcurve},   $X_t$  is a rational curve for every $t$.  This finishes the proof for the type  $\widetilde{\mathbb{A}}_{p,q}$.

	(2) {\em Type  $\widetilde{\mathbb{D}}_n$.} Let $Q$ be  the affine quiver of type $\widetilde{\mathbb D}_{n}$, and recall  that the extending vertices are $1, 2, n-1$ and $n$. 
	
	For $i\in \{1,2, n-1,n\}$ and $t\in [0,m-1]$, we have a Laurent polynomial  $r_{i,t}(x)$ and a number $\rho \notin \{1,0,-1\}$
	such that $a_{i,t+jm} = r_{i,t}(\rho^j)$  for  $j\in \mathbb{N}$.  We want to prove that for every  vertex $k\in [3, n-2]$, there is also a Laurent polynomial $r_{k,t}(x)$ such that \[ a_{k,t+jm} = r_{k,t}(\rho^j),  \quad j\in \mathbb{N}.\]
	
	   Let us prove it case by case for $3\leq k\leq n-2$ as follows.

	\begin{enumerate}
		\item[(i)] For $k=n-2$, consider the  almost split triangle: 
		\[   \tau^{-t+2}P_n\longrightarrow \tau^{-t+1}P_{n-2} \longrightarrow \tau^{-t+1}P_n\longrightarrow   \Sigma  \tau^{-t+2}P_n .\]
		Then  $x_{n-2, t} =  x_{n,t} x_{n, t-1} -1$. Define
		  \[r_{n-2,t}(x) :=  r_{n,t}(x)r_{n,t+1}(x) -1 \in \mathbb C[x^{\pm 1                                                                                                                                                                                                                                                                                                                                                                       }].\] 
		  We have  for all $j\in \mathbb{N}$, 
		\[a_{n-2, t+jm} = a_{n,t+jm}a_{n,t-1+jm} - 1 = r_{n,t}(\rho^j) r_{n,t+1}(\rho^j) -1  = r_{n-2,t+jm}(\rho^j).\]

		\item[(ii)]  For $k =3$, we have the following  almost split triangle:
		\[ \tau^{-t+1}P_1\longrightarrow \tau^{-t+1}P_{3} \longrightarrow \tau^{-t}P_1\longrightarrow \Sigma \tau^{-t+1}P_1.\]
	Similarly, define 
	\[r_{3,t}(x) :=  r_{1,t}(x)r_{1,t+1}(x) -1 \in \mathbb C[x^{\pm 1}].\] 
		
		\item[(iii)] For $4\leq k \leq n-3$,  we have exchange triangles of $P_{k+1}$ and $S_k$:
	\[  
\ET(P_{k-1},  S_{k})= \begin{cases} 	(P_4, P_1\oplus P_2),\quad   & k=4; \\
	(P_{k}, P_{k-2}), \quad & 5\leq k\leq n-3.
	\end{cases}                \]
		Thus for $t \in \mathbb N$,  we have 
		 \begin{equation}\label{dninv}
		 	\begin{split}               x_{4,t} &= x_{3,t}X_{\tau^{-t+1}S_{4}}  - x_{1,t}x_{2,t},\\    x_{5, t}  &= x_{4,t}X_{\tau^{-t+1}S_{5}} -x_{3,t},\\    &\vdots  \\ x_{n-3, t} & = x_{n-4,t}X_{\tau^{-t+1}S_{n-3}} -x_{n-5,t}.       \end{split}             \end{equation}
			Since for each $4\leq k\leq n-3$,  $\tau^{-t+1}S_{k}  =  \tau^{-t+1-jm}S_{k}$ for all $j\geq 0$,  we may let 
			\[d_{k,t} :=X_{\tau^{-t+1}S_{k}}(\mathbf a).\]
	Then  the following Laurent polynomials are defined inductively.
			   	\[ \begin{split}          r_{4,t} (x)&:= d_{4,t} r_{3, t}(x)  -    r_{1,t}(x) r_{2,t}(x),\\        r_{5, t}(x)  &:= d_{5,t}r_{4,t}(x) -r_{3,t}(x),\\   &\vdots  \\ r_{n-3, t} (x)& :=d_{n-3,t} r_{n-4,t}(x) -r_{n-5,t}(x).       \end{split}             \]
		For $4\leq k\leq n-3$ and $0\leq t\leq m-1$,  it is easy to see that
		\[    a_{k,t+jm} = r_{k,t}(\rho^j),  \quad j \in \mathbb N.                    \]
	Therefore  these rational functions	are what we need.
		 \end{enumerate} 
	 
		Then for each $t\in[0,m-1]$, the variety $X_t$ is parametrized by Laurent polynomials in $\mathbb C[x^{\pm 1}]$, which are  of the form\footnote{By Lemma \ref{lr}, for each non-extending vertex $k$ and each $t$, the sequence $(a_{k, t+jm})_{j\in \mathbb N}$ satisfies a linear recurrence with  characteristic polynomial $(x-\rho^2)(x-1)(x-\rho^{-2})=x^3-bx^2+bx-1$ with $b=\rho^2+\rho^{-2}+1$. Hence $(a_{k,j})_{j\in \mathbb N}$ satisfies the linear recurrence with characteristic polynomial  $x^{3m}-bx^{2m}+bx^m-1$, this is also obtained in [Theorem 3.8, \cite{lee2020frieze}] by using the geometric model of type $\tilde{\mathbb D}_n$.}
		\[\frac{b_2x^2+b_0}{x}\;\,\text{for extending vertices}; \quad \text{and}\;\, \frac{e_4x^4+e_2x^2+e_0}{x^2}\;\, \text{for non-extending vertices}.\]

		Hence  $X_t$  is either a finite subset or a rational curve. 
	
	(3) {\em Type  $\widetilde{\mathbb{E}}_6$.} Let $Q$ be the affine quiver of type $\widetilde{\mathbb{E}}_{6}$.  The extending vertices are $1,3,5$. 
	
	 For $k\in \{2,4,6\}$, we have the almost split 
	triangle:
	\[ \tau^{-t+1}P_{k-1}  \longrightarrow  \tau^{-t+1}P_k \longrightarrow
	\tau^{-t}P_{k-1} \longrightarrow \Sigma\tau^{-t+1}P_{k-1}  .\]
	
	Then for  $t\in\mathbb{N}$,
	\[x_{k, t}  =  x_{k-1,t}x_{k-1,t+1} -1.  \]

Define  \[  r_{k,t}(x) : = r_{k-1,t}(x)r_{k-1, t+1}(x)   -1             \in \mathbb C[x^{\pm 1}].\]  
	
	Let $k=7$.  We have the  exchange triangles:
	\[     \ET(P_1, \tau^{-1}P_2) = (P_7,\tau^{-2} P_1).             \]

	Then we have that 
	\[x_{7,t} = x_{1,t} x_{2,t+1} - x_{1,t+2}.\]

	Similarly, we define
	 \[ r_{7,t}(x) := r_{1,t}(x)r_{2,t+1}(x) - r_{1,t+2}(x).\]

	(4) {\em Type  $\widetilde{\mathbb{E}}_7$.} Let $Q$ be the affine quiver of type $\widetilde{\mathbb{E}}_{7}$.  	
	The extending vertices are $1, 4$.  
	
	For $k=2$, which is adjacent to the extending vertex  $1$, we have the almost split triangle: 
	\[  \tau^{-t+1} P_1\longrightarrow  \tau^{-t+1} P_2 \longrightarrow \tau^{-t}P_1 \longrightarrow \Sigma  \tau^{-t+1} P_1.            \]

	Thus  $x_{2,t} = x_{1,t}x_{1,t+1} -1$, and we define \[r_{2,t}(x) :=  r_{1,t}(x)r_{1,t+1}(x)-1.\]  
	
	Similarly $x_{5,t} = x_{4,t}x_{4,t+1} -1$, and we define \[r_{5,t}(x) :=  r_{4,t}(x)r_{4,t+1}(x)-1.\]
	
	For  $k =3, 6, 8$, we use the exchange triangles:
	\[ \begin{split} &\ET(P_1,  \tau^{-1}P_2) = (P_3, \tau^{-2} P_1), \\ & \ET(P_4,  \tau^{-1}P_5) = (P_6, \tau^{-2} P_4), \\& \ET(P_1,  \tau^{-1}P_3) = (P_8, \tau^{-2} P_2).  \end{split}  \]

	These triangles allow us to define the desired rational functions
	\[   \begin{split}
	         &r_{3,t}(x) := r_{1,t}(x)r_{2,t+1}(x) -r_{1,t+2}(x),  \\
	         &r_{6,t}(x):=   r_{4,t}(x)r_{5,t+1}(x) - r_{4,t+2}(x),  \\
		&r_{8,t}(x) :=  r_{1,t}(x)r_{3,t+1}(x)-r_{2,t+2}(x).
		\end{split}                                    \]

	 Let $k=7$.  In this case, we have the exchange triangle
	 \[     \ET(P_1,\tau^{-4}P_4)  =    (\tau^{-1}P_7, N),\]
	where $N$ is the cokernel of a nonzero morphism $\tau^{-1}P_1\rightarrow \tau^{-4}P_4$. 
Then  
\begin{equation} \label{e7inv}    x_{7,t}=x_{1,t-1}x_{4,t+3} -        X_{\tau^{-t+1}N}.            \end{equation}
 	Note that  \[ X_{\tau^{-(t+jm)}N}  =  X_{\tau^{-t}N},\quad  \text{for all $t$ and $j$}.\]
	
	Let $u_t:= X_{\tau^{-t+2}N}(\mathbf a)$, we therefore obtain the desired rational function 
	\[     r_{7,t}(x) := r_{1,t-1}(x)r_{4,t+3}(x)-u_t.               \]

	(5) {\em Type  $\widetilde{\mathbb{E}}_8$.}  Let $Q$ be the affine quiver of type $\widetilde{\mathbb{E}}_{8}$.   The extending vertex is $1$.
	 
	Similarly, for $k=2$, we use the almost split triangle
	\[     \tau^{-t+1}P_1\longrightarrow        \tau^{-t+1}P_2\longrightarrow \tau^{-t}P_1\longrightarrow  \Sigma\tau^{-t+1}P_1.               \]
	
	Then define $r_{2,t}(x) := r_{1,t}(x)r_{1,t+1}(x)-1$.
	
	For $k=3,4,5,9$, we use the following exchange triangles:
	\[  \begin{split} &\ET(P_1,  \tau^{-1}P_2) = (P_3, \tau^{-2} P_1), \quad \ET(P_1,  \tau^{-1}P_3) = (P_4, \tau^{-2} P_2), \\ &\ET(P_1,  \tau^{-1}P_4) = (P_5, \tau^{-2} P_3), \quad \ET(P_1,  \tau^{-1}P_5) = (P_9, \tau^{-2} P_4).
\end{split}	\]

Similarly, we define 
\[   \begin{split} &  r_ {3,t}(x) := r_{1,t}(x)r_{2,t+1}(x) -r_{1,t+2}(x),\quad            r_ {4,t}(x) := r_{1,t}(x)r_{3,t+1}(x) -r_{2,t+2}(x),       \\   
	   &       r_ {5,t}(x) := r_{1,t}(x)r_{4,t+1}(x) -r_{3,t+2}(x),\quad            r_ {9,t}(x) := r_{1,t}(x)r_{5,t+1}(x) -r_{4,t+2}(x).   \end{split}                    \]
	
For  $k=6,7, 8$, we  use the  exchange triangles:
	\[  \begin{split} &\ET(P_1,  \tau^{-7}P_1) = (\tau^{-2}P_7,  T), \\& \ET(P_1,  \tau^{-3}P_7) = (\tau^{-1}P_6, \tau^{-8} P_1), \\ &\ET(P_1,  \tau^{-2}P_6) = (\tau^{-1}P_8, \tau^{-4} P_7),
\end{split}	\]
where $T$ is a regular simple module satisfying that $\tau^{-t-jm}T = \tau^{-t}T$ for all  $t$ and $j$. In particular, we have  
\begin{equation}\label{e8inv}
x_{7,t} = x_{1,t-2}x_{1,t +5} - X_{\tau^{-t+2} T}.
\end{equation}

 Let $v_t = X_{\tau^{-t+2} T}(\mathbf a)$,  we  then define 
	\[ \begin{split} r_{7,t}(x) &:= r_{1,t-2}(x)r_{1,t +5}(x) - v_t, \\
		r_{6,t}(x) &:= r_{1,t-1}(x)r_{7,t+2}(x) - r_{1,t+7}(x), \\
		r_{8,t}(x) &:= r_{1,t-1}(x) r_{6,t+1}(x) - r_{7,t+3}(x).
	\end{split}\]
	
	These rational functions $r_{i,t}$'s  satisfy that 
	\[    a_{i,t+jm} =r_{i,t}(\rho^j),\quad \text{for $j\in \mathbb N$}.                  \]
	
	This implies $X_t$ is either a finite subset or  a rational curve.
\end{proof}

\begin{remark}
	One obtains invariant Laurent polynomials of period $m$  in the sense of  \cite{igusa2021frieze} from the relations (\ref{dninv}), (\ref{e7inv}), and  (\ref{e8inv}).	
	\end{remark}

As a direct corollary of Lemma \ref{rationalgenus} and Theorem \ref{main},  we have the following result.
\begin{corollary}
	Let $Q$ be an affine quiver.  If $\dim X(Q,f_c,\mathbf a) =1$, 
	then the genus of each irreducible components of $X(Q,f_c,\mathbf a)$ is zero.
\end{corollary}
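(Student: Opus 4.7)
The plan is to deduce this corollary directly from the main theorem of the section (Theorem \ref{main}) together with the characterization of genus for rational curves (Lemma \ref{rationalgenus}), without any further geometric analysis.

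First, I would invoke Theorem \ref{main} to conclude that $X(Q,f_c,\mathbf a)$ is either a finite subset of $\mathbb C^n$ or a union of finitely many rational curves. Since we are assuming $\dim X(Q,f_c,\mathbf a)=1$, the finite-set alternative is excluded (a finite subset of $\mathbb C^n$ has dimension zero). Therefore $X(Q,f_c,\mathbf a)$ must be expressible as a finite union $C_1\cup\dots\cup C_r$ of rational curves.

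Next, I would observe that this decomposition already realizes $X(Q,f_c,\mathbf a)$ as a union of irreducible one-dimensional closed subsets. By the uniqueness of the irreducible decomposition of an affine variety, the irreducible components of $X(Q,f_c,\mathbf a)$ are precisely the maximal elements among the $C_i$ under inclusion, and in particular each irreducible component is itself one of the rational curves $C_i$ (or equals such a $C_i$). Hence every irreducible component of $X(Q,f_c,\mathbf a)$ is an irreducible rational algebraic curve.

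Finally, I would apply Lemma \ref{rationalgenus}, which states that an irreducible algebraic curve is rational if and only if its genus is zero. Therefore each irreducible component of $X(Q,f_c,\mathbf a)$ has genus zero, completing the proof. There is no substantive obstacle here, since all the hard work — establishing the parametrization of the components by Laurent polynomials in a single variable $\rho^j$ for each affine type — has already been carried out in the proof of Theorem \ref{main}.
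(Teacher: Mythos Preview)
Your proof is correct and follows exactly the paper's approach: the paper simply states this as ``a direct corollary of Lemma \ref{rationalgenus} and Theorem \ref{main}'' without further argument. Your additional remark about the irreducible decomposition matching the rational curves $C_i$ just makes explicit what the paper leaves implicit.
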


\begin{corollary}
Let $Q$ be an affine quiver of type $\Delta$ and $\mathbf a\in (\mathbb C^*)^n$ a general specialization for $(Q,f_c)$.  Let $\mathcal L$ be the Laurent polynomial in Table \ref{tab1} and $c=\mathcal L(\mathbf a)$. Suppose that  $c\neq \pm 2$ and  $p(x) =x^2-cx+1= (x-\rho)(x-\frac{1}{\rho})$. Then $\dim X(Q,f_c,\mathbf a) = 1$ if and only if  $\rho$ is not a root of unity.   
\end{corollary}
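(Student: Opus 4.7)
The plan is to extract the conclusion directly from the explicit parametrization constructed in the proof of Theorem~\ref{main}. Recall that there we obtained, for every vertex $i\in Q_0$ and every $t\in[0,m-1]$, a Laurent polynomial (or rational function) $r_{i,t}(x)$ such that
\[
\mathbf a_{t+jm}=\bigl(r_{1,t}(\rho^j),\dots,r_{n,t}(\rho^j)\bigr),\qquad j\in\mathbb N,
\]
and a decomposition $X(Q,f_c,\mathbf a)=X_0\cup\cdots\cup X_{m-1}$ with $\dim X_t=\dim X(Q,f_c,\mathbf a)$ (Lemma~\ref{buqueding}). The proof then splits into two opposite directions, and all the work has effectively been done already.

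For the forward implication I would argue by contrapositive. If $\rho$ is a root of unity, then the set $\{\rho^j:j\in\mathbb N\}$ is finite, so each $X_t$ is a finite subset of $\mathbb C^n$ and hence $X(Q,f_c,\mathbf a)$ is a finite set of dimension $0$, contradicting $\dim X(Q,f_c,\mathbf a)=1$.

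For the converse, assume $\rho$ is not a root of unity, so $S:=\{\rho^j:j\in\mathbb N\}$ is infinite. By Lemma~\ref{rationalcurve}, $X_t$ is a rational curve (in particular one-dimensional) as soon as at least one of the parametrizing functions $r_{i,t}$ is non-constant. I would verify this by looking at any extending vertex $i$ of $Q$ (such a vertex always exists for an affine quiver). Since $c\neq\pm 2$, Lemma~\ref{lr3}(1) gives the special shape
\[
r_{i,t}(x)=b_{it,-1}x^{-1}+b_{it,1}x,
\]
a Laurent polynomial with no constant term. If $r_{i,t}$ were constant on the infinite set $S$, then as a Laurent polynomial it would be identically constant, forcing $b_{it,-1}=b_{it,1}=0$, hence $r_{i,t}\equiv 0$. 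This would give $a_{i,t+jm}=0$, contradicting the assumption that $\mathbf a$ is a general specialization (so $\mathbf a_{t+jm}\in(\mathbb C^*)^n$). Therefore $r_{i,t}$ is non-constant, Lemma~\ref{rationalcurve} applies, and $\dim X(Q,f_c,\mathbf a)=\dim X_0=1$.

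The only subtle point, which is the ``obstacle'' here, is ensuring that at least one parametrizing function is non-constant; this is precisely where the non-vanishing afforded by the general specialization hypothesis, combined with the fact that the Laurent polynomials attached to extending vertices carry no constant term in the case $c\neq\pm 2$, does the required job uniformly across all affine types.
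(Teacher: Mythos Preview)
Your proposal is correct and follows essentially the same approach as the paper: both use the parametrization from Theorem~\ref{main} together with Lemma~\ref{rationalcurve}, splitting on whether $S=\{\rho^j:j\in\mathbb N\}$ is finite or infinite. The paper's proof simply asserts that the parametrization $\varphi$ ``is obviously nontrivial in this case'' without justification; your argument via the extending vertex (using that $r_{i,t}(x)=b_{it,-1}x^{-1}+b_{it,1}x$ has no constant term, so constancy forces it to vanish identically, contradicting $\mathbf a_{t+jm}\in(\mathbb C^*)^n$) supplies exactly the missing detail behind that word ``obviously''.
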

\begin{proof}
	Let  $X=X(Q,f_c,\mathbf a)$ and  $S= \{\rho^j, j\in \mathbb N\}$.
For  each irreducible component  $Y$ of  $X$, there  is a rational map 
$\varphi: \mathbb C\dashrightarrow \mathbb C^n$ such that $Y= \overline{\varphi(S)}$.
If  $\rho$ is a root  of unity,  then $S$ is finite, and $\dim X=\dim Y= 0$. 
If  $\rho$ is not a root of unity, then $S$ is infinite. Since  $\varphi$ is obviously nontrivial in this case,   by Lemma 	\ref{rationalcurve},  $\dim X=\dim Y=1$.
	\end{proof}

The numbers of irreducible components of frieze varieties are closely related to the   
period $m$ in Table \ref{tab1}.

\begin{corollary}\label{numberofirr}
	Let $Q$ be an affine quiver of type $\Delta$ and $X(Q,\mathbf a)= X(Q,f_c,\mathbf a)$ has the dimension one.
	Then the number of irreducible components of the generalized frieze variety $X(Q,\mathbf a)$ divides the period $m$ (as given in Table \ref{tab1}).
\end{corollary}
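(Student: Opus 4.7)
The plan is to combine the parametric description of the pieces $X_t$ from the proof of Theorem \ref{main} with the cyclic structure of irreducible components coming from Lemma \ref{isinvariant}. Write $X=X(Q,\mathbf a)=Y_0\cup\cdots\cup Y_{s-1}$ for its irreducible decomposition, where $s$ is the number of components and, by Lemma \ref{isinvariant}, $Y_i=X(Q,f_c^s,f_c^i(\mathbf a))$ with $f_c^j(\mathbf a)\in Y_i$ precisely when $j\equiv i\pmod s$. The goal is to show $s\mid m$.

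For $t\in[0,m-1]$, let $X_t$ be the Zariski closure of $\{f_c^{t+jm}(\mathbf a):j\in\mathbb N\}$, exactly as in the proof of Theorem \ref{main}. The key input is that each $X_t$ is \emph{irreducible}: the proof of Theorem \ref{main} produces rational functions $r_{k,t}(x)\in\mathbb C(x)$ (one per vertex $k$) and a fixed $\rho\in\mathbb C^\ast$ such that $f_c^{t+jm}(\mathbf a)=(r_{1,t}(\rho^j),\dots,r_{n,t}(\rho^j))$ for every $j$, so $X_t$ is parametric with one parameter and hence irreducible by Lemma \ref{xx}(1). (Since $\dim X=1$, the relevant $\rho$ is not a root of unity, so the set of points is infinite; but irreducibility of $X_t$ holds in either case.) Clearly $X=X_0\cup\cdots\cup X_{m-1}$.

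Now each $X_t$ is an irreducible closed subset of the finite union of closed sets $Y_0\cup\cdots\cup Y_{s-1}$, so $X_t\subset Y_{i(t)}$ for a unique index $i(t)\in\{0,\dots,s-1\}$. In particular every point $f_c^{t+jm}(\mathbf a)$ lies in $Y_{i(t)}$, and Lemma \ref{isinvariant}(iii) forces
\[
t+jm\equiv i(t)\pmod s\qquad\text{for every }j\in\mathbb N.
\]
Subtracting the relations for $j=0$ and $j=1$ gives $m\equiv 0\pmod s$, i.e.\ $s\mid m$, as required.

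There is no real obstacle in this argument: the irreducibility of $X_t$ is already established inside the proof of Theorem \ref{main} via the explicit one‑parameter parametrization, and the cyclic permutation property of Lemma \ref{isinvariant} does the rest. The only point that deserves a careful sentence in the writeup is the passage from ``$X_t$ is irreducible'' to ``$X_t$ sits inside a single component $Y_{i(t)}$,'' which is the standard fact that an irreducible set contained in a finite union of closed sets lies in one of them.
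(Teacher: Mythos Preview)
Your proof is correct. Both arguments start from the same fact supplied by the proof of Theorem~\ref{main}, namely that each $X_t=\overline{\{f_c^{t+jm}(\mathbf a):j\in\mathbb N\}}$ is irreducible, but they finish differently. The paper works intrinsically with the $X_t$'s: it observes that any inclusion $X_i\subset X_j$ is an equality (both are curves of the same dimension), and then uses Lemma~\ref{buqueding}(2) to show that $X_i=X_{i+k}$ for one index $i$ forces $X_t=X_{t+k}$ for all $t$, so the distinct $X_t$'s partition $[0,m-1]$ into cosets of a subgroup. You instead invoke the pre-packaged cyclic structure of Lemma~\ref{isinvariant}: knowing the components $Y_0,\dots,Y_{s-1}$ are indexed by residues modulo $s$ and that each point $f_c^j(\mathbf a)$ lies only in $Y_{j\bmod s}$, the irreducibility of $X_t$ immediately pins down a single component and the congruences $t\equiv i(t)$, $t+m\equiv i(t)\pmod s$ give $s\mid m$. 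Your route is arguably cleaner since Lemma~\ref{isinvariant} already encodes the cyclic behavior; the paper's route is more self-contained and avoids appealing back to that lemma.
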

\begin{proof}
	By Theorem \ref{main}, the irreducible decomposition of $X(Q,\mathbf a)$ is given by 
	\[X(Q, \mathbf a)  =  X_0(Q,\mathbf a)\cup X_1(Q,\mathbf a) \cup \dots \cup X_{m-1}(Q,\mathbf a),\]
	where $X_t(Q,\mathbf a) = \overline{\{\mathbf x_{t+jm}(\mathbf a), j\in\mathbb N\}}$ for $0\leq t\leq m-1$,  and each variety $X_t(Q, \mathbf a)$ is a rational curve. 
	
	The conclusion follows from the following two observations:
	\begin{enumerate}
		\item[-]\ If $X_i(Q,\mathbf a)\subset X_j(Q,\mathbf a)$, then they are equal. 
		\item[-]\  If $X_i(Q,\mathbf a) = X_{i+k}(Q,\mathbf a)$ for some $i$ and $k$, then $X_t(Q,\mathbf a) = X_{t+k}(Q,\mathbf a)$ for all $t$. Here the subscripts are numbers modulo $m$. 
	\end{enumerate}
	
	The second observation is easy to see from Lemma \ref{buqueding} (2).
\end{proof}

 In particular, Corollary \ref{numberofirr} holds for frieze varieties, i.e., $\mathbf a=\mathbf 1$.  The number of irreducible components for a generalized frieze variety of an affine quiver
 is not necessarily the period $m$ in  Table \ref{tab1} and depends on the orientations of quivers and  specializations of initial seeds. Indeed, we will see there are only two irreducible components for  frieze variety of type $\tilde{\mathbb E}_6$, and  $6$ irreducible components of frieze variety of type $\tilde{\mathbb E}_7$  in the next section.  It seems that the number of irreducible components is closely related to the symmetry of the quiver.

For example,  if the  initial value is given by $\mathbf 1$,  the period $m$ for type $\widetilde{\mathbb D}_n$  will be $n-3$  whenever $n$ is odd or even.  Indeed, let  $\sigma\in S_n$ be given by   $\sigma :=(1,2)(n-1,n)$. Then  for each extending vertex $i\in \{1,2,n-1,n\}$ and $t\in \mathbb N$, we have that
 \[
X_{\delta}X_{i,t+(n-3)} = X_{\sigma(i),t+2(n-3)} + X_{\sigma(i),t}.
 \] 
In this case, $X_{\sigma(i),t}(\mathbf 1)= X_{i, t}(\mathbf 1)$ for all $t\in \mathbb N$, we therefore  have that 
\[       a_{i,t+2(n-3)} -ca_{i,t+n-3}+a_{i,t} =0.                                             \]
The sequence $\{\mathbf a_{i,t} = \mathbf x_{i,t}(\mathbf 1)\}_{t\in \mathbb N}$ satisfies a linear recurrence with characteristic polynomial $p(x) =x^{2(n-3)}-cx^{n-3}+1$ for each extending vertex $i$, where $c=X_{\delta}(\mathbf 1)$.

 Combining Theorem \ref{main} and Lemma \ref{ratioanlimplicit}, we may give an algorithm to compute the defining polynomials for the irreducible components of generalized frieze varieties of affine quivers associated to Coxeter automorphisms. 
\vspace{3mm}

{\bf Step 1}:  Compute the initial  data 
$\{\mathbf x_{t}(\mathbf a),  0\leq t\leq 2m\}$.
\vspace{3.5mm}

{\bf Step 2}: Compute the characteristic  polynomial  $p(x) = x^2-cx+1$ for  extending vertices. One may use $c = \mathcal L(\mathbf a)$ with $\mathcal L$ given in Table \ref{tab1}.
An alternative way in practice to give $c$ is to  solve the equation 
$a_{i,2m} -c \cdot a_{i,m}  + a_{i,0} = 0$  for an extending vertex  $i$.  The roots $\rho$ and $\rho^{-1}$ of  $x^2-c\cdot x +1$ can also be given. 
\vspace{3.5mm}

{\bf Step 3}: For each $t$, we may use the initial data to give the rational  functions (actually Laurent polynomials) in $\mathbb C(x)$:
 \[r_{1,t}(x)  =\frac{f_{1,t}(x)}{ g_{1,t}(x) },\; \dots,\;    r_{n,t}(x)  =\frac{f_{n,t}(x)}{ g_{n,t}(x) } .\]  

To obtain these rational functions, we may firstly compute $r_{i,t}(x)$ for extending vertices by solving some systems of linear equations from the initial data of $i$:
\[     r_{i,t}(\rho^j) =  a_{i, t+jm},  \quad  j=0,1.       \]

Then one may follow the method  from the proof of Theorem \ref{main}  to obtain rational functions for non-extending vertices.   An alternative (but more complicated in practice) way to obtain these rational functions is to solve some systems of linear equations  for all  vertices, but we may need to compute more initial data.
\vspace{3.5mm}

{\bf Step 4}:   Let  $ \widetilde{J}_t =  \langle x_k\,g_{k,t}(x)-f_{k,t}(x), 1\leq k\leq n\rangle  \subset \mathbb C[x, x_1,\dots,x_n].$
Compute the Gr{\"o}bner basis of  $\widetilde J_t$ with the lexicographical order $x>x_1>\dots>x_n$.
Then by Lemma \ref{ratioanlimplicit}, the polynomials in the Gr{\"o}bner basis not involving the indeterminate  $x$  are exactly the Gr\"obner basis of the rational curve $X_t(Q,f_c,\mathbf a)$.
\vspace{3.5mm}

In practice, one may compute Gr{\"o}bner bases   by mathematical software, such as Macaulay $2$ or Maple. 

\begin{remark}
If $\mathbf a=\mathbf 1$, the entries in $\mathbf a_j$ could be very large when $|j|$ is large enough. It is more convenient to  compute $\mathbf a_j$ for $-m\leq j\leq m$ as the initial data to formulate the rational parametrization. In particular, for types $\tilde{\mathbb A}_{p,q}$  and $\tilde{\mathbb D}_n$,  $\mathbf a_j$ $(-1\geq j\geq -m)$ and  $\mathbf a_j$ $(1\leq j\leq m)$  are related via a permutation $\sigma$ by (\ref{mmmm}), where  $\sigma$ is respectively given by
\[\small{\left(\begin{array}{c:ccc:ccc:c}
	1 & 2&\dots &q &q+1&\dots&p+q-1&n\\
	n&q&\dots&2&p+q-1&\dots&q+1&1\\
\end{array}\right)\;\text{for $\tilde{\mathbb A}_{p,q}$}  ,\;\text{and}\;   \left(\begin{array}{cccc}
1 & 2&\dots &n\\
n&n-1&\dots &1\\
\end{array}\right)   \;\text{for $\tilde{\mathbb D}_{n}$}.    }  \]
\end{remark}

\section{Gr{\"o}bner bases for frieze varieties of affine types}\label{groaff}
In this section, we study Gr{\"o}bner bases with a given lexicographical order for frieze varieties $X(Q,f_c,\mathbf 1)$ of affine quivers given in Section \ref{chapteraffine}, and the method works for generalized frieze varieties of affine quivers with arbitrary general  specializations and arbitrary orientations of Euclidean diagrams. 

 We give a uniform method to obtain the Gr\"obner bases for types $\widetilde{\mathbb A}$ and $\widetilde{\mathbb D}$, and we will also give some basic examples to show how the algorithm works. For the exceptional cases, we will see their Gr\"obner bases  look like and the numbers of irreducible components will be given.  In particular, we show that every irreducible component of frieze varieties of affine quivers is smooth.  Before we discuss frieze varieties of affine quivers, let us make some preparations.

Let  $\varphi: \mathbb C^*\rightarrow \mathbb C^n$ be a map given as follows:
\begin{equation}\label{tingting}\begin{split} \varphi: \mathbb  C^*&\longrightarrow\; \mathbb C^n \\
		t\;&\longmapsto \;(r_1(t), r_2(t), \dots, r_{n-2}(t),r_n(t)),\end{split}\end{equation}
where  $r_1(t)$ and  $r_n(t)$ are given by
 \[r_1(t) = \frac{a_1t^2+b_1}{t},\quad \text{and}\,\,\, r_n(t) =\frac{a_nt^2+b_n}{t}, \]
with $a_1,b_1,a_n,b_n\in \mathbb C^*$, and  for $k\in [2,n-1]$,
 $r_k(t)$ is of the following form 
 \[\text{either} \,\,\, r_k(t)= \frac{a_kt^2+b_k}{t},\quad \text{or}\,\,\, r_k(t)=\frac{c_kt^4+d_kt^2+e_k}{t^2},\]
 with $a_k,b_k,c_k,d_k,e_k\in \mathbb C$ . We shall always assume that $r_k(t)$ is in the reduced form, that is, $r_k(t) = a_kt$ if $b_k=0$, or $c_kt^2+d_k$ if $e_k=0$.

\begin{lemma}\label{a1}
	With the notations above.  There exists a nonzero irreducible polynomial $h\in \mathbb C[x,y]$ such that $h(r_1(t),r_n(t))=0$, i.e.,
	\[h(\frac{a_1t^2+b_1}{t},\frac{a_nt^2+b_n}{t})=0.\]
	Moreover,  $\deg(h) \leq 2$ and $\deg(h)=1$ if and only if $\alpha := a_1b_n-a_nb_1=0$.
\end{lemma}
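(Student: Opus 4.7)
The plan is to first establish existence of a nonzero irreducible polynomial $h$ vanishing on the parametrized curve, then produce an explicit quadratic relation between $r_1$ and $r_n$, and finally distinguish the cases $\alpha=0$ and $\alpha\neq 0$.

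For existence, I would invoke Lemma \ref{ratioanlimplicit} (or reprove it directly): the Zariski closure $C$ of the image of $t\mapsto(r_1(t),r_n(t))$ is an irreducible affine variety because $\mathbb C^*$ is irreducible and the map is continuous, and $\dim C\leq 1$ by Lemma \ref{xx}. Since neither $r_1$ nor $r_n$ is constant (all of $a_1,b_1,a_n,b_n$ lie in $\mathbb C^*$), we actually have $\dim C=1$, so $\I(C)$ is a nonzero height-one prime in the UFD $\mathbb C[x,y]$, hence principal; write $\I(C)=(h)$ with $h$ irreducible.

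To bound the degree, I would view $r_1=a_1t+b_1t^{-1}$ and $r_n=a_nt+b_nt^{-1}$ as two linear expressions in the pair $(t,t^{-1})$. When $\alpha\neq 0$, the coefficient matrix with rows $(a_1,b_1)$ and $(a_n,b_n)$ is invertible, so $t$ and $t^{-1}$ can each be expressed as $\mathbb C$-linear combinations of $r_1$ and $r_n$. Imposing $t\cdot t^{-1}=1$ and expanding yields the explicit identity
\[
-a_nb_n\,r_1^2-a_1b_1\,r_n^2+(a_1b_n+a_nb_1)\,r_1r_n=\alpha^2.
\]
Hence the quadratic $h_0(x,y):=-a_nb_nx^2-a_1b_1y^2+(a_1b_n+a_nb_1)xy-\alpha^2$ lies in $\I(C)=(h)$, and its $x^2$-coefficient $-a_nb_n$ is nonzero by hypothesis, so $h_0$ really has total degree $2$; this forces $\deg(h)\leq 2$. (Alternatively, one could note that $1,r_1,r_n,r_1^2,r_nr_1,r_n^2$ are six elements of the $5$-dimensional $\mathbb C$-span of $\{t^{-2},t^{-1},1,t,t^2\}$, hence linearly dependent, which also yields a relation of degree $\leq 2$.)

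Finally, I would identify when $\deg(h)=1$. If $\alpha=0$, then $(a_n,b_n)=\lambda(a_1,b_1)$ for some $\lambda\in\mathbb C^*$, so $r_n=\lambda r_1$ and $h$ is proportional to $y-\lambda x$, giving $\deg(h)=1$. Conversely, any linear relation $Ax+By+C\in\I(C)$ would, after substituting the parametrization and equating coefficients of $t$, $1$, $t^{-1}$, force $Aa_1+Ba_n=0$, $Ab_1+Bb_n=0$, and $C=0$; when $\alpha\neq 0$ this system has only the trivial solution, ruling out a linear element of $\I(C)$ and forcing $\deg(h)=2$ via $h_0$. I do not foresee a serious obstacle: the only care needed is verifying the quadratic identity above and checking that its leading coefficient $-a_nb_n$ is nonzero, which is precisely where the assumption $a_n,b_n\in\mathbb C^*$ is used.
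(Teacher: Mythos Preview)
Your proposal is correct and follows essentially the same strategy as the paper: establish that the image closure is an irreducible plane curve (hence $\I(C)=(h)$ with $h$ irreducible), exhibit an explicit degree-$2$ polynomial vanishing on it, and then characterize when a linear relation exists. The only technical difference is in how the explicit quadratic is produced: the paper computes the resultant $\mathrm{Res}_t(tx-a_1t^2-b_1,\,ty-a_nt^2-b_n)$ as a $4\times 4$ determinant, obtaining $H=a_nb_nx^2-(a_1b_n+a_nb_1)xy+a_1b_1y^2+\alpha^2$, while you invert the $2\times 2$ coefficient matrix to express $t,t^{-1}$ linearly in $r_1,r_n$ and then impose $t\cdot t^{-1}=1$. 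Your $h_0$ is exactly $-H$, so the two computations coincide; your approach is slightly more transparent and avoids the resultant machinery, at the cost of treating the case $\alpha\neq 0$ separately before returning to $\alpha=0$.
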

\begin{proof}
	Since  $\overline{\pi_{1n}(\varphi(\mathbb C^*))}$  is a  rational  plane curve, it is defined  by an irreducible polynomial $h\in \mathbb C[x,y]$.  It is obvious that 
	\[\deg(h) =1 \iff \alpha = a_1b_n-a_nb_1=0.\] In this case,  $h=a_nx-a_1y$.
	The polynomial $h$ can  also  be given as follows.
	
	View  $f = tx-a_1t^2-b_1$ and  $g = ty-a_nt^2-b_n$ as polynomials of $t$ with coefficients in  $\mathbb C[x,y]$. Let $H = \mathrm{Res}_t(f,g)$ be the resultant of $f$ and $g$:
	\begin{equation}\label{resultant}\begin{split} H &= \det{\begin{pmatrix} -a_1&0&-a_n&0\\x&-a_1&y&-a_n\\-b_1&x&-b_n&y\\0&-b_1  &0&-b_n \end{pmatrix}} \\ 
			&=a_nb_nx^2	- (a_1b_n+a_nb_1)xy+a_1b_1y^2 + \alpha^2.
	\end{split}\end{equation}
	
	Then $H = h^k$ for some positive  integer $k$. Thus $\deg(h)\leq 2$.   Moreover, if $\alpha \neq 0$,  $H$ is irreducible and $H=h$.
\end{proof}

In the following  discussion, we  assume that $\alpha := a_1b_n-a_nb_1\neq 0$.

\begin{lemma}\label{a2}
	With the notations above.	For any $k\in [2,n-1]$, there is a polynomial $\tilde L_k\in\mathbb C[x,y]$ of degree $\leq 2$ such that $r_k(t) = \tilde L_k(r_1(t), r_n(t))$. 
\end{lemma}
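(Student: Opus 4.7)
The plan is to eliminate the parameter $t$ from the identities $x = r_1(t)$ and $y = r_n(t)$, and then substitute back into $r_k(t)$. Writing $x = a_1 t + b_1/t$ and $y = a_n t + b_n/t$, I would treat these as a $2\times 2$ linear system in the ``unknowns'' $t$ and $1/t$. Its coefficient matrix has determinant $\alpha = a_1 b_n - a_n b_1$, which is nonzero by the standing hypothesis stated just before the lemma, so solving gives
\[
t = \frac{b_n x - b_1 y}{\alpha}, \qquad \frac{1}{t} = \frac{a_1 y - a_n x}{\alpha},
\]
expressing $t$ and $1/t$ as $\mathbb C$-linear polynomials in $x$ and $y$.

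Next I would handle the two admissible shapes of $r_k$ separately. For the first shape $r_k(t) = a_k t + b_k/t$, direct substitution of the two linear expressions above yields a polynomial $\tilde L_k(x,y)\in\mathbb C[x,y]$ of degree one. For the second shape $r_k(t) = c_k t^2 + d_k + e_k/t^2$, I would first square the linear expressions for $t$ and $1/t$ to obtain $t^2$ and $1/t^2$ as homogeneous quadratic polynomials in $x,y$, and then substitute; this produces $\tilde L_k(x,y)$ of degree at most two, which is the bound claimed.

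There is no serious obstacle here: the argument reduces to inverting a single $2\times 2$ matrix made nonsingular by $\alpha\neq 0$, followed by a one- or two-step substitution. The only items worth a brief check are that the parametrization (\ref{tingting}) really does restrict $r_k$ ($k\in[2,n-1]$) to those two shapes, and that squaring the linear expressions for $t^{\pm 1}$ produces nothing of degree greater than two in $x,y$; both are immediate. The content of the lemma is therefore structural rather than computational: combined with Lemma \ref{a1}, it shows that every intermediate coordinate $x_k$ is determined by an explicit low-degree polynomial in the two extremal coordinates $x_1$ and $x_n$, which is exactly what will make the relations $x_k-\tilde L_k(x_1,x_n)$ together with $h(x_1,x_n)$ plausible generators for the defining ideal of the corresponding component of the frieze variety.
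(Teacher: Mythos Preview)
Your proof is correct and follows essentially the same approach as the paper: both invert the $2\times 2$ linear system in $(t,1/t)$ using $\alpha\neq 0$ and then substitute into the two admissible shapes of $r_k$. The only minor difference is that for the second shape the paper proceeds via an undetermined-coefficients ansatz, observes that the resulting $\tilde L_k$ is only unique up to a multiple of $H$, and records a specific normalized choice (equation~(\ref{2lk}), with no $x_n^2$ term) that is used verbatim in the Gr\"obner-basis computation of Theorem~\ref{Groebnertypea}; your direct substitution yields one particular member of that one-parameter family.
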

\begin{proof}
Suppose that
\[r_k(t) = \frac{a_kt^2+b_k}{t}.\]
Then  it is easy to see that  $\tilde L_k$ is given by 
	\begin{equation} \label{llk}   \tilde L_k=  -\frac{a_nb_k -b_na_k}{\alpha}x  +\frac{a_1b_k-b_1a_k}{\alpha}y.                              \end{equation}

In this case, the polynomial $\tilde L_k$ given in (\ref{llk}) is the unique linear polynomial satisfying the condition. 

Assume that  \[r_k(t) = \frac{c_kt^4+d_kt^2+e_k}{t^2}.\]  

To find $\tilde L_k(x,y)\in \mathbb C[x,y]$ of degree $\leq 2$ such that 
\[ \frac{c_kt^4+d_kt^2+e_k}{t^2}  =  \tilde L_k(\frac{a_1t^2+b_1}{t},\frac{a_nt^2+b_n}{t}),\]
it suffices to assume that $\tilde L_k=  m_1x^2+m_2y^2+ m_3xy- m$. By solving a system of
linear equations,  we have that 
\[\begin{split}
m_1&=\frac{a_n^2e_k-a_nb_n(m+d_k) + b_n^2c_k}{\alpha^2},\\
m_2&=\frac{a_1^2e_k-a_1b_1(m+d_k) +b_1^2c_k}{\alpha^2},\\
m_3&=\frac{-2a_1a_ne_k+(a_1b_n+a_nb_1)(m+d_k)-2b_1b_nc_k}{\alpha^2}, 
\end{split}\]
with an arbitrary complex number $m\in \mathbb C$.  Thus in this case, there is a family of   polynomials $\tilde L_k[m]$ of degree $\leq 2$, which is parametrized by $\mathbb C$, satisfying the condition. Among them, we are only concerned about  the following one:
\begin{equation}\label{2lk}
		\tilde L_k =   -\frac{a_1a_ne_k-b_1b_nc_k}{a_1b_1\alpha}x^2+\frac{a_1^2e_k-b_1^2c_k}{a_1b_1\alpha}xy-\frac{a_1^2e_k-a_1b_1d_k+b_1^2c_k}{a_1b_1}.
\end{equation}
This finishes the proof.
\end{proof}

For  $k\in [2,n-1]$, we define
 \begin{equation}\label{lk}
 	L_k = x_k - \tilde L_k(x_1,x_n)  \in \mathbb C[x_1,\dots,x_n],
 \end{equation}
  here $\tilde L_k$ is given either by (\ref{llk}) when $r_k=(a_kt^2+b_k)/t$ or by (\ref{2lk}) when $r_k=(c_kt^4+d_kt^2+e_k)/t^2$.  We also view $L_k$  defined  in (\ref{lk}) for $2\leq k\leq n-1$ and  $H(x_1,x_n)$ defined in (\ref{resultant})  as polynomials in $\mathbb C[t,x_1,\dots,x_n]$.  Let $L\in \mathbb C[t,x_1,\dots,x_n]$ denote the linear polynomial given by 
\begin{equation}\label{lt}  L= t    -\frac{b_n}{\alpha}x_{1}   +  \frac{b_{1}}{\alpha}x_n.\end{equation}

\begin{theorem}\label{Groebnertypea}
		Let $\varphi$ be given as in (\ref{tingting}) such that  $\alpha = a_{1}b_n- a_nb_{1}\neq 0$, and let  $H$,   $L$ and  $L_k\, (2\leq k\leq n-1)$ the polynomials given  as in (\ref{resultant}), (\ref{lt}) and  (\ref{lk}) respectively. 
		Then  $\{L, L_2,\dots,L_{n-1}, H\}$ is the Gr{\"o}bner basis of the ideal 
		\[\widetilde J =  \langle x_ig_i(t)-f_i(t), 1\leq i\leq n  \rangle \subset \mathbb C[t,x_1,\dots,x_n]\]  
	    with respect to the monomial order $t>x_2>\dots>x_ {n-1}> x_n>x_1$,	where $f_i(t)$ is the numerator of $r_i(t)$ and $g_i(t)$ is the denominator of $r_i(t)$ for $1\leq i\leq n$.

		Hence $\{ L_2,\dots,L_{n-1},H\}$ is the Gr{\"o}bner basis for $\overline{\varphi(\mathbb C^*)}$ with respect to the monomial order $x_2>\dots>x_ {n-1}> x_n>x_1$.  
	\end{theorem}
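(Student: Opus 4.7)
The plan is to reduce the theorem to Buchberger's criterion by checking three things in turn: every proposed generator lies in $\widetilde J$, their leading monomials are pairwise coprime (forcing $S$-polynomials to reduce trivially), and they actually generate all of $\widetilde J$.

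First I would verify the membership $L, L_2, \ldots, L_{n-1}, H \in \widetilde J$. By Lemma \ref{ratioanlimplicit}, $\widetilde J$ is prime and coincides with the ideal of the Zariski closure of the graph of $\varphi$, so it suffices to check that these polynomials vanish along the graph. For $L$, a direct expansion of $(b_n/\alpha) r_1(t) - (b_1/\alpha) r_n(t)$ using $\alpha = a_1 b_n - a_n b_1$ yields $t$, showing $L(t, r_1(t), \ldots, r_n(t)) = 0$. For each $L_k$, vanishing is by construction in Lemma \ref{a2}, and for $H$ it is the content of Lemma \ref{a1}. Next I would identify the leading monomials under the lex order $t > x_2 > \cdots > x_{n-1} > x_n > x_1$:
\[
\Lm(L) = t, \qquad \Lm(L_k) = x_k \ (2 \leq k \leq n-1), \qquad \Lm(H) = x_n^2,
\]
with leading coefficient of $H$ equal to $a_1 b_1 \neq 0$ since $a_1, b_1 \in \mathbb C^*$. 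Indeed $\tilde L_k$ only involves $x_1$ and $x_n$ so $x_k$ dominates, and $x_n^2$ dominates $x_1 x_n$ and $x_1^2$ in the lex order $x_n > x_1$. These leading monomials involve pairwise disjoint variables, so Buchberger's product criterion immediately yields $S(f,g) \to 0$ for any two distinct elements.

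The nontrivial step is the generation claim. Set $I' = \langle L, L_2, \ldots, L_{n-1}, H \rangle \subseteq \widetilde J$. Using $L$ to eliminate $t$ and each $L_k$ to eliminate $x_k$, I would construct a ring isomorphism
\[
\mathbb C[t, x_1, \ldots, x_n]/I' \;\xrightarrow{\;\sim\;}\; \mathbb C[x_1, x_n]/\langle H\rangle.
\]
Because $\alpha \neq 0$, Lemma \ref{a1} ensures $H$ is irreducible, so the right-hand side is a one-dimensional integral domain. Thus $I'$ is a prime ideal of height $n$. Since $\widetilde J$ is itself prime of height $n$ (its variety is the irreducible rational curve given by the Zariski closure of the graph of $\varphi$), the inclusion $I' \subseteq \widetilde J$ must be an equality. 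Buchberger's criterion then upgrades the coprime-leading-monomial observation to the conclusion that $\{L, L_2, \ldots, L_{n-1}, H\}$ is a Gr\"obner basis of $\widetilde J$. The second assertion is then immediate from the Elimination Lemma \ref{eligro}: since $L$ is the unique basis element involving $t$, the remaining $\{L_2, \ldots, L_{n-1}, H\}$ form a Gr\"obner basis of $\widetilde J \cap \mathbb C[x_1, \ldots, x_n]$, which equals $\I(\overline{\varphi(\mathbb C^*)})$ by Lemma \ref{ratioanlimplicit}.

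The main obstacle is the identification $I' = \widetilde J$; the $S$-polynomial reductions are essentially free. This step crucially hinges on the irreducibility of $H$, and hence on the hypothesis $\alpha \neq 0$ --- were $\alpha = 0$, the polynomial $H$ would factor and the dimension/primeness argument would collapse.
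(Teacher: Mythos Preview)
Your proposal is correct and takes a genuinely different route from the paper. The paper proves generation by explicitly rewriting each original generator $x_ig_i(t)-f_i(t)$ as a $\mathbb C[t,x_1,\dots,x_n]$-combination of $L,L_k,H$ (four separate case computations), and then verifies Buchberger's criterion by writing out, in six further case distinctions, explicit decompositions of every $S$-polynomial $S(L,H)$, $S(L,L_k)$, $S(L_k,L_i)$, $S(L_k,H)$ according to the shape of $r_k,r_i$. Your argument bypasses all of this: the pairwise coprimality of the leading monomials $t,\,x_2,\dots,x_{n-1},\,x_n^2$ disposes of the $S$-polynomial reductions via the product criterion in one line, and the generation step is replaced by the observation that $\mathbb C[t,x_1,\dots,x_n]/I'\cong\mathbb C[x_1,x_n]/\langle H\rangle$ is a one-dimensional domain (Lemma~\ref{a1}), forcing the prime $I'$ to coincide with the prime $\widetilde J$ of the same height. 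What the paper's computations buy is complete self-containment and no appeal to the irreducibility of $H$ for the Gr\"obner part; what your approach buys is brevity and conceptual transparency, at the cost of invoking the product criterion and a height comparison. One small point to tighten: your Step~1 tacitly uses that $\widetilde J$ equals the ideal of the graph closure, which Lemma~\ref{ratioanlimplicit} does not state outright; you should note (as the paper implicitly does) that $b_1\neq 0$ forces every point of $\Z(\widetilde J)$ to have $t\neq 0$, so $\Z(\widetilde J)$ is exactly the graph and your vanishing check is legitimate.
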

\begin{proof}
		Let $\tilde G=\langle  L, L_1,\dots,L_{n-1}, H\rangle$. It is easy to see that 
	$\Z(\widetilde J)\subset \Z(\tilde G)$.  Since $\widetilde J$ is prime by  Lemma \ref{ratioanlimplicit},
	therefore $\tilde G\subset \widetilde J$. To see $\widetilde J\subset \tilde G$, it  easily follows from the following facts: 
	\[ \begin{split}
	tx_{1}-a_{1}t^2-b_{1} &= -\frac{b_{1}}{\alpha^2}H+\frac{-\alpha a_{1}t+b_{1}(a_{1}x_n-a_nx_{1})}{\alpha}L,\\	
		tx_n-a_nt^2-b_n &= -\frac{b_n}{\alpha^2}H+\frac{-\alpha a_{n}t+b_n(a_{n-1}x_n-a_nx_{n-1})}{\alpha}L.
 	\end{split}\]	

	If  $r_k= (a_kt^2+b_k)/t$, then 
	\[	tx_k-a_kt^2-b_k = -\frac{b_k}{\alpha^2}H+tL_k+
	\frac{-\alpha a_{k}t+b_k(a_{n-1}x_n-a_nx_{n-1})}{\alpha}L.\]

	If $r_k=(c_kt^4+d_kt^2+e_k)/t^2$, then
	\[\begin{split} t^2x_k-c_kt^4-d_kt^2-e_k=& -\frac{b_1^2c_kt^2+a_1e_kxt+a_1b_1e_k}{a_1b_1\alpha^2}H + t^2 L_k+(\frac{(a_1x_n-a_nx_1)e_k}{\alpha}\\
		&+\frac{(-a_nx_1^2+a_1x_1x_n-a_1\alpha)e_kt + (-\alpha t+b_1x_n-b_nx_1)b_1c_k t^2}{b_1\alpha})L.
		\end{split}\]

	Hence $\tilde J =\langle L, L_2,\dots, L_{n-1}, H \rangle$.

	Let us prove that $G= \{ L, L_2,\dots, L_{n-1}, H \}$ is a Gr{\"o}bner basis for $\widetilde{J}$ with respect to the monomial order $t>x_2>\dots>x_n>x_1$. By Buchberger's Criterion,  it is equivalent to show the remainders of $S$-polynomials  $S(p,q)$ for all $p\neq q\in G$ on division of $(L,L_2,\dots, L_{n-2}, H)$ are zeros. Indeed, we have that
 	have the following decompositions:
	\[S(L,H)=\frac{b_{1}x_n-b_nx_{1}}{\alpha}H-(a_{1}a_nx_n^2-(a_{1}b_n+a_nb_{1})x_1x_n+\alpha^2)L.\]
	
	 For any $i, k\in[2,n-1]$ with $r_i=(a_it^2+b_i)/t,  r_k=(a_kt^2+b_k)/t$ and $k<i$,   and for $s,l\in [1,n]$,	let \[A_{sl}:=	a_sb_l-b_sa_l,\]
	 we have that
		\[\begin{split} 
		S(L,L_k)&= \frac{b_{1}x_n-b_nx_{1}}{\alpha}L_k-\frac{A_{k,1}x_n-A_{k,n}x_{1}}{\alpha}L,\\
		S(L_k,L_i)&=\frac{A_{k,1}x_n-A_{k,n}x_{1}}{\alpha}L_i-\frac{A_{i,1}x_n-A_{i,n}x_{1}}{\alpha}L_k,\\
		S(L_ k,H) &=\frac{A_{k,1}x_n-A_{k,n}x_{1}}{\alpha}H-(a_{1}b_{1}x_n^2-(a_{1}b_n+a_nb_{1})x_{1}x_n+\alpha^2)L_k.
	\end{split}\]

For $i,k\in[2,n-1]$ with $r_i=(c_it^4+d_it^2+e_i)/t^2, r_k =(c_kt^4+d_kt^2+e_k)/t^2$ and $k<i$, let 
\[        C_{ki} := e_kc_i-e_ic_k,          \]
we have that
\[\begin{split}
	S(L,L_k)=&\frac{(b_1b_ nc_k-a_1a_ne_k)x_1^2+(a_1^2e_k-b_1^2c_k)x_1x_n-(b_1^2c_k-a_1b_1d_k+a_1^2e_k)\alpha}{a_1b_1\alpha}L\\
	&+ \frac{b_1x_n-b_nx_1}{\alpha}L_k,
	  \\
	S(L_k,L_i)=&\frac{C_{ki}(tx_1+b_1)}{b_1\alpha}H-c_it^2L_k+c_kt^2L_i\\
	&+\frac{a_ntx_1^2 -a_1txy+a_nb_1x_1-a_1b_1x_n+a_1\alpha t }{b_1\alpha}C_{ki}L,
	\\
S(L_k,H) = &-(a_nb_nx_1^2 -(a_1b_n+a_nb_1)x_1x_n +\alpha^2)L_k\\ &+\frac{(a_1a_ne_k-b_1b_nc_k)x_1^2+(b_1^2c_k-a_1^2e_k)x_1x_n+(b_1^2c_k-a_1b_1d_k+a_1^2e_k)\alpha}{a_1b_1\alpha}H,
\end{split}\]

Finally, if $i,k\in[2,n-1]$ with $r_i=(a_it^2+b_i)/t,  r_k =(c_kt^4+d_kt^2+e_k)/t^2$ and $k<i$ ($i<k$ is similar), we have that
\[ \begin{split} S(L_k,L_i) =& \frac{-A_{1i}b_1c_kt^2 +a_1a_ie_k(tx+b_1) }{a_1b_1\alpha^2}H - a_it^2L_k +c_kt^3L_i \\
&	+ \frac{(A_{1i}x_n+A_{in}x_1)b_1c_kt^2 +((a_nx_1^2-a_1x_1x_n +a_1\alpha )t+b_1a_nx_1-a_1b_1x_n)a_ie_k}{b_1\alpha}L
\end{split} \]

Thus remainders of $S$-polynomials with respect to 
	$(L,L_1,\dots,L_{n-1},H)$ and the   order $t>x_2>\dots>x_n>x_1$
	are zeros.  Therefore $G$ is a Gr\"obner basis of $\tilde J$ with respect to the  order $t>x_2>\dots>x_n>x_1$, and hence $\{L_2,\dots,L_{n-1},H\}$ is a Gr\"obner basis of 
	$J = \I(\overline{\varphi(\mathbb C^*)}) =\tilde J\cap \mathbb C[x_1,\dots,x_n]$.
	\end{proof}

The following result  about smoothness follows easily from  Theorem \ref{Groebnertypea}.

\begin{corollary} \label{smoothad}
		Let $\varphi$ be given as in (\ref{tingting}) such that  $\alpha = a_{1}b_n- a_nb_{1}\neq 0$.  Then the irreducible variety  $\overline{\varphi(\mathbb C^*)}$ is isomorphic to a plane conic and hence it is smooth.
\end{corollary}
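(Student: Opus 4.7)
The plan is to combine the Gröbner basis description in Theorem \ref{Groebnertypea} with a one-line Jacobian check for the plane conic $\Z(H)$. From Theorem \ref{Groebnertypea}, the defining ideal $\I(\overline{\varphi(\mathbb C^*)})$ is generated by the linear polynomials $L_k = x_k - \tilde L_k(x_1, x_n)$ for $2\le k\le n-1$, together with the single quadratic $H(x_1, x_n)$. I would exploit this shape directly by introducing the coordinate projection
\[
\pi\colon \overline{\varphi(\mathbb C^*)}\longrightarrow \Z(H)\subset \mathbb C^2,\quad (x_1,\dots,x_n)\longmapsto (x_1,x_n),
\]
and the morphism
\[
\iota\colon \Z(H)\longrightarrow \mathbb C^n,\quad (x_1,x_n)\longmapsto \bigl(x_1, \tilde L_2(x_1,x_n),\dots, \tilde L_{n-1}(x_1,x_n), x_n\bigr),
\]
and show that they are mutually inverse regular maps between $\overline{\varphi(\mathbb C^*)}$ and $\Z(H)$. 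The fact that $\iota$ lands in $\overline{\varphi(\mathbb C^*)}$ is immediate because both $H$ and the $L_k$ manifestly vanish on its image, while the identities $\pi\circ\iota = \mathrm{id}_{\Z(H)}$ and $\iota\circ\pi = \mathrm{id}_{\overline{\varphi(\mathbb C^*)}}$ can be read off from the shape of the $L_k$. This produces the claimed isomorphism with a plane conic.

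Next, I would verify smoothness of $\Z(H)$ by the Jacobian criterion. Using the factorization
\[
H(x,y)=a_nb_n x^2-(a_1b_n+a_nb_1)xy+a_1b_1 y^2+\alpha^2=(a_n x-a_1 y)(b_n x-b_1 y)+\alpha^2,
\]
which is read straight off (\ref{resultant}), the vanishing of the two partials $\partial H/\partial x$ and $\partial H/\partial y$ is a linear system in $(x,y)$ whose coefficient matrix has determinant $4a_1a_nb_1b_n-(a_1b_n+a_nb_1)^2=-\alpha^2$. Since $\alpha\neq 0$ by hypothesis, the unique common zero of the partials is $(x,y)=(0,0)$; as $H(0,0)=\alpha^2\neq 0$, this point does not lie on $\Z(H)$, and therefore $\Z(H)$ has no singular points.

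The substantive work is already packed into Theorem \ref{Groebnertypea} and Lemma \ref{a1}: the discovery of the $\tilde L_k$ in (\ref{llk}) and (\ref{2lk}), and the verification via Buchberger's criterion that $\{L_2,\dots,L_{n-1},H\}$ is a Gröbner basis. The only remotely delicate step in the corollary itself is checking that $\iota$ really factors through $\overline{\varphi(\mathbb C^*)}$, i.e.\ that every element of the defining ideal vanishes on the image of $\iota$; but that ideal is generated by $H$ and the $L_k$, both of which are built into $\iota$ by construction, so the verification is mechanical. No obstacle of genuine difficulty is expected.
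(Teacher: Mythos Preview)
Your proposal is correct and follows essentially the same approach as the paper: both use the Gr\"obner basis from Theorem \ref{Groebnertypea} to identify $\overline{\varphi(\mathbb C^*)}$ with the plane conic $\Z(H)$ and then note its smoothness. The only difference is that the paper simply declares the isomorphism ``obvious'' and invokes the standard fact that an irreducible plane conic is always smooth, whereas you spell out the inverse maps $\pi,\iota$ and carry out the explicit Jacobian computation showing the singular locus of $H$ is $\{(0,0)\}\not\subset\Z(H)$.
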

\begin{proof}
By Theorem \ref{Groebnertypea}, we  may obtain a Gr\"obner basis  $\{L_2,\dots,L_{n-1}, H\}$  for the prime ideal  $J= \I( \overline{\varphi(\mathbb C^*)})$  such that 
$L_k$ is a polynomial  of the form  $x_k - \tilde L_k(x_1,x_n)$  with $\tilde L_k\in \mathbb C[x,y]$ for $k\in [2,n-1]$ and $H$ is a polynomial   given by 
\[    H  =a_nb_nx_1^2	- (a_1b_n+a_nb_1)x_1x_n+a_1b_1x_n^2 + \alpha^2.      \]

It is obvious that  $\overline{\varphi(\mathbb C^*)}$   is isomorphic to the  irreducible plane curve
$\Z(H(x,y)) \subset \mathbb C^2$. Since $\Z(H(x,y))$  is an irreducible conic which is always smooth, then  $\overline{\varphi(\mathbb C^*)}$ is smooth.
\end{proof}

For the exceptional cases  $\widetilde{\mathbb E}_{n-1}$  ($n=7, 8, 9$), let $m$ be  the period and $\pi_{12}: \mathbb C^n\rightarrow \mathbb C^2$  be the projection mapping $(a_1,\dots, a_n)$ to $(a_1,a_2)$. We will see that  for each $k\geq 0$, $\pi_{12}(\mathbf a_{k+jm})$ lies on a plane curve $\Z(h_0(x,y))\subset \mathbb C^2$ for $j\in \mathbb N$, here $h_0(x,y)$ is an irreducible polynomial of the form 
\[   h_0(x, y) = x^2(ax^2 +by+c) +d(y+1)^2 \]
with  $abcd\neq 0$ and $b^2\neq 4ad$.  Note that $h$ is irreducible, thus $b\neq c$. As we will see later,  for $i \neq 1,2,7$ when $n=7$ and $i\neq 1,2,3$ when $n=8,9$, there is only one  polynomial in  the Gr\"obner basis (with the given monomial order in the following subsections)  involving $x_i$, which  is   of the form 
\[  L_i = a_i x_i +p_i(x_1,x_2),  \quad \text{for}\;\; n=7;\quad  \text{or}\;\;    L_i = a_i x_i +p_i(x_1,x_2,x_3),   \quad \text{for}\;\; n=8,9.  \]

It is easy to see that $\Z(h_0)$, viewed as a plane affine curve in $\mathbb C^2$, has only one singular point $(0,-1)$.  If  $p=(p_1,\dots,p_n)\in X_k$ such that 
$p_1\neq 0$, it is easy to check $p$ is not singular. While $(p_1,p_2)=(0,-1)$, other $p_i$'s are almost determined by $\pi_{12}(p)$, it is also not hard to check $p$ is nonsingular.

\subsection{Type $\tilde{\mathbb  A}_{p,q}$}
In this subsection, we study Gr{\"o}bner bases of  irreducible components of the frieze variety $X(Q)$ of type $\widetilde{\mathbb  A}_{p,q}$.

Let $(a_j)_{j\in \mathbb N} \subset \mathbb Z_{>0}$ satisfy a linear recurrence with the characteristic polynomial $p(x) = x^2 -cx+1$ such that $c\in \mathbb Z\setminus \{\pm 2,0,\pm 1\}$, and let $\rho$ and $\frac{1}{\rho}$  the roots of  $p(x)$. Clearly the nonzero number $\rho$ is not a root of unity.
Let $r(t)$ be the following Laurent  polynomial:
\[r(t) = \frac{\rho(a_0\rho-a_1)}{\rho^2-1}\cdot\frac{1}{t} + \frac{a_1\rho-a_0}{\rho^2-1}\cdot t =\frac{(a_1\rho-a_0)t^2+\rho (a_0\rho -a_1)}{(\rho^2-1)t}.\]

Then we have $a_j = r(\rho^j)$ for any $j\geq 0$.  We also claim that 
$(a_0\rho-a_1)(a_1\rho-a_0)\neq 0$. Indeed, if this happens, we have that $a_j = a_0\cdot(\frac{a_1}{a_0})^j$ for $j\geq 0$, and $\frac{a_1}{a_0}\in \mathbb Z_{>1}$. Since $a_2 -c\cdot a_1 +a_0 = 0$, we have  that \begin{equation} \label{bukeneng} (\frac{a_1}{a_0})^2   +1  = c\cdot\frac{a_1}{a_0}.  \end{equation}
Since $\frac{a_1}{a_0}\in \mathbb Z_{>1}$ and $c\in\mathbb Z$, the equation (\ref{bukeneng}) does not hold, which is a contradiction. Thus for each extending vertex of an affine quiver in Section \ref{chapteraffine}, the corresponding  rational function $r(t)$ is of the form $(at^2+b)/t$ with $ab\neq 0$.

Let $Q$ be the affine quiver of type $\widetilde{\mathbb  A}_{p,q}$.  For any irreducible component of frieze variety $X(Q)$, it has the following parametrizations:
\[
\begin{split} \varphi: \mathbb  C^*&\longrightarrow\; \mathbb C^n \\
		t\;&\longmapsto \;(r_1(t), \dots, r_{n}(t)),\end{split}\]
with $r_i(t) = \frac{a_it^2+b_i}{t}$ and $a_i,b_i\in \mathbb  C^*$ for $1\leq i\leq n$. 

If  for any $i,j$, $a_ib_j-a_jb_i = 0$ holds, then it is clear that  $\overline{\varphi(\mathbb C^*)}$   is isomorphic to  $C$. Otherwise, we may assume that $a_1b_n-a_nb_1 \neq  0$, then
Theorem \ref{Groebnertypea} provides a method to find the Gr\"obner basis of this parametrization, and  Corollary \ref{smoothad} shows this variety is smooth.

\begin{theorem}
	Each irreducible component of frieze variety $X(Q)$ is  isomorphic to  either $\mathbb C$ or a plane conic.  Moreover, if it is isomorphic to a plane conic,	there is a Gr\"obner basis $\{L_k, 3\leq k\leq n; H\}$  of the irreducible component of frieze variety  such that  $L_k$ is given as in (\ref{lk}) and $H$ is given as in (\ref{resultant}). In particular every irreducible component is smooth.	
\end{theorem}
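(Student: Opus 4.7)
The proof assembles ingredients from Theorem~\ref{main}, Theorem~\ref{Groebnertypea}, and Corollary~\ref{smoothad}. The plan has the following four steps.

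First, I will invoke Theorem~\ref{main} together with Table~\ref{tab1}: each irreducible component $X_t$ ($0\le t\le m-1$) of $X(Q)$ is the Zariski closure of the image of a rational parametrization $\varphi_t\colon \mathbb{C}^*\to \mathbb{C}^n$. Since every vertex of $\widetilde{\mathbb{A}}_{p,q}$ is an extending vertex, the construction in the proof of Theorem~\ref{main} yields the uniform Laurent form $r_{i,t}(u) = (a_{i,t} u^2+b_{i,t})/u$ for every coordinate, matching the setup (\ref{tingting}).

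Second, I must verify that $a_{i,t}, b_{i,t}\in\mathbb{C}^*$ for all $i$, so that the parametrization genuinely has the shape required by Theorem~\ref{Groebnertypea}. Each frieze sequence $(x_{i,t+jm}(\mathbf{1}))_{j\ge 0}$ consists of positive integers that grow without bound, so its characteristic polynomial $u^2-cu+1$ has integer coefficient $c$ satisfying $c\notin\{0,\pm 1,\pm 2\}$. The preparatory paragraph of this subsection then applies: under this hypothesis the Laurent polynomial attached to any extending vertex must take the form $(au^2+b)/u$ with $ab\ne 0$, otherwise the forbidden integer identity (\ref{bukeneng}) would hold.

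Third, I will split into two cases according to whether the determinants $a_{i,t}b_{j,t}-a_{j,t}b_{i,t}$ vanish for \emph{every} pair $(i,j)$ or not. If they all vanish, then every $r_{i,t}$ is a $\mathbb{C}$-scalar multiple of one fixed Laurent polynomial $u\mapsto \alpha u+\beta/u$; since the latter surjects onto $\mathbb{C}$, the image $\varphi_t(\mathbb{C}^*)$ lies on a line through the origin in $\mathbb{C}^n$, so $X_t\cong\mathbb{C}$ and is evidently smooth. In the complementary case, after permuting indices so that $a_{1,t}b_{n,t}-a_{n,t}b_{1,t}\ne 0$, Theorem~\ref{Groebnertypea} delivers the Gr\"obner basis $\{L_3,\dots,L_n,H\}$ with $L_k$ given by (\ref{lk}) and $H$ given by (\ref{resultant}), while Corollary~\ref{smoothad} identifies $X_t$ with a smooth plane conic.

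The main obstacle is Step~2, namely establishing that both coefficients $a_{i,t}$ and $b_{i,t}$ are nonzero; this is precisely where the integer structure of the frieze sequence and the strict control $c\notin\{0,\pm 1,\pm 2\}$ enter in an essential way. Once this nonvanishing is in hand, the dichotomy of Step~3 and the machinery of Theorem~\ref{Groebnertypea} and Corollary~\ref{smoothad} close the argument quickly.
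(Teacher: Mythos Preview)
Your proposal is correct and follows essentially the same route as the paper. The paper's argument is precisely the paragraph preceding the theorem: it records that every coordinate of the parametrization is of the form $(a_it^2+b_i)/t$ with $a_i,b_i\in\mathbb C^*$ (using the integer argument culminating in (\ref{bukeneng})), then splits on whether all $2\times 2$ determinants $a_ib_j-a_jb_i$ vanish, invoking Theorem~\ref{Groebnertypea} and Corollary~\ref{smoothad} in the nondegenerate case---exactly your Steps~1--3.

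One small caution on Step~2: ``grows without bound'' by itself does not rule out $c=2$, since for $c=2$ the recurrence gives linear (hence unbounded) growth. The paper does not spell this out either; it simply takes $c\in\mathbb Z\setminus\{0,\pm 1,\pm 2\}$ as a standing hypothesis for the frieze case. If you want to be airtight, note that $c=\mathcal L(\mathbf 1)=X_{k(p,q)\delta}(\mathbf 1)$ is a positive integer strictly greater than $2$ for every $\widetilde{\mathbb A}_{p,q}$, so the degenerate cases $c\in\{0,\pm 1,\pm 2\}$ genuinely do not occur for the frieze variety with specialization $\mathbf 1$. Also, the indexing $\{L_k:3\le k\le n\}$ in the statement versus $\{L_2,\dots,L_{n-1}\}$ in Theorem~\ref{Groebnertypea} is only a relabeling of which two coordinates play the role of $(x_1,x_n)$, which your ``after permuting indices'' handles.
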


\begin{example}
	The quiver $Q$ of type $\widetilde{\mathbb A}_{2,1}$ is given by 
	\[\mathord{\begin{tikzpicture}[scale=1.3,baseline=0]
			\node at (0,0.3) (12) {$2$};
			\node at (-1,-0.3) (11) {$1$};
			\node at (1,-0.3) (13) {$3$,};
			\path[-angle 90]
			(12) edge   (11)
			(13) edge   (12)
			(13) edge   (11);
	\end{tikzpicture}}\]

	Notice that the period $m=\mathrm{lcm}(1,2) = 2$, and the first five points are given by \[(1,1,1),\, (2,3,7), \,(11,26,41),\, (97,153, 362),\,(571, 1351,2131).\]	
	
	Let us give the defining ideal for $X_0$. The linear polynomial $L_1 = x_1+c_2x_2+c_3x_3$ and the  polynomial $H = b_1x_2^2+b_2x_2x_3+b_3x_3^2+1$ are respectively given by  
	\[
	\begin{pmatrix}  1 &1 &1 \\11 &26 &41
	\end{pmatrix} \begin{pmatrix} 1\\c_2\\c_3\end{pmatrix} = 0,\quad\text{and}\;\,
	\begin{pmatrix} 
		1      &     1       &    1\\
		676     &   1066  &      1681\\
		1825201  &   2878981  &   4541161
	\end{pmatrix} \begin{pmatrix} b_1\\b_2\\b_3\end{pmatrix} = \begin{pmatrix} -1\\-1\\-1\end{pmatrix}. 
	\]
	Thus $L_1= x_1-2x_2+x_3$ and $H = 2x_2^2-6x_2x_3+3x_3^2+1$.  By Theorem \ref{Groebnertypea}, $\I(X_0) = \langle L_1, H\rangle$.
	
	We can also find the rational parametrizations for  $X_0$ and $X_1$. Indeed, the characteristic polynomial is  $p(x)=x^2 - c\cdot x+1$ and $c$ satisfies that $571-c\cdot 11 +1 =0$,
	it is clear that $c= 52$ and two roots of $p(x)$ are  given  by  \[\rho_0=26-15\sqrt{3},\quad \text{and}\;\; \rho_1 = 26+15\sqrt{3}.\]

	To obtain the rational functions of the first component $X_0$, it is enough to solve the  following systems of linear equations:
	\[A \cdot \mathbf b_i = \mathbf c_i,\quad 1\leq i\leq 3,\]
	where 
	\[A=\begin{pmatrix} 1&1\\  \rho_0& \rho_1
	\end{pmatrix},
	\quad \mathbf c_1 = \begin{pmatrix} 1\\11\end{pmatrix} , \quad \mathbf c_2= \begin{pmatrix} 1\\26\end{pmatrix}, \quad \mathbf c_3 = \begin{pmatrix} 1\\41\end{pmatrix}.\]
	Thus we have the rational parametrizations:
	\[(\frac{(3-\sqrt{3})t^2 + 3+\sqrt{3}}{6t}, \,\, \frac{t^2+1}{2t},\,\, \frac{(3+\sqrt{3})t^2 + 3-\sqrt{3}}{6t}). \]

	Let $\tilde{J}_0 =  \langle  6tx_1- (3-\sqrt{3})t^2  - 3-\sqrt{3}),\; 2tx_2 -t^2-1,\; 6tx_3-   (3+\sqrt{3})t^2 -3+\sqrt{3}             \rangle.$	
	The Gr{\"o}bner basis of $\tilde{J}_0$ with respect to $t>x_1>x_2>x_3$ is given by 
	\[2 x_2^{2}-6 x_2 x_3 +3 x_3^{2}+1,\quad x_1-2x_2+x_3, \quad \sqrt{3}x_2-\sqrt{3}x_3-x_2+t.\]
	Then  $\I(X_0) = \langle x_1-2x_2+x_3,  2 x_2^{2}-6 x_2 x_3 +3 x_3^{2}+1\rangle$.
	
	Similarly, we obtain rational parametrizations of the second component $X_1$:
	\[(\frac{(2+\sqrt{3})t^2 +2-\sqrt{3}}{2t},\, \, \frac{(9+5\sqrt{3})t^2+9-5\sqrt{3}}{6t},\, \,\frac{(7+4\sqrt{3})t^2+7-4\sqrt{3} }{2t}).\]

Similarly define $\tilde{J}_1:= \langle     2tx_1- (2+\sqrt{3})t^2 -2+\sqrt{3},\; 6tx_2- (9+5\sqrt{3})t^2-9+5\sqrt{3},\;    2tx_3-(7+4\sqrt{3})t^2-7+4\sqrt{3}  \rangle$, and the Gr{\"o}bner basis of $\tilde{J}_1$ with respect to $t>x_1>x_2>x_3$  is given by 
	\[3x_2^2-6x_2x_3+2x_3^2+1,\quad x_1-3x_2+x_3,\quad 7\sqrt{3}x_2-3\sqrt{3}x_3-12x_2+5x_3+t.\]
	Thus $\I(X_1) = \langle 3x_2^2-6x_2x_3+2x_3^2+1, x_1-3x_2+x_3\rangle$.

\end{example}
In the case of type $\widetilde{\mathbb A}_{1,2}$, the defining polynomials for $X_0$ and  $X_1$ were also obtained by 
Lee,   Li,  Mills,  Schiﬄer and Seceleanu in \cite{lee2020frieze} via Macaulay $2$.  Igusa and Schiffler also gave another method to obtain the defining polynomials for type $\widetilde{\mathbb A}_{1,n}$ by  using an invariant Laurent polynomial,  see   \cite{igusa2021frieze} for details.

The following example shows generalized frieze varieties with different general specializations for the same quiver may have different  numbers of  irreducible components. 

\begin{example}
 Let  $Q$ be the quiver of  type $\tilde{\mathbb A}_{2,2}$ given by 
	\[\begin{tikzcd}
		2\arrow[d] &4\arrow[l] \arrow[d]\\
		1&3\arrow[l]	
	\end{tikzcd}	\]
	
	Since $m=\mathrm{lcm}(2,2)=2$, one may expect there are two irreducible components of  the frieze variety.  The initial five points are given by 
	\[(1,1,1,1),\,(2,3,3,10),\,(5,17,17,29),\\(58,99,99,338),\,(169,577,577,985)\dots \]	
\end{example}

Then $\I(X_0) = \langle  x_1-2x_3+x_4,x_2-x_3, 2x_4^2-4x_3x_4+x_3^2+1 \rangle$ and $\I(X_1) = \langle  x_1-4x_3+x_4,x_2-x_3, x_4^2-4x_3x_4+2x_3^2+2\rangle$.

Let $\mathbf a= (1,1,1,2)$. The generalized frieze variety $X(Q,\mathbf a)$ has only one irreducible component and its corresponding ideal is given by $\I(X(Q,\mathbf a)) = \langle x_3^2-3x_3x_4+x_4^2+1,x_2-x_3,x_1-3x_3+x_4\rangle$. 

Let $Q'=\mu_4(Q)$.  It is an unfolding of the Kronecker quiver.  Even though $Q$ and  $Q'$ have the same underlying graph,  the frieze variety  $X(Q', \mathbf 1)$ has only one irreducible component  whose defining ideal is given by  $\I(X(Q',\mathbf 1)) = \langle x_3^2-3x_3x_4+x_4^2+1,x_2-x_3,x_1-x_4\rangle$.

\subsection{Type $\tilde{\mathbb D}_n$}  
Let $Q$ be the  affine  quiver of type $\tilde{\mathbb D}_n$ given in Section \ref{chapteraffine}, and let $m$ the period. Since we consider the 
frieze variety $X(Q)$, the period $m$ is $n-3$ whenever $n$ is odd or even as we have explained in the previous section.  

 Let $i$ be extending, then $(a_{i, jm}: = \mathbf x_{i,jm}(\mathbf a))_{j\in \mathbb N}$ satisfies   the linear  recurrences with characteristic polynomial $p(x)= x^2-cx+1$ and $c>2$. We write $p(x)$ as the following form 
\[p(x) = (x-\rho^{-1}) (x-\rho)\]
 with $\rho\in \mathbb C^*$   not a root of unity.  
 
 If $k$ is not extending, then
for each $0\leq l\leq m-1$, the sequence  $(a_{k, l+jm})_{j\in \mathbb N}$ satisfies a linear recurrence with the characteristic polynomial 
 given by 
\[
(x-\rho^{-2})(x-1)(x-\rho^2).
\]

In this case,  the irreducible component of frieze variety $X(Q)$ is parametrized by the following map:
\[
\begin{split} \varphi: \mathbb  C^*&\longrightarrow\; \mathbb C^n \\
	t\;&\longmapsto \;(r_1(t), \dots, r_{n}(t)),\end{split}\]
with $r_i(t) = \frac{a_it^2+b_i}{t}$ and $a_i,b_i\in \mathbb  C^*$ for $ i\in \{1,2,n-1,n\}$ and 
$r_k(t) = \frac{c_kt^4+d_kt^2+e_k}{t^2}$ for $k\in [3,n-2]$.  We also have that 
$\alpha = a_1b_n-a_nb_1\neq 0$.  Indeed, in this case, $m=n-3$ and $\mathbf a=\mathbf 1$, we  obtain the initial $2m+1 (=2(n-3)+1) $ data for vertices $1$ and $n$ as follows:
\[  \begin{matrix} ((n-3)n+1, n-2)  &\dots&  ((n-3-j)n+1  ,n-2-j)&\dots    &(n+1, 2)\\
(1,1) &\dots &(j+1, jn+1)& \dots &(n-3, (n-4)n+1),\\
(n-2,(n-3)n+1)&\dots&&&
\end{matrix}
\]
where  the first row denotes $\pi_{1n}(\mathbf a_j)$ for $-m \leq j \leq -1$ which are obtained by applying $f^{-j}_c$, the second row denotes $\pi_{1n}(\mathbf a_j)$ for $0\leq j \leq m-1$,  and the third row denotes $\pi_{1n}(\mathbf a_m)$,  here  $\pi_{1n}:\mathbb C^n\rightarrow \mathbb C^2$ is the projection mapping $(a_1,\dots,a_n)$ to $(a_1,a_n)$.  

For each extending vertex  $i$,   $(a_{i,k+jm})_{j\in \mathbb N}$ satisfies the linear recurrence with the following characteristic polynomial 
\[     x^2-  (n^2-2n-1)x+1.               \]

The following theorem follows from Corollary \ref{smoothad}.

\begin{theorem}\label{smoothd}
	There is a Gr\"obner basis $\{L_k, 2\leq k\leq n-1; H\}$  of each irreducible component of frieze variety of type $\widetilde{\mathbb D}_{n}$ such that  $L_k$ is given as in (\ref{lk}) for $k\in[3,n-2]$
	and $H$ is given as in (\ref{resultant}). In particular every irreducible component is isomorphic to a plane conic and  is smooth.	
\end{theorem}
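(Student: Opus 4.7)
The plan is to reduce the theorem directly to Corollary \ref{smoothad} applied to the parametrizations of the irreducible components of $X(Q,f_c,\mathbf 1)$ constructed in the proof of Theorem \ref{main} for type $\widetilde{\mathbb D}_n$. Recall from that proof that for each $t\in [0,m-1]$ we obtained a rational map $\varphi_t:\mathbb C^*\dashrightarrow \mathbb C^n$ with $\overline{\varphi_t(\mathbb C^*)}=X_t$, where the components $r_i(t)$ for the four extending vertices $i\in\{1,2,n-1,n\}$ are Laurent polynomials of the form $(at^2+b)/t$, while the components for non-extending vertices $k\in [3,n-2]$ have the form $(ct^4+dt^2+e)/t^2$. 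This is exactly the shape of parametrization handled by Theorem \ref{Groebnertypea} and Corollary \ref{smoothad}, provided we can verify the nondegeneracy condition $\alpha := a_1 b_n - a_n b_1\neq 0$ for the extending vertices $1$ and $n$.

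First I would show that the coefficients $a_1,b_1,a_n,b_n$ of $r_1$ and $r_n$ are all nonzero. This is precisely the content of the short argument at the start of the subsection on type $\widetilde{\mathbb A}_{p,q}$: since $c=n^2-2n-1\in\mathbb Z\setminus\{\pm2,0,\pm1\}$ for $n\geq 5$ and the initial values $a_{i,0},a_{i,m}$ are positive integers, if one of $a_0\rho-a_1$ or $a_1\rho-a_0$ vanished we would be forced into the impossible equation $(a_1/a_0)^2+1=c(a_1/a_0)$. Hence $r_1(t)=(a_1t^2+b_1)/t$ and $r_n(t)=(a_nt^2+b_n)/t$ with $a_1b_1a_nb_n\neq 0$.

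Next I would verify $\alpha\neq 0$ using the explicit initial data displayed just before the theorem. Writing the linear system
\[
\begin{pmatrix}1 & 1\\ \rho & \rho^{-1}\end{pmatrix}\begin{pmatrix}a_i\\ b_i\end{pmatrix}=\begin{pmatrix}a_{i,0}\\ a_{i,m}\end{pmatrix}
\]
for $i=1$ and $i=n$ (using data such as $\pi_{1n}(\mathbf a_0)=(1,1)$ and $\pi_{1n}(\mathbf a_m)=(n-2,(n-3)n+1)$ for the base component $X_0$, and the shifted initial data for $X_t$), one computes $\alpha$ as a determinant in $a_{1,0}, a_{n,0}, a_{1,m}, a_{n,m}$ and sees directly that $\alpha\neq 0$: indeed the two sequences $(a_{1,jm})$ and $(a_{n,jm})$ cannot be proportional since they start from distinct initial pairs and $X_t$ is not contained in the line $\{a_1=\lambda a_n\}$.

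Once $\alpha\neq 0$ is established for every component $X_t$, Theorem \ref{Groebnertypea} applies verbatim: it yields a Gr\"obner basis $\{L_k : 2\leq k\leq n-1\}\cup\{H\}$ for $\mathcal I(X_t)$ with respect to the lexicographic order $x_2>\cdots>x_{n-1}>x_n>x_1$, where the $L_k$ are as in (\ref{lk}) (linear for extending vertices $k\in\{2,n-1\}$, of degree at most $2$ in $x_1,x_n$ for $k\in[3,n-2]$) and $H$ is the resultant (\ref{resultant}). Corollary \ref{smoothad} then identifies $X_t$ with the plane conic $\mathbb Z(H(x_1,x_n))\subset\mathbb C^2$, and since every irreducible conic in $\mathbb C^2$ is smooth, each irreducible component of $X(Q,f_c,\mathbf 1)$ is smooth. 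The main obstacle is the explicit verification of $\alpha\neq 0$ for every $t\in[0,m-1]$, which is a direct (but orientation-dependent) calculation from the initial data tabulated above, so it reduces to a finite check.
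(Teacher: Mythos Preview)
Your proposal is correct and follows the same route as the paper: the paper's proof consists solely of the sentence ``follows from Corollary \ref{smoothad}'', with the verification of $\alpha\neq 0$ set up (but not carried out) in the preceding paragraph via the explicit table of $\pi_{1n}(\mathbf a_j)$. One small caveat: your phrase ``reduces to a finite check'' is slightly misleading since $m=n-3$ varies with $n$; what is really needed is a uniform computation from the closed-form initial data $(a_{1,t},a_{n,t})=(t+1,tn+1)$ and $(a_{1,t-m},a_{n,t-m})=((n-3-t)n+1,n-2-t)$, which yields $\alpha\neq 0$ for all $t\in[0,n-4]$ and all $n\geq 5$ by a short polynomial calculation.
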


Clearly, the linear polynomials $L_2$ and $L_{n-1}$ in Theorem \ref{smoothd} are given by $L_2 = x_2-x_1$, $L_{n-1} = x_{n-1}-x_n$.

\subsection{Type $\widetilde{\mathbb E}_6$}
In this subsection, we will give the Gr\"obner bases  for  irreducible components of the frieze variety  $X(Q)$ of type $\widetilde{\mathbb E}_6$.  Recall in this case, the period $m=6$. We are going to give the first two irreducible components  $X_0$ and $X_1$ of $X(Q)$, and we will show that $X(Q) = X_0\cup X_1$.  

Suppose that 
$ \mathbf a_j=\mathbf x_{j}(\mathbf 1)$ for $j\in\mathbb N$.
The  image of the projection $\pi:\mathbb C^7\rightarrow \mathbb C^3$ mapping each $\mathbf x\in \mathbb C^n$ to $\pi(\mathbf x) = (x_1,x_2,x_7)$ of   first  thirteen points $\{\mathbf a_j, 0\leq j\leq 12\}$  are given by 
\[
\begin{split}	
	&(1,1,1),\,(2,3,28),\,(2,19,245),\,(10,129,8762),\, (13,883,78574),\\
	&(68,6051,2819698),
	(89,41473,25298441),\,(466,284259,907922780),\\
	&(610,1948339,8146004749),\,\,(3194, 13354113, 292348238602),\\
	&(4181,91530451, 2622988130126),\,\,(21892, 627359043,94135224380258),\\
	&(28657, 4299982849,844594031206225)\dots
\end{split}\]

The sequence $(a_{1,j})_{j\in\mathbb N}$  satisfies a linear recurrence with characteristic polynomial \[  x^{12} - 322x^6+1.\]

It is easy to check that  $a_{1,j+4} - 7a_{1,j+2}+a_{1,j} =0$ holds for $0\leq j\leq 11$. Then by 
[Lemma 3.13, \cite{lee2020frieze}],  the sequence $(a_{1,j})_{j\in\mathbb N}$   satisfies a linear sequence with the characteristic polynomial 
\[   x^4  -  7x^2+1. \]

Thus  for any  $k\geq 0$, the sub-sequence $(a_{1,k+2j})_{j\in \mathbb N}$ satisfies a linear recurrence with characteristic polynomial \[p(x) =x^2-7x+1 = (x-\rho_0)(x-\rho_1),\]
where  \[  \rho_0 = \frac{7-3\sqrt{5}}{2}, \quad \text{and}\;\, \rho_1=\frac{7+3\sqrt{5}}{2}.\]

This implies that there are at most two irreducible components of the frieze variety $X(Q)$.

It is also obvious that for $k\geq 0$, we have 
\[r_{1,k}(t)= r_{3,k}(t)= r_{5,k}(t), \quad r_{2,k}(t)= r_{4,k}(t)= r_{6,k}(t).\]

Thus we only need to find rational functions $r_{1,k}(t), r_{2,k}(t)$ and $r_{7,k}(t)$.
Moreover, to give $r_{2, k}(t)$ and $r_{7,k}(t)$,  it  is enough to compute  $r_{1,k}(t), r_{1,k+1}(t), r_{1,k+2}(t)$ and  use the following relations:
\begin{equation}\label{27}
	\begin{split}  r_{2,k}(t) &= r_{1,k}(t)r_{1,k+1}(t)  -1,\\
		r_{7,k}(t) &=  r_{1,k}(t)r_{2,k+1}(t)-r_{1,k+2}(t) \\
		&=  r_{1,k}(t)(r_{1,k+1}(t)r_{1,k+2}(t)-1)-r_{1,k+2}(t). \end{split}	\end{equation}

Therefore for $k=0,1$, it is enough to compute 
$r_{1,i}(t)$ for $0\leq i\leq 3$ and other rational functions we need are combinations of them.
Since $r_{1,i}(t)$ is of the form $m_{0i}t^{-1} + m_{1i}t$ for $0\leq i\leq 3$,
it suffices to solve the following linear equations:
\begin{equation}\label{27equation} r_{1,i}(\rho^s) = a_{1,  i+2s};\quad s=0,1;\,\, i=0,1,2,3.\end{equation}

More precisely, we shall solve the following linear equations:
\begin{equation}\label{27matrix}
	\begin{pmatrix}   1 &1\\  \rho_0&\rho_1\end{pmatrix} \cdot \begin{pmatrix} m_{00}&m_{01}&m_{02}&m_{03}\\m_{10}&m_{11}&m_{12} &m_{13} \end{pmatrix}= \begin{pmatrix} 1&2&2&10\\ 2& 10&  13&68 \end{pmatrix}.
\end{equation}

Combining (\ref{27}), (\ref{27equation}) and (\ref{27matrix}),
for $k=0$, we have
\[   \begin{split} 
	r_{1,0}(t) &=r_{3,0}(t) =r_{5,0}(t) = \frac{(5-\sqrt{5})t^2+5+\sqrt{5} }{10t},\\
	r_{2,0}(t) & = r_{4,0}(t) =r_{6,0}(t) =\frac{2t^4+t^2+2}{5t^2},\\
	r_{7,0}(t) &=\frac{(20+8\sqrt{5})t^6+(5+\sqrt{5})t^4+(5-\sqrt{5})t^2+20 -8\sqrt{5}}{50t^3}.
\end{split}                   \]
and for $k=1$, we have
\[
\begin{split}
	r_{1,1}(t) &= r_{3,1}(t) = r_{5,1}(t) = \frac{(5+\sqrt{5})t^2+5-\sqrt{5}}{5t},\\
	r_{2,1}(t) &= r_{4,1}(t) = r_{6,1}(t) =\frac{(7+3\sqrt{5})t^4+t^2+7-3\sqrt{5}}{5t^2},\\
	r_{7,1}(t) &=\frac{(340+152\sqrt{5})t^6+(10+4\sqrt{5})t^4+(10-4\sqrt{5})t^2+340-
		152\sqrt{5} }{25t^3}. 
\end{split}
\]

The Gr{\"o}bner basis for $\I(X_0)$ with respect to the order $x_7>\dots>x_2>x_1$ is given by the following polynomials: 
\begin{equation}\label{e60}
	\begin{split}h_0 &= 2x_1^2(2x_1^2-3x_2-1)+(x_2+1)^2,\\
		h_3 &= x_3 - x_1,\quad h_4= x_4-x_2,\quad h_5= x_5-x_1,\quad h_6=x_6-x_2,\\
		h_1&=4x_1^2(3x_1^2 -4x_2-1) +3(x_2+1)+   2x_1x_7,\\
		h_2&=x_1^3(32x_1^2-42x_2-16)+x_1(15x_2+9)+x_7(x_2+1),\\
		h_7&=x_1^4(440x_1^2-576x_2-400)+x_1^2(444x_2+198)-63x_2+2x_7^2-45.
\end{split}\end{equation}
Thus  $\I(X_0) = \langle h_i, 0\leq i\leq 7\rangle$.  

The Gr{\"o}bner basis for $\I(X_1)$ with respect to the order $x_7>\dots>x_2>x_1$  is given by the following  polynomials: 
\begin{equation}\label{e61}
	\begin{split}
		g_0 &=x_1^2(x_1^2-6x_2-2)+4(x_2+1)^2,\\
		g_3 &=x_3-x_1,\quad g_4=x_4-x_2,\quad g_5=x_5-x_1,\quad g_6=x_6-x_2,\\
		g_1&=x_1^2(3x_1^2-16x_2-4)+12(x_2+1)+2x_1x_7,\\
		g_2&=x_1^3(4x_1^2-21x_2-8)+x_1(30x_2+18)+2x_7(x_2+1),\\
		g_7&= x_1^4(55x_1^2-288x_2-200)+x_1^2(888x_2+396) -504x_2+4x_7^2-360.
	\end{split}
\end{equation}
Then $\I(X_1) = \langle g_i, 0\leq i\leq 7\rangle$.  

\begin{theorem}
	Let $Q$ be the affine quiver of type $\tilde{\mathbb E}_6$.  Then the frieze variety $X(Q) = X_0\cup X_1$ is a union of two rational curves.  Furthermore, the Gr\"obner bases for $X_0$ and  $X_1$ with respect to the order $x_7>\dots>x_2>x_1$ are polynomials given in (\ref{e60}) and (\ref{e61}) respectively.  In particular, $X_0$ and $X_1$ are smooth curves.
\end{theorem}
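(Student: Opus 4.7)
\medskip

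\noindent\textbf{Proof plan.} The first step is to show that, although the nominal period in Table 1 is $m=6$, the extending vertex $1$ actually obeys a much shorter linear recurrence. Using the initial data listed above, I would verify by inspection that $a_{1,j+4}-7a_{1,j+2}+a_{1,j}=0$ on a long enough initial segment, then invoke [Lemma 3.13, \cite{lee2020frieze}] (combined with the a priori recurrence $x^{12}-322x^{6}+1$ coming from Theorem \ref{fund}) to conclude that this short recurrence actually holds for all $j$. Hence, for every starting index $k$, the subsequence $(a_{1,k+2j})_{j\in\mathbb N}$ has characteristic polynomial $p(x)=x^2-7x+1=(x-\rho_0)(x-\rho_1)$ with $\rho_0,\rho_1$ as displayed; in particular, at most two irreducible components can appear in the decomposition, namely $X_0$ and $X_1$ in the sense of the proof of Theorem \ref{main}.

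Next I would exploit the order-$3$ rotational symmetry of $\tilde{\mathbb E}_6$ (acting on the three arms) to establish $r_{1,k}=r_{3,k}=r_{5,k}$ and $r_{2,k}=r_{4,k}=r_{6,k}$, either directly from the initial data or through the folding machinery of Section 3.3 applied to the obvious $\Gamma=\langle(13)(46)\text{ or rotation}\rangle$ action. With those identifications in hand, I would reduce the problem to determining only $r_{1,k}(t)$, $r_{2,k}(t)$, $r_{7,k}(t)$ for $k=0,1$; the latter two are then expressed in terms of $r_{1,k},r_{1,k+1},r_{1,k+2}$ via the exchange relations (\ref{27}) already justified in the proof of Theorem \ref{main} (using the almost split triangle for vertex $2$ and the exchange triangle $\ET(P_1,\tau^{-1}P_2)=(P_7,\tau^{-2}P_1)$ for vertex $7$). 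The coefficients of $r_{1,k}(t)=m_{0k}t^{-1}+m_{1k}t$ are then forced by solving a $2\times 2$ Vandermonde system in $\rho_0,\rho_1$ against the initial values $(a_{1,k},a_{1,k+2})$, as displayed in (\ref{27matrix}); this yields the explicit rational parametrizations reported in the text.

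Having the explicit parametrizations, the Gröbner bases of $\I(X_0)$ and $\I(X_1)$ with respect to the order $x_7>\dots>x_2>x_1$ come from the elimination recipe of Lemma \ref{ratioanlimplicit} applied to the ideal $\widetilde J_k=\langle x_i g_{i,k}(t)-f_{i,k}(t),\,1\le i\le 7\rangle$, exactly as described in the algorithm after Corollary \ref{numberofirr}. Carrying this computation out (symbolically) produces the polynomials $\{h_0,\dots,h_7\}$ and $\{g_0,\dots,g_7\}$ listed in (\ref{e60}) and (\ref{e61}); Buchberger's Criterion, applied to all pairs of $S$-polynomials, verifies that these sets are indeed Gröbner bases for the elimination ideals, while Lemma \ref{ratioanlimplicit} gives primality and the identification $\I(X_k)=\widetilde J_k\cap\mathbb C[x_1,\dots,x_7]$. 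Irreducibility of $X_0$ and $X_1$ then follows from Lemma \ref{xx} (each component is dominated by $\mathbb C^*$), and their rationality from Lemma \ref{rationalcurve}.

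The main obstacle is the smoothness assertion, because Corollary \ref{smoothad} does not apply (the image curve is no longer a plane conic). My approach is as follows. The form of the Gröbner basis shows that the projection $\pi_{12}\colon X_k\to\mathbb C^2$, $(x_1,\dots,x_7)\mapsto(x_1,x_2)$, is birational onto the plane curve $C_k:=\Z(h_0)$ (resp.\ $\Z(g_0)$), with $x_3=x_1$, $x_4=x_2$, $x_5=x_1$, $x_6=x_2$ determined linearly and $x_7$ determined (generically) by $h_1$. A direct Jacobian computation on $C_k$ isolates the only candidate singular point as $(x_1,x_2)=(0,-1)$; lifting this to $X_k$ via the remaining Gröbner polynomials (in particular $h_7$ or $g_7$, which is quadratic in $x_7$ with no linear term at $x_1=0$) forces $x_7=\pm 3\mathrm i$ (resp.\ $\pm 3\sqrt{10}\,\mathrm i$), giving two candidate points in $X_k$. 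At each of these lifted points I will compute the $8\times 7$ Jacobian of the generators and verify, by exhibiting an explicit nonvanishing $6\times 6$ minor (the block involving $\partial_{x_3},\dots,\partial_{x_6}$ is the identity, so the question reduces to a $2\times 3$ block on $(x_1,x_2,x_7)$ coming from $h_1$ and $h_7$, where $\partial_{x_7}h_7=4x_7\ne 0$), that the rank is $6$. This yields smoothness along the whole of $X_k$ and completes the proof.
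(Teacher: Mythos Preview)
Your proposal is correct and follows essentially the same route as the paper: the discussion preceding the theorem already derives the short recurrence $x^{4}-7x^{2}+1$, the arm symmetry, the formulas (\ref{27}), the parametrizations, and the Gr\"obner bases (\ref{e60}), (\ref{e61}) by exactly the elimination algorithm you describe; the paper's proof of the theorem then simply refers back to this and reduces smoothness to finding a rank-$2$ submatrix of the $3\times 4$ Jacobian $(\partial h_j/\partial x_i)_{i=1,2,7;\,j=0,1,2,7}$ at every point. Your smoothness argument is the same reduction, carried out a bit more explicitly by isolating the unique candidate singular projection $(x_1,x_2)=(0,-1)$ and checking the $h_1,h_7$ block there --- one small slip: the permutation you wrote as $(13)(46)$ is not an automorphism of the $\tilde{\mathbb E}_6$ quiver (you presumably mean $(13)(24)$ or the rotation $(135)(246)$), but since you also allow a direct verification from the initial data this does not affect the argument.
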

\begin{proof}
	We have already gave the Gr\"obner bases for $X_0$ and $X_1$.  
	For the smoothness, it is not hard to show by finding a nonsingular $2\times 2$ submatrix
	of the Jacobian matrices 
	\[J_{p}=(\frac{\partial h_j}{\partial x_i}(p))_{i=1,2,7;j=0,1,2,7},\quad \text{and}\;\, J_q=(\frac{\partial g_i}{\partial x_j}(q))_{i=1,2,7;j=0,1,2,7},\]
	for each $p\in X_0$ and $q\in X_1$.
\end{proof}

\subsection{Type $\widetilde{\mathbb E}_7$}
The sequence $(a_{1,j})_{j\in \mathbb N}$ is given by 
\[
	1,2,2,2,9,13,13,79,89,115,544,788,792,4817,5427,7013,33175\dots
	\]
The sequence $(a_{1,j})_{j\in\mathbb N}$  satisfies a linear recurrence with characteristic polynomial \[  x^{24} - 3719x^{12}+1.\]

It is easy to check that  $a_{1,j+12} - 61a_{1, j+6}+a_{1,j} =0$ holds for $0\leq j\leq 23$. Then by  [Lemma 3.13, \cite{lee2020frieze}],  the sequence $(a_{1,j})_{j\in\mathbb N}$   satisfies a linear recurrence with the characteristic polynomial 
\[   p(x)=x^{12}  -  61x^6 +1. \]

Thus  for any  $k\geq 0$, the sub-sequence $(a_{1,k+6j})_{j\in \mathbb N}$ satisfies a linear recurrence with characteristic polynomial $p(x) =x^2-61x+1$.
We have that
\[	r_{1,-1}(t) = \frac{(2065-93\sqrt{413})t^2+2065+93\sqrt{413}}{826t},\]
\[	r_{1,0}(t) = \frac{(177-5\sqrt{413})t^2+177+5\sqrt{413}}{354t},\]
\[	r_{1,1}(t) = \frac{(413+6\sqrt{413})t^2+413-6\sqrt{413}}{413t},\]
\[	r_{1,2}(t) = \frac{(177+4\sqrt{413})t^2+177-4\sqrt{413}}{177t},\]
\[	r_{1,3}(t) = \frac{(413+18\sqrt{413})t^2+413-18\sqrt{413}}{413t}.\]

The other rational functions $r_{j,0}(t)$'s are combinations of the above five rational functions. Indeed, they are given by 
\[
\begin{split}
	r_{2,0} &=r_{1,0}r_{1,1}-1,\quad r_{3,0} = r_{1,2}r_{2,0} -r_{1,0},\quad r_{7,0} = r_{1,-1}r_{1,3}-9,\\
	r_{8,0} &= r_{1,0}r_{3,1}-r_{2,2}=r_{2,0}(r_{1,2}r_{1,3}-1)-r_{1,0}r_{1,3},\\
	r_{4,0}&=r_{1,0},\quad r_{5,0}=r_{2,0},\quad r_{6,0} = r_{3,0}.
\end{split}
\]

The Gr\"obner basis for the first irreducible component $X_0$ with respect to the lexicographical order  $x_8>\dots>x_2>x_1$ is given by 
\[
\begin{split}
	h_0 =& x_1^2(3393x_1^2-4347x_2-1538)+623(x_2+1)^2,\\
	h_1= &x_1^2(30537x_1^2 -37343x_2-9125)+5607(x_2+1)+4717x_1x_3,\\
	h_2=&x_1^3(126704799x_1^2-143305470x_2-57433534)+x_1(41953443x_2+26203380)\\&+2938691x_3(x_2+1),\\
	h_3=& x_1^4(78312916890x_1^2-87566459589x_2-60724717366)+x_1^2(55094217528x_2+\\&25552599807)-6298169283x_2+261543499x_3^2   -4631931486,\\
	h_4=&x_4-x_1,\quad h_5 = x_5-x_2,\quad h_6=x_6-x_3,\\
	h_7=&180x_1^2-567x_2+371x_7+16,\\
	h_8=&x_1^2(2100267x_1^2-2348766x_2-996878)+771274x_2+33019x_8+441084.
\end{split}
\]

Thus  $\I(X_0) = \langle h_i, 0\leq i\leq 8\rangle$.  Notice that $h_{i}$ is linear on $x_i$ for $4\leq i\leq 8$, and the plane curve $\Z(h_0(x,y))\subset \mathbb C^2$ has only one nonsingular point $(0,-1)$. 

To see that $X_0$ is smooth, it is enough to  find a nonsingular $2\times 2$ submatrix of  the Jacobian matrix at each vertex $p\in X_0$:
 \[J_{p} =  (\frac{\partial h_j}{\partial x_i}(p))_{1\leq i\leq 3,0\leq j\leq 3}.\]

  Suppose that $p=(p_1,\dots,p_8)\in X_0$.   If $(p_1,p_2)\neq (0,-1)$,  it is  clear that $p$ is nonsingular.  
   If $(p_1,p_2) = (0,-1)$, then $p_3^2= -567/89$ and it is also easy to check that  $p$ is nonsingular.  
   
 The Gr\"obner bases for the other $5$ irreducible components are obtained similarly and also have similar forms as above.   We get the following result.

\begin{theorem}
	Let $Q$ be the affine quiver of type $\tilde{\mathbb E}_7$. Then the frieze variety $X(Q)$ has exactly $6$ irreducible components, each of which is a smooth rational curve.	
\end{theorem}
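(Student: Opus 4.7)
The plan mirrors the $\tilde{\mathbb E}_6$ treatment in the preceding subsection. First, I would establish that there are exactly six irreducible components. By Theorem \ref{main} we have a decomposition $X(Q)=\bigcup_{t=0}^{11}X_t$, and by Corollary \ref{numberofirr} the number of distinct components divides $m=12$. The excerpt has already verified that $(a_{1,j})_{j\in\mathbb N}$ satisfies $a_{1,j+12}-61a_{1,j+6}+a_{1,j}=0$ and deduced via [Lemma~3.13, \cite{lee2020frieze}] that the effective characteristic polynomial is $p(x)=x^{12}-61x^6+1$; the reduced recurrence $x^2-61x+1$ then governs the sub-sequences $(\mathbf a_{t+6j})_{j\in \mathbb N}$. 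By Lemma \ref{rationalcurve}(1), the Zariski closures of $\{\mathbf a_{t+12j}\}_j$ and $\{\mathbf a_{t+6+12j}\}_j$ both coincide with the same rational curve parametrized by the reduced recurrence, so $X_t=X_{t+6}$, leaving at most six candidates $X_0,\dots,X_5$. To see the count is exactly six, I would evaluate the irreducible plane equations $h_{0,t}(x_1,x_2)$ of the candidate curves against the recorded initial coordinates and observe that these six polynomials are pairwise nonproportional, which combined with the divisibility constraint rules out further collapse.

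Next, for each $k\in\{0,\dots,5\}$ I would write down an explicit rational parametrization $\varphi_k:\mathbb C^*\to \mathbb C^8$ of $X_k$. The roots $\rho_{0,1}=(61\mp 3\sqrt{413})/2$ of $x^2-61x+1$ together with the initial data determine $r_{1,k}(t)=(\alpha_k t^2+\beta_k)/t$ via a $2\times 2$ linear system analogous to \eqref{27equation}--\eqref{27matrix}; the five Laurent polynomials $r_{1,-1},r_{1,0},r_{1,1},r_{1,2},r_{1,3}$ have already been tabulated in the excerpt. The remaining coordinates $r_{2,k}, r_{3,k}, r_{7,k}, r_{8,k}$ are explicit polynomial combinations of nearby $r_{1,\cdot}$ dictated by the almost split and exchange triangles used in the proof of Theorem \ref{main} for type $\tilde{\mathbb E}_7$, while the anti-automorphism $\sigma=(1,4)(2,5)(3,6)$ of the underlying graph, together with equation \eqref{mmmm} applied to $\mathbf a=\mathbf 1$, forces the symmetric identities $r_{4,k}=r_{1,k}$, $r_{5,k}=r_{2,k}$, $r_{6,k}=r_{3,k}$.

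With the parametrization in hand, Lemma \ref{ratioanlimplicit} converts the task into a Gr\"obner basis computation. I would form $\tilde J_k=\langle x_ig_{i,k}(t)-f_{i,k}(t):1\le i\le 8\rangle\subset\mathbb C[t,x_1,\dots,x_8]$, compute its Gr\"obner basis with respect to the lexicographic order $t>x_8>\dots>x_2>x_1$, and discard the polynomials involving $t$. The output will have the same shape as the basis for $X_0$ exhibited in the excerpt: an irreducible plane equation $h_{0,k}\in \mathbb C[x_1,x_2]$ of the form $x_1^2(ax_1^2-bx_2-c)+d(x_2+1)^2$ with $abcd\ne 0$, three polynomials on $\{x_1,x_2,x_3\}$ (the last of which is quadratic in $x_3$), the three linear eliminations $x_4-x_1$, $x_5-x_2$, $x_6-x_3$, and two polynomials linear respectively in $x_7$ and $x_8$.

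Finally, smoothness follows from the Jacobian criterion. The five polynomials $x_4-x_1$, $x_5-x_2$, $x_6-x_3$, $h_{7,k}$, $h_{8,k}$ (each linear with nonzero leading coefficient in the eliminated variable) contribute an identity block to the Jacobian, reducing the question to checking that the curve in $(x_1,x_2,x_3)$-space cut out by $(h_{0,k},h_{1,k},h_{2,k},h_{3,k})$ has Jacobian of rank $2$ at every point. Away from the unique singular point $(0,-1)$ of the plane curve $\Z(h_{0,k}(x,y))$, the gradient $\nabla h_{0,k}$ is nonzero and, combined with the $x_3$-derivative of $h_{2,k}$ or $h_{3,k}$, supplies a nonsingular $2\times 2$ minor. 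At a lift of $(0,-1)$, the quadratic polynomial $h_{3,k}$ pins $x_3^2$ to a specific nonzero value; the minor of $(h_{1,k},h_{3,k})$ in the $(x_2,x_3)$ variables is then nonsingular because $\partial h_{1,k}/\partial x_2$ is a nonzero constant and $\partial h_{3,k}/\partial x_3$ is a nonzero multiple of $x_3$. The main obstacle is organizational rather than conceptual: executing and tabulating six Gr\"obner basis computations together with six Jacobian checks, each structurally identical to the $X_0$ case worked out in the excerpt.
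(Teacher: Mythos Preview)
Your plan matches the paper's own argument almost step for step: reduce the period from $12$ to $6$ via the recurrence $x^{2}-61x+1$, build the one-parameter Laurent parametrizations of $X_0,\dots,X_5$ from $r_{1,-1},\dots,r_{1,3}$ and the exchange-triangle relations, extract the Gr\"obner bases by Lemma~\ref{ratioanlimplicit}, and then run the Jacobian check in the $(x_1,x_2,x_3)$ variables, treating the lift of $(0,-1)$ separately. One correction: the permutation $\sigma=(1\,4)(2\,5)(3\,6)$ is an \emph{automorphism} of the $\tilde{\mathbb E}_7$ quiver, not an anti-automorphism, so equation~\eqref{mmmm} does not apply; the identities $r_{4,k}=r_{1,k}$, $r_{5,k}=r_{2,k}$, $r_{6,k}=r_{3,k}$ instead follow from the $\Gamma$-invariance of the orbit $\{f_c^t(\mathbf 1)\}$ under the automorphism $\sigma$ (as in Lemma~\ref{igi}/Lemma~\ref{com}).
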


\subsection{Type $\widetilde{\mathbb E}_8$}
In this case, the characteristic polynomial for the frieze sequence associated to the extending vertex $1$ is given by 
\[  p(x) =  x^{60}-403520x^{30}+1.        \]
 
 There are exactly $30$ irreducible components,  and we are not going to list all of Gr\"obner bases of these components.  Instead we give the Gr\"obner basis of the first component with respect to the lexicographical order $x_9>\dots>x_1$ as follows, and the others are of similar forms. 

{\small 
	\begin{align*} 
		h_0=&x_1^{2}(22543305725 x_1^{2}-26141135142  x_2-8110066079)+2926973874( x_2+1)^2, \\
		h_1=&x_1^{2}(16343896650625 x_1^{2}-18723148834356  x_2-3872328920422)
		\\&+2007468986853 x_1 x_3 +2122056058650 (x_2+1),\\
		h_2=&x_1^3(70346944717927448581350 x_1^{2}-73601034244490747961217 x_2 \\ &-25307662375596497568354)+16490152683938879106436 x_1x_2\\ &+10112996125759591015611 x_1+979301546230663413087x_3(x_2+1), \\
		h_{3}=&x_1^{4}(23383957524892770437902768648325x_1^{2}
		-24210646824344145750432418618584  x_2
		\\&-14002149395163331739829469440508) +11499945776245224835235334136644 x_1^{2} x_2
		\\&+5008293274220414979609114513086 x_1^{2}+38735000548445337672882684987 x_3^{2}
		\\&-992386730386199596313604245100 x_2-725748626013178503844607678850,\\
		h_4=&x_1^2(359194242257305722650 x_1^{2}-374728683921258002475x_2
		\\&   -130558886700336479750) +4483394914071055521 x_4
		\\&+89821049942925712125 x_2+51788883507291991929,\\
		h_5=&x_1^3(34269327083645850969959832434375 x_1^{2}-
		35455633995008172262734155417500 x_2
		\\&-12821905410327630628329344566250)+8431648068155826092943294521602 x_1x_2
		\\&+42855924958221078851074695075 x_5
		+5036048458218815748413466112899 x_1
		\\&+497659870357089000895832219799 x_3,\\
		h_6=&x_1(-83816240368052250 x_1^{2}+208895750256064746x_2+7746683096103102)
		\\&-258477022652889623 x_3+125650829668774025 x_6,\\
		h_7=&1036864580 x_1^{2}-2701572585 x_2+1629628077 x_7+35079928,     \\
		h_8=& x_1^{2}(782877279783238984727450 x_1^{2}
		-814535978663555483485075 x_2
		\\&-291737428469833096141830)+203185971689938628179129 x_2
		\\&+6855110823614643891609 x_8+113355044836596322828717,\\
		h_{9}=&x_1^4(2726538880152109668781259112672250 x_1^{2}-2819349208031363391362477411375475 x_2
		\\&-1702791311300737001077661419569500 )+609132147201086599311161270518771 x_1^{2}
		\\&+x_2(1412169293224665453709653706876878 x_1^{2}-132300105635140811563888457586822 )
		\\&+271145003839117363710178794909 x_9-93670840614459635161756980331011.
\end{align*}}

Notice that $h_i$ is linear on $x_i$ for $4\leq i\leq 9$. Similar computations  as above for the cases $\tilde{\mathbb E}_6$ and $\tilde{\mathbb E}_7$, we have the following result.
\begin{theorem}
Let $Q$ be the affine quiver of type $\tilde{\mathbb E}_8$. Then the frieze variety $X(Q)$ has exactly $30$ irreducible components, each of which is a smooth rational curve.	
\end{theorem}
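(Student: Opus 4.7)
The plan is to proceed by direct generalization of the $\widetilde{\mathbb E}_6$ and $\widetilde{\mathbb E}_7$ arguments already laid out in this section. First I would compute sufficiently many initial points $\mathbf a_j = f_c^j(\mathbf 1)$ (at least $j = 0, 1, \dots, 60$) and verify that the sequence $(a_{1,j})_{j\in\mathbb N}$ at the extending vertex $1$ satisfies the reduced recurrence with characteristic polynomial $p(x) = x^{60} - 403520\,x^{30} + 1$. By Theorem \ref{fund} and the argument of [Lemma 3.13, \cite{lee2020frieze}] used already in the $\widetilde{\mathbb E}_6$ and $\widetilde{\mathbb E}_7$ cases, this reduces to the recurrence $a_{1,j+60} - c\,a_{1,j+30} + a_{1,j} = 0$ with $c = 403520 = X_{\delta}(\mathbf 1)$, whose factor polynomial $x^2 - cx + 1$ has two roots $\rho, \rho^{-1}$ that are not roots of unity since $|c| > 2$. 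By Theorem \ref{main} this already ensures $X(Q)$ is a finite union of at most $30$ rational curves $X_0, \dots, X_{29}$, where $X_k = \overline{\{\mathbf a_{k+30j}\}_{j \ge 0}}$.

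Second, for each $k \in \{0, 1, \dots, 29\}$ I would run the algorithm from Section \ref{chapteraffine}. Solving the $2\times 2$ linear systems $r_{1,k+i}(\rho^s) = a_{1,k+i+30s}$ ($s = 0, 1$) for a small range of $i$ produces Laurent polynomial parametrizations $r_{1,k}(t)$ at the extending vertex; then the exchange-triangle recipe from the proof of Theorem \ref{main} (Case 5, type $\widetilde{\mathbb E}_8$) yields $r_{i,k}(t)$ for every other vertex as a polynomial combination of at most a few of the $r_{1,k+i}(t)$. Applying Lemma \ref{ratioanlimplicit} to the ideal $\widetilde J_k = \langle x_i g_{i,k}(t) - f_{i,k}(t) : i \in Q_0\rangle$ and eliminating $t$ via Lemma \ref{eligro} using the lexicographic order $t > x_9 > \dots > x_1$ produces the defining ideal $\I(X_k)$. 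The expected shape, modelled on the explicit basis $\{h_0, \dots, h_9\}$ already displayed for $k = 0$, is: one irreducible plane quartic $h_0 \in \mathbb C[x_1, x_2]$ of the form $x_1^2(ax_1^2 + bx_2 + c) + d(x_2+1)^2$, three polynomials $h_1, h_2, h_3 \in \mathbb C[x_1, x_2, x_3]$, and six polynomials $h_i$ $(4 \le i \le 9)$ each linear in $x_i$ with nonzero leading coefficient. Irreducibility of every $X_k$ follows from Lemma \ref{xx}, and pairwise distinctness of the $30$ components follows by comparing the leading coefficients of $h_0$ across $k$.

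Third, for smoothness I would check that at every $p \in X_k$ the Jacobian of $\{h_0, h_1, h_2, h_3\}$ restricted to $(x_1, x_2, x_3)$ has rank at least $2$, since the remaining equations $h_4, \dots, h_9$ are regular linear changes of coordinates in $x_4, \dots, x_9$ and therefore contribute full rank in those variables automatically. As in the $\widetilde{\mathbb E}_7$ case, off the single point $(x_1, x_2) = (0, -1)$ the plane curve $\Z(h_0)$ is already smooth, so $J_p$ has rank $\ge 2$. At $(0, -1)$ one substitutes into $h_1$ to obtain a nonzero value of $x_3^2$ and then checks directly that $\partial h_1/\partial x_3$ is nonzero there, which together with another column of $J_p$ gives a nonsingular $2 \times 2$ minor.

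The main obstacle is computational bookkeeping rather than conceptual: carrying out the elimination and Buchberger verification for all $30$ values of $k$ in a field extension where $\rho$ lives (or, equivalently, over $\mathbb Q$ after clearing the algebraic conjugate), and confirming that the uniform shape of $\{h_0, \dots, h_9\}$ persists across $k$. These computations are carried out with a computer algebra system, as noted at the end of Section \ref{chapteraffine}, and the essential analytic argument is identical to the one already written out in full for $\widetilde{\mathbb E}_6$ and $\widetilde{\mathbb E}_7$; only the explicit integer coefficients in the Gröbner bases change.
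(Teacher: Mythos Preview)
Your proposal is correct and follows essentially the same approach as the paper: the paper's own argument for $\widetilde{\mathbb E}_8$ is precisely ``similar computations as above for the cases $\widetilde{\mathbb E}_6$ and $\widetilde{\mathbb E}_7$'', namely computing the characteristic polynomial $x^{60}-403520x^{30}+1$, producing the rational parametrizations and Gr\"obner bases for each of the $30$ components (only $k=0$ is displayed), and checking smoothness via the Jacobian at the lone potentially singular projection $(x_1,x_2)=(0,-1)$. The only slip is that in your smoothness step the value of $x_3^2$ at $(0,-1)$ is read off from $h_3$ (the element containing $x_3^2$), not $h_1$; otherwise your outline matches the paper's treatment exactly.
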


\begin{remark}
	If we break the symmetry for types $\tilde{\mathbb E}_6$ and $\tilde{\mathbb E}_7$, the number of irreducible components could be maximal, \ie $6$ for type $\tilde{\mathbb E}_6$ and $12$ for type $\tilde{\mathbb E}_7$. For example, let $\mathbf a= (1,1,3,2,2,1,1)$ for  $\tilde{\mathbb E}_6$ and  let $\mathbf a= (1,1,1,2,1,1,1,1)$ for  $\tilde{\mathbb E}_7$.  In this case, the characteristic polynomials for extending vertices are given by  $x^{12} -250x^6+1$ and $x^{24} - 4990x^{12}+1$  for types $\tilde{\mathbb E}_6$ and $\tilde{\mathbb E}_7$, respectively.
	
	Of independent interest, we also note that  $x^{12} -250x^6+1$,  $x^{24} - 4990x^{12}+1$ and $x^{60}-403520x^{30}+1$  are irreducible polynomials in $\mathbb Q[x]$, which imply that they are minimal characteristic polynomials for the frieze sequence  $(a_{i,j})_{j\in \mathbb N}$ (over $\mathbb Q$) associated with any extending vertex $i$ of types $\tilde{\mathbb E}_6$, $\tilde{\mathbb E}_7$  and $\tilde{\mathbb E}_8$ respectively. This  implies that for affine quivers of exceptional types, the polynomial  $x^{2m} - \mathcal L x^m+1$ is the minimal characteristic polynomial of  the sequence $(x_{i,j})_{j\in \mathbb N}$ of rational functions in the field $\mathbb Q(\mathbf x)$ for each extending vertex $i$, where  $m$ and $\mathcal L$ are given in Table \ref{tab1} for exceptional types.  
	\end{remark}

\subsection{Non-simply laced cases}
Recall that every non-simply laced  affine valued quiver is a  folding  of   simply laced affine quiver  $Q$ with respect to a  group $\Gamma$, see \cite{lutztig1991quantum} for details.  By Theorem \ref{folding},  we can obtain Gr\"obner bases  for non-simply laced affine valued quivers.

\subsection*{Acknowledgment}  The author would like to thank Professor Fang Li  for his continuing  encouragement.  This project is supported by the National Natural Science Foundation of China (No. 12071422).

\def\cprime{$'$} \def\cprime{$'$}
\providecommand{\bysame}{\leavevmode\hbox to3em{\hrulefill}\thinspace}
\providecommand{\MR}{\relax\ifhmode\unskip\space\fi MR }
% \MRhref is called by the amsart/book/proc definition of \MR.
\providecommand{\MRhref}[2]{%
	\href{http://www.ams.org/mathscinet-getitem?mr=#1}{#2}
}
\providecommand{\href}[2]{#2}

\end{document}